\newtheorem{them}{Theorem}[section]
\newtheorem{prop}[them]{\noindent Proposition}
\newtheorem{lemma}[them]{\noindent Lemma}
\newtheorem{corollary}[them]{\noindent Corollary}
\theoremstyle{definition}
\newtheorem{definition}[them]{\noindent Definition}
\begin{document}

\title[Network right $*$-abundant semigroups]{Network right $*$-abundant semigroups}
\thanks{ }
\author{Yanhui Wang}
\address{College of Mathematics and Systems Science\\ Shandong University of Science and Technology\\ Qingdao 266590\\ China}
\email{yanhuiwang@sdust.edu.cn}
\author{Pei Gao}
\address{College of Mathematics and Systems Science\\ Shandong University of Science and Technology\\ Qingdao 266590\\ China}
\email{gaopei0403@163.com}
\author{Xueming Ren}
\address{Department of Mathematics\\ Xi'an University of Architecture and Technology\\ Xi'an 710055\\ China}
\email{xmren@xauat.edu.cn}
\noindent\keywords{Network right $*$-abundant semigroups, Networks,  Graph inverse semigroups}
\maketitle

\renewcommand{\thefootnote}{\empty}

\vspace{3mm}
\begin{abstract}
We introduce the class of network right $*$-abundant semigroups. These are based on networks that extend the notion of a directed graph. This class properly contains  the class of graph inverse semigroups. We investigate the structure of network right $*$-abundant semigroups. We show that two network right $*$-abundant semigroups are isomorphic if and only if the underlying networks are isomorphic.
\end{abstract}

\section{Introduction}\label{Intro}
 In this article,  a {\it network} $\Gamma = (V, T)$ consists of a set $V$ of vertices and a set $T$ of relations.  Each relation in $T$ consists of an ordered pair of disjoint non-empty subsets of $V$. If each $t \in T$ is an ordered pair of distinct singleton subsets of $V$, then we identify $\Gamma$ with the underlying simple directed graph and refer it simply as a {\it graph}.   There exist two maps $\mathbf{s}$ and $\mathbf{r}$ from $T$ to the power set ${\mathcal{P}}(V)$ of $V$. For all $t \in T$, $\mathbf{s}(t)$ and $\mathbf{r}(t)$ are called the {\it source} and {\it range} of $t$, respectively. Networks under consideration in this paper will have finitely many or countably infinitely many vertices and  relations.

In a graph $\Gamma = (V, T)$, a {\it path} is a finite sequence $p = t_1t_2\cdots t_n$ of relations $t_i \in T$ with $\mathbf{r}(t_{i}) =\mathbf{s}(t_{i+1})$ for $i = 1, \ldots, n-1$. In the 1970s, Ash and Hall~\cite{2} constructed inverse semigroups using paths in a graph, called {\it graph inverse semigroups}. They showed that every partial order can be realized as that of non-zero ${\mathcal{J}}$-classes of an inverse semigroup. Graph inverse semigroups not only generalize polycyclic monoids~\cite{19} but also arise in the study of rings and $C^*$-algebras~\cite{21}. In particular,  \cite{3,4} introduced the Cohn path $K$-algebra of $\Gamma$ and  Leavitt path $K$-algebra of $\Gamma$ where $K$ is a field and $\Gamma$ is a graph. There exists a kind of paths  $p = t_1t_2\cdots t_n$ of relations $t_i \in T$ with either $\mathbf{r}(t_{i}) = \mathbf{s}(t_{i+1})$  or $\mathbf{r}(t_{i}) \neq \mathbf{s}(t_{i+1})$  but $\mathbf{r}(t_{i}) \cap \mathbf{s}(t_{i+1}) \neq \emptyset$ for $i = 1, \ldots, n-1$ in a network.  Naturally,  one may ask what kind of algebraic systems such paths may lead to? In this paper we construct from a network a right $*$-abundant semigroup with zero, called a {\it network right $*$-abundant semigroup}, as a quotient of a semigroup of paths and their inverse. If our network is a graph, then our network right $*$-abundant semigroup is the corresponding graph inverse semigroup.

 The structure of this paper is as follows. In Section~\ref{sec:Pre} we begin with some basic definitions and results concerning  semigroups, in particular, right abundant semigroups. We also give the definition and properties of paths in a   network and homomorphisms of  networks. Section~\ref{sec:NRAS}  constructs a right $*$-abundant semigroup $Q_{\Gamma}$ using paths in a network $\Gamma$, named {\it network right $*$-abundant semigroups}.  Dually, we can get a network left abundant semigroup. We build a right ample  subsemigroup $S_{\Gamma}$ of $Q_\Gamma$, and we also  show that the inverse semigroup $R_{\Gamma}$ generated by linear paths is a fundamental inverse subsemigroup of $S_{\Gamma}$.  Section~\ref{sec:congruence} gives a proper ideal $I$ of $Q_{\Gamma}$ (resp. $S_{\Gamma}$) and also $\rho_I = (I \times I) \cup 1_{Q_{\Gamma}}$ is an idempotent-separating congruence on $Q_{\Gamma}$.  A sufficient condition is given under which $Q_\Gamma$ is not $*$-congruence-free as a unary semigroup. In addition, we also find a sufficient condition under which $S_{\Gamma}$ is not $*$-congruence-free as a unary semigroup and  $R_{\Gamma}$ is not  congruence-free. Section~\ref{sec:hom} describes properties of idempotents of $Q_{\Gamma}$ in terms of the natural partial order in a semigroup.   We also  show that two  networks are isomorphic if and only if their corresponding network right $*$-abundant  semigroups are isomorphic. Section~\ref{sec:ex} gives an example to explain that $Q_{\Gamma}$ contains properly $S_{\Gamma}$ and $R_{\Gamma}$.

\section{Preliminaries}\label{sec:Pre}
To make this article self-contained  we recall some basic definitions and results concerning  right abundant semigroups and networks. For more details, we refer the reader to ~\cite{Bower} and \cite{F, Fountain, Howie}.

\subsection{(Right) abundant semigroups}
 We begin with relations ${\mathcal{L}}^{*}$ and ${\mathcal{R}}^{*}$.

Let $S$ be a semigroup. For any $a, b \in S$,
$$a\, {\mathcal{L}}^*\, b \Leftrightarrow \forall x, y \in S^{1} \big(ax = ay \Leftrightarrow bx = by\big)$$
and
$$a\, {\mathcal{R}}^*\, b \Leftrightarrow \forall x, y \in S^{1} \big(xa = ya \Leftrightarrow xb = yb\big).$$
Clearly, ${\mathcal{L}}^*$ is a right congruence and ${\mathcal{R}}^*$ is a left congruence.

\begin{lemma}\label{L*Def}
 If $e$ is an idempotent of a semigroup $S$, then the following statements are equivalent for $a\in{S}$:
\begin{enumerate}
\item[\rm (i)] $(e,a)\in{\mathcal{L^*}}$;
\item[\rm (ii)] $ae=a$ and for all $x,y\in{S^{1}}$, $ax=ay$ implies $ex=ey$.
\end{enumerate}
\end{lemma}

The dual to Lemma~\ref{L*Def} holds for the relation ${\mathcal{R}}^*$.

We pause to mention that ${\mathcal{L}}\subseteq {\mathcal{L}}^*$  and ${\mathcal{R}}\subseteq{\mathcal{R}}^*$ on any semigroup $S$. For any regular elements $a,b\in{S}$, $(a,b)\in{\mathcal{L}}^*$ if and only if $(a,b)\in{\mathcal{L}}$ and $(a,b)\in{\mathcal{R}}^*$ if and only if $(a,b)\in{\mathcal{R}}$. In particular, if $S$ is a regular semigroup, then ${\mathcal{L}}^*={\mathcal{L}}$ and ${\mathcal{R}}^*={\mathcal{R}}$.

A semigroup is  said to be a {\it right abundant} semigroup if each ${\mathcal{L}}^*$-class contains an idempotent.

\begin{definition}
A right abundant semigroup $S$ is said to be a {\it right $*$-abundant} semigroup if each ${\mathcal{L}}^*$-class of $S$ contains a unique idempotent.
\end{definition}

Let $S$ be a right $*$-abundant semigroup with set of idempotents $E(S)$. We denote the unique idempotent of $E(S)$ in the ${\mathcal{L}}^*$-class of $a$ by $a^*$. Then $^*$ is a unary operation on $S$ and we may regard $S$ as an algebra of type $(2, 1)$ and call such algebras {\it unary algebras}; as such, morphisms must preserve the unary operation of $^*$ (and hence the relation ${\mathcal{L}}^*$). We may refer to such morphisms as `$(2, 1)$-morphisms' if there is danger of ambiguity. Of course, any semigroup isomorphism must preserve the additional operations.

\begin{definition}
 A {\it right ample} (formally {\it right type $A$}) semigroup $S$ is defined to be a right abundant semigroup  in which the idempotents commute and for all $a \in S$, $e \in E(S)$, $ea=a(ea)^*$.
\end{definition}

From the commutativity of idempotents it is clear that any ${\mathcal{L}}^*$-class of a right ample semigroup $S$ contains at most one idempotent of $E(S)$. So a right ample semigroup is a right $*$-abundant semigroup. Note that as unary semigroups, the class of right ample semigroups is a quasi-variety of right $*$-abundant semigroups~\cite{Fountain1}.

 Dually, a {\it left ample} (formally {\it left type $A$}) semigroup is defined. An {\it ample} (formally {\it type A}) semigroup is defined to be a left and right ample semigroup. In particular, an inverse semigroup is ample, where $a^{\dagger}= aa^{-1}$ and $a^* = a^{-1}a$, where $a^{\dagger}$ is the unique idempotent in the ${\mathcal{R}}^{*}$-class containing $a$ and  $a^{-1}$ is the inverse of $a$.  Ample semigroups are usually thought of as the appropriate abundant analogue of inverse semigroups.

\begin{lemma}\label{inverse}
If $S$ is a right ample semigroup where $E(S)$ is its semilattice of idempotents, and $H$ is the set of all regular elements of $S$, then $H$ is an inverse subsemigroup of $S$.
\end{lemma}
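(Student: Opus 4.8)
The plan is to reduce everything to the classical characterization that a semigroup is inverse precisely when it is regular and its idempotents commute. Since by definition every element of $H$ is regular and $E(S)$ is given to be a semilattice, the only substantive thing to check is that $H$ is closed under multiplication; once that is in place, regularity and commutativity of idempotents are inherited from $S$, and the characterization delivers the conclusion. Notably, for this lemma the only feature of $S$ I expect to use is that $E(S)$ is a semilattice, so the full strength of the right ample hypothesis (the identity $ea = a(ea)^{*}$ and the ${\mathcal{L}}^{*}$-structure) will not be needed.

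First I would prove closure. Given $a, b \in H$, choose inverses $a'$ and $b'$, so that $aa'a = a$, $a'aa' = a'$, and likewise $bb'b = b$, $b'bb' = b'$; in particular $aa', a'a, bb', b'b \in E(S)$. I would then verify directly that $b'a'$ is an inverse of $ab$. Computing $(ab)(b'a')(ab) = a(bb')(a'a)b$ and using that the idempotents $bb'$ and $a'a$ commute, this becomes $a(a'a)(bb')b = (aa'a)(bb'b) = ab$; the symmetric computation gives $(b'a')(ab)(b'a') = b'(bb')(a'a)a' = (b'bb')(a'aa') = b'a'$. Hence $ab$ is regular, so $ab \in H$ and $H$ is a subsemigroup of $S$.

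Finally I would assemble the pieces. The element $a'$ produced above is again regular (it has inverse $a$), so inverses of elements of $H$ stay in $H$, and every element of $H$ is regular by construction; thus $H$ is a regular semigroup. Moreover every idempotent is regular, whence $E(H) = E(S) \cap H = E(S)$, so the idempotents of $H$ commute. Applying the characterization of inverse semigroups as regular semigroups with commuting idempotents, $H$ is an inverse semigroup, and since it is a subsemigroup of $S$ it is an inverse subsemigroup. The main obstacle — really the only delicate point — is the closure computation, where one must interleave associativity with the single application of idempotent commutativity correctly; the remaining steps are routine consequences of $E(S)$ being a semilattice.
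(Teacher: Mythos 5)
Your proposal is correct, and it takes a genuinely different route from the paper. The paper works through the $*$-structure: it uses that $a\,{\mathcal{L}}^*\,a^*$ and $b\,{\mathcal{L}}^*\,b^*$ collapse to ${\mathcal{L}}$ on regular elements, that ${\mathcal{L}}$ is a right congruence, and the right ample identity $ea=a(ea)^*$ to conclude that $ab\,{\mathcal{L}}\,(ab)^*$, i.e.\ $ab$ is ${\mathcal{L}}$-related to an idempotent and hence regular. You instead run the classical Vagner-style computation: with $a'$, $b'$ inverses of $a$, $b$, the idempotents $bb'$ and $a'a$ commute because $E(S)$ is a semilattice, so $(ab)(b'a')(ab)=a(bb')(a'a)b=(aa'a)(bb'b)=ab$ and symmetrically $(b'a')(ab)(b'a')=b'a'$, exhibiting $b'a'$ as an explicit inverse of $ab$. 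Your argument is more elementary and strictly more general --- it shows that in \emph{any} semigroup whose idempotents commute the regular elements form an inverse subsemigroup, making no use of right abundance or of the identity $ea=a(ea)^*$ --- and it has the small added benefit of producing the inverse of $ab$ inside $H$ explicitly rather than inferring regularity from an ${\mathcal{L}}$-relation to an idempotent. The paper's proof, by contrast, stays entirely within the $(2,1)$-algebra language of the rest of the article, which is presumably why the authors chose it. Both are complete; the one step you elide (the intermediate grouping $b'(a'a)(bb')a'$ before commuting, in the second displayed computation) is harmless.
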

\begin{proof}
 Clearly $E(S) \subset H$. Let $a, b \in H$. Recall that $a\, {\mathcal{L}}^*\,  a^*$ and $b\, {\mathcal{L}}^*\, b^*$ in $S$. Since $a, b, a^*$ and $b^*$ are regular elements in $S$, then $a\, {\mathcal{L}}\, a^*$, $b\, {\mathcal{L}}\, b^*$ so that as ${\mathcal{L}}^*$ is a right congruence we have  $ab\, {\mathcal{L}}\, a^*b$, which implies that $(ab)^*= (a^*b)^*$. As $S$ is right ample, then $a^*b = b(a^*b)^*$ and then $(ab)^* = (a^*b)^*=b^*(ab)^*= b^*(a^*b)^*$. Therefore, $ab\, {\mathcal{L}}\, a^*b = b(a^*b)^*\, {\mathcal{L}}\, b^*(a^*b)^* = (ab)^*$, that is, $ab$ is $\mathcal{L}$-related to an idempotent, so that $ab$ is a regular element. Hence $H$ is a subsemigroup of $S$ and so the result holds.
\end{proof}

\begin{lemma}\label{aa-1RaLa-1a}\cite{Howie}
Let $S$ be a  semigroup with set of idempotents $E(S)$, and let $a$ be a regular element in $S$ where $b$ is an inverse of $a$. Then $ab, ba \in E(S)$ and $ab\, {\mathcal{R}}\, a \, {\mathcal{L}}\, ba$. Moveover, if $E(S)$ is a semilattice and $a$ is a regular element in $S$ then the inverse of $a$ is unique.
\end{lemma}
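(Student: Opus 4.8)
The plan is to derive everything from the defining equations of an inverse. Recall that $b$ being an inverse of $a$ means precisely $aba=a$ and $bab=b$. The idempotency assertions are then purely formal: $(ab)(ab)=(aba)b=ab$ and $(ba)(ba)=(bab)a=ba$, so $ab,ba\in E(S)$. I would record these two computations first, since the rest of the argument reuses the two defining identities repeatedly.

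For the Green's relation statements I would simply exhibit the relevant principal one-sided ideals and check mutual containment. Since $ab=a\cdot b\in aS^1$ and $a=aba=(ab)a\in abS^1$, we get $aS^1=abS^1$, that is, $ab\,{\mathcal{R}}\,a$. Dually, $ba=b\cdot a\in S^1a$ and $a=aba=a(ba)\in S^1ba$ give $S^1a=S^1ba$, that is, $a\,{\mathcal{L}}\,ba$. These first two parts use only that $a$ is regular; the semilattice hypothesis plays no role yet.

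The substance of the lemma is the uniqueness clause, and this is where I expect the only real difficulty. Assume $E(S)$ is a semilattice and let $b$ and $c$ both be inverses of $a$; by the first part $ab,ba,ac,ca$ are all idempotents, so any two of them commute. The key manoeuvre is to substitute $a=aca$ (or $a=aba$) into the identity $b=bab$ and then regroup the resulting word so that two of these idempotents stand next to each other, at which point commutativity lets me swap them. Carrying this out two different ways, I expect to reach
\[
b=bab=b(aca)b=b(ac)(ab)=b(ab)(ac)=(bab)(ac)=bac
\]
and, symmetrically,
\[
c=cac=c(aba)c=(ca)(ba)c=(ba)(ca)c=(ba)(cac)=bac,
\]
whence $b=bac=c$. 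The main obstacle is purely combinatorial: choosing the insertion and the bracketing so that the commuting pair of idempotents is exactly the one that, after the swap, collapses the word (via $bab=b$ or $cac=c$) down to the common value $bac$. Once the correct regrouping is found, the semilattice hypothesis does all the work. The final clause then follows at once, since regularity of $a$ supplies at least one inverse while the computation above shows there can be at most one.
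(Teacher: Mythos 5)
Your proposal is correct. The paper itself offers no proof of this lemma---it is quoted with a citation to Howie---and your argument is precisely the standard textbook one: idempotency of $ab$ and $ba$ from $aba=a$, the $\mathcal{R}$- and $\mathcal{L}$-relations via mutual containment of the principal one-sided ideals $aS^1=abS^1$ and $S^1a=S^1ba$, and uniqueness by the two symmetric computations $b=bab=b(ac)(ab)=b(ab)(ac)=bac$ and $c=cac=(ba)(ca)c=bac$, which use commutativity of the idempotents exactly where it is needed. Nothing is missing; the only minor point worth making explicit is the standard fact that regularity ($a=axa$) yields an inverse, namely $xax$, so that ``at least one'' inverse indeed exists.
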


\subsection{Generation and presentation}

We now  recall the notion of a semigroup generated by a non-empty set $X$. The free monoid $X^*$ on $X$ consists of all words over $X$ with operation of juxtaposition. We use $\varepsilon$ to denote the empty word. The free semigroup $X^+$ on $X$ is $X^*\backslash\{\varepsilon\}$. A non-empty word is denoted by $x_1x_2\cdots x_n$ where $x_i \in X$ for $1 \leq i \leq n$. For any two words $\alpha = x_1x_2\cdots x_n$, $\beta =y_1y_2 \cdots y_m$ of $X^+$, we use $\alpha\beta$ to denote the juxtaposition of $\alpha$ and $\beta$, that is $\alpha\beta= x_1x_2\cdots x_ny_1y_2 \cdots y_m$. If $\alpha = \beta\mu$ for any  words $\alpha, \beta, \mu \in X^*$, $\beta$ is called a {\it prefix} of $\alpha$, and a {\it proper prefix} if $\mu$ is not the empty word $\varepsilon$.  For any two non-empty words $\alpha, \beta$ in $X^+$, we say that $\alpha, \beta$ are {\it prefix comparable} if one of $\alpha, \beta$ is a prefix of the other.

   Let $R$ be a binary relation on $X^+$. The quotient semigroup $X^+/R^{\sharp}$ , where $R^{\sharp}$ is the smallest congruence on $X^+$ containing $R$, is said to be {\it the semigroup generated by $X$ subjects to relations $R$}. We cite the formal equality $u = v$ to mean that $(u, v) \in R$.  We denote the $R^{\sharp}$-class of $x\in X^{+}$ in $X^+/R^{\sharp}$ by $[x]$ so the reader should bear in mind the context in each case.

We end the section with definition of reduction systems and their properties. More details are referred to \cite{Book} and \cite{YangG}.

Let $A$ be a set and $\rightarrow$ a binary relation on $A$. We call the structure $(A, \rightarrow)$ a {\it reduction system} and the relation $\rightarrow$ a {\it reduction relation}.  The reflexive, transitive closure of $\rightarrow$ is denoted by $\overset{\ast}{\rightarrow}$, while $\overset{\ast}{\leftrightarrow}$ denotes the smallest equivalence relation on $A$ that contains $\rightarrow$. We denote the equivalence class of an element $x \in A$ by $[x]$. An element $x \in A$ is said to be {\it irreducible} or {\it reduced} if there is no $y \in A$ such that $x \rightarrow y$; otherwise, $x$ is {\it reducible}. For any $x, y \in A$, if $x \overset{\ast}{\rightarrow} y$ and $y$ is irreducible, then $y$ is a {\it normal form} of $x$. A reduction system $(A, \rightarrow)$ is {\it noetherian} if there is no infinite sequence $x_0, x_1, \ldots \in A$ such that for all $i \geq 0$, $x_i \rightarrow x_{i+1}$.

We say that a reduction system $(A, \rightarrow)$ is {\it confluent} if whenever $w, x, y \in A$ are such that $w \overset{\ast}{\rightarrow} x$ and $w \overset{\ast}{\rightarrow} y$, then there is a $z \in A$ such that $x \overset{\ast}{\rightarrow} z$ and $y \overset{\ast}{\rightarrow} z$; and $(A, \rightarrow)$ is {\it locally confluent} if whenever $w, x, y \in A$, are such that $w \rightarrow x$ and $w \rightarrow y$, then there is a $z \in A$ such that $x \overset{\ast}{\rightarrow} z$ and $y \overset{\ast}{\rightarrow} z$.

\begin{lemma}\cite{Book}\label{rwriting}
Let $(A, \rightarrow)$ be a reduction system.

{\rm (i)} If $(A, \rightarrow)$ is noetherian and confluent, then for each $x \in A$, $[x]$ contains a unique normal form.

{\rm (ii)} If $(A, \rightarrow)$ is noetherian, then it is confluent if and only if it is locally confluent.
\end{lemma}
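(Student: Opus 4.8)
The plan is to treat the two parts separately, with part~(ii) (Newman's Lemma) being the substantial one. For part~(i), existence of a normal form is immediate from the noetherian hypothesis: starting from $x$ and repeatedly applying $\rightarrow$ produces a sequence that cannot be infinite, so it terminates at an irreducible element, which lies in $[x]$ because each reduction step is contained in $\overset{\ast}{\leftrightarrow}$. For uniqueness I would first extract from confluence the Church--Rosser property, namely that $a \overset{\ast}{\leftrightarrow} b$ implies the existence of $z$ with $a \overset{\ast}{\rightarrow} z$ and $b \overset{\ast}{\rightarrow} z$. This is proved by induction on the number of ``zigzags'' in a derivation witnessing $a \overset{\ast}{\leftrightarrow} b$, using confluence at each peak to merge two forward reductions into a common descendant and then splicing the pieces together. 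Granting this, if $y_1, y_2 \in [x]$ are both irreducible then $y_1 \overset{\ast}{\leftrightarrow} y_2$, so they possess a common descendant $z$; irreducibility forces $y_1 = z = y_2$.

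For part~(ii) one direction is trivial: confluence specialises to local confluence by taking the two reductions $w \rightarrow x$ and $w \rightarrow y$ to be single steps. The content is the converse, where the noetherian hypothesis is essential. I would argue by noetherian induction on $w$, establishing the statement $P(w)$: ``every pair of reductions $w \overset{\ast}{\rightarrow} x$ and $w \overset{\ast}{\rightarrow} y$ has a common descendant''. If $w = x$ or $w = y$ the claim is trivial, so assume $w \rightarrow x_1 \overset{\ast}{\rightarrow} x$ and $w \rightarrow y_1 \overset{\ast}{\rightarrow} y$ with genuine first steps. Local confluence applied to $w \rightarrow x_1$ and $w \rightarrow y_1$ yields $u$ with $x_1 \overset{\ast}{\rightarrow} u$ and $y_1 \overset{\ast}{\rightarrow} u$. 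Since $w \rightarrow x_1$, the hypothesis $P(x_1)$ applies to $x_1 \overset{\ast}{\rightarrow} x$ and $x_1 \overset{\ast}{\rightarrow} u$, producing $v$ with $x \overset{\ast}{\rightarrow} v$ and $u \overset{\ast}{\rightarrow} v$. Then $y_1 \overset{\ast}{\rightarrow} u \overset{\ast}{\rightarrow} v$, and applying $P(y_1)$ to $y_1 \overset{\ast}{\rightarrow} y$ and $y_1 \overset{\ast}{\rightarrow} v$ produces the desired $z$ with $y \overset{\ast}{\rightarrow} z$ and $v \overset{\ast}{\rightarrow} z$, whence $x \overset{\ast}{\rightarrow} v \overset{\ast}{\rightarrow} z$ as well.

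The main obstacle, and the point demanding care, is the bookkeeping in part~(ii): local confluence closes only the one-step peak $x_1 \leftarrow w \rightarrow y_1$, and turning this into full confluence requires two successive appeals to the induction hypothesis at the strictly smaller elements $x_1$ and $y_1$. The legitimacy of these appeals rests entirely on the noetherian condition, which guarantees that $\rightarrow$ is a well-founded relation and hence that induction over it is valid; without it the diagram chase collapses, and indeed local confluence need not imply confluence. I would therefore phrase the induction as genuine well-founded (noetherian) induction over $\rightarrow$ rather than as induction on a length parameter, since reductions issuing from $w$ need not have uniformly bounded length even when the system is noetherian.
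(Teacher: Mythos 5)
Your argument is correct: the existence of normal forms from termination, uniqueness via the Church--Rosser property extracted from confluence by induction on the zigzags of a conversion, and Newman's Lemma by well-founded induction over $\rightarrow$ with two nested appeals to the induction hypothesis at $x_1$ and $y_1$ are all sound, and you rightly identify that the induction must be noetherian induction rather than induction on a length parameter. The paper itself offers no proof of this lemma --- it is quoted directly from Book and Otto --- and your argument is essentially the standard one given in that reference, so there is nothing to reconcile.
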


Let $S$ be a semigroup with presentation $\langle X: u_i = v_i, i \in I\rangle$, where $u_i, v_i \in X^+$. We can form a reduction system $(X^+, \rightarrow)$ where
\[u \rightarrow v \Leftrightarrow (u = xu_iy, v = xv_iy\ \mbox{for\ some}\ x, y \in X^{\ast}, i \in I).\]
It is clear that $\overset{\ast}{\leftrightarrow}$ is the congruence generated by $R = \{(u_i, v_i): i \in I\}$. Thus if $\rightarrow$ is a confluent noetherian rewriting system then every element of $S$ has a unique normal form as a word in $X^+$.

\subsection{Networks}\label{subsec:HOLN}
 In this subsection we give some basic definitions and results of  networks. For further details, of both background and
technicalities, we refer the reader to \cite{HN, Bower} and \cite{Wang}.

\begin{definition}
A network   $\Gamma = (V, T, \mathbf{s}, \mathbf{r})$ consists of the set of vertices $V$, the  set of relations $T$, together with mappings $\mathbf{s}, \mathbf{r} : T \rightarrow {\mathcal{P}}(V)$, respectively called the {\it source mapping} and the {\it range mapping} for $\Gamma$, where ${\mathcal{P}}(V)$ is the power set of $V$ and for all $t \in T$, $\mathbf{s}(t)$ and $\mathbf{r}(t)$ are disjoint non-empty subsets of $V$.
\end{definition}

As we stated in Introduction if for all $t \in T$ $\mathbf{s}(t)$ and $\mathbf{r}(t)$ are singletons in a network $\Gamma = (V, T, \mathbf{s}, \mathbf{r})$ we identify $\Gamma$ with the underlying simple direct graph and refer to it simply as a {\it graph}.

 Let $\Gamma$ be a   network with set of vertices $V$ and set of relations $T$. Put
 $$T^0=\{A \subseteq V:\exists\ t\in T, A=\mathbf{s}(t)\ \mbox{or}\ A=\mathbf{r}(t)\} \cup V,$$
and for all $A \in T^0$, we set $\mathbf{s}(A) = A=\mathbf{r}(A)$.

  Here we should remark that if $\Gamma$ is a graph,
 $$T^0= V,$$
 as the source and target of a relation are singleton sets, respectively.

 \begin{definition}\label{path}
 A {\it path} in a  network $\Gamma = (V, T, \mathbf{s}, \mathbf{r})$ is a finite sequence $\alpha = t_1t_2\cdots{t_n}$ of elements of $T \cup T^0$ such that $\mathbf{r}(t_i)\cap{\mathbf{s}(t_{i+1})}\ne\emptyset$ for $i=1,2,\cdots,n-1$. In such a case, $\mathbf{s}({\alpha}) = \mathbf{s}(t_1)$ is the {\it source} of $\alpha$, $\mathbf{r}(\alpha) = \mathbf{r}(t_{n})$ is the {\it range} of $\alpha$.
 \end{definition}

An element in $T^0$ is said to be an {\it empty path}. Let $P(\Gamma)$ denote the set of all paths in a network $\Gamma$. This includes the zero-element $0$ and $T^0$. That is,
 $$P({\Gamma}) = \{t_1t_2\cdots{t_n}: t_i \in T\ \cup\ T^0, \mathbf{r}(t_i)\ \cap\ \mathbf{s}(t_{i+1}) \ne \emptyset\ \mbox{for}\ i=1, 2,\cdots, n-1 \}\ \cup\  \{0\}.$$

 \begin{definition}\label{linePath}
If $\mathbf{r}(t_i)=\mathbf{s}(t_{i+1})$ for $i=1,2,\cdots,n-1$ in $\alpha=t_1t_2\cdots{t_n}\in{P(\Gamma)}$, then $\alpha$ is said to be a
 {\it linear path}.
 \end{definition}

  Let $LP(\Gamma)$ denote the set of all linear paths in $\Gamma$. This includes  $T^0$ but does not contain the zero element $0$. That is,
  $$LP(\Gamma) = \{t_1t_2\cdots{t_n}\in P(\Gamma)\setminus\{0\}: \mathbf{r}(t_i)=\mathbf{s}(t_{i+1}) \ \mbox{for} \ i=1,2,\cdots,n-1 \}.$$

It is a good place to remark that a non-zero path in a graph must be a linear path as the source and target of a relation are singleton sets, respectively. Hence  we have $P(\Gamma) = LP(\Gamma) \cup \{0\}$ in a graph $\Gamma$.

For any two sequences $\alpha=t_1t_2\cdots{t_n}$ and $\beta=y_1y_2\cdots{y_m}$ of elements of $T\ \cup\ T^0$ we denote the sequence $t_1t_2\cdots{t_n}y_1y_2\cdots{y_m}$ by $\alpha\beta$; then $\alpha\beta$ is a path if and only if $\mathbf{r}(\alpha)\cap{\mathbf{s}(\beta)}\neq \emptyset$, that is, $\mathbf{r}(t_n)\cap{\mathbf{s}(y_1)}\neq \emptyset$, and $\alpha$ and $\beta$ are paths.  The sequence $\alpha\beta$ is a line path if and only if $\alpha$, $\beta$ are line paths and $\mathbf{r}(\alpha) = {\mathbf{s}(\beta)}$. So in general, the juxtaposition of two line paths is possible a path but not a line path. Moreover, we have the following result.

\begin{lemma}\label{SubsemigroupLP}
The set $P(\Gamma)$ together with the operation $\circ$ that for all $\alpha, \beta \in P(\Gamma)$,
$$ \alpha \circ \beta  =
	\begin{cases}
		\alpha\beta & \mbox{if}\ \mathbf{r}(\alpha) \cap \mathbf{s}(\beta) \neq \emptyset\\
        0 & \mbox{otherwise},
	\end{cases}$$
forms a semigroup with zero.
\end{lemma}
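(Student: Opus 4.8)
The plan is to verify the three axioms defining a semigroup with zero: closure of $\circ$ on $P(\Gamma)$, absorption by $0$, and associativity. The first two are quick, and the real content lies in associativity, where the only genuine difficulty is bookkeeping the partial nature of concatenation, namely the cases in which a product collapses to $0$.

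First I would record the elementary but decisive observation that underlies everything. If $\alpha = t_1 \cdots t_n$ and $\beta = y_1 \cdots y_m$ are paths with $\mathbf{r}(\alpha) \cap \mathbf{s}(\beta) \neq \emptyset$, then the juxtaposition $\alpha\beta = t_1 \cdots t_n y_1 \cdots y_m$ is again a path, and moreover $\mathbf{s}(\alpha\beta) = \mathbf{s}(\alpha)$ and $\mathbf{r}(\alpha\beta) = \mathbf{r}(\beta)$. Indeed, the internal adjacency conditions for $\alpha\beta$ split into those inherited from $\alpha$, those inherited from $\beta$, and the single new junction condition $\mathbf{r}(t_n) \cap \mathbf{s}(y_1) \neq \emptyset$, which is exactly the hypothesis $\mathbf{r}(\alpha) \cap \mathbf{s}(\beta) \neq \emptyset$; and by Definition~\ref{path} the source (resp.\ range) of a sequence is read off its first (resp.\ last) letter. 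This at once yields closure, since $\alpha \circ \beta$ is either $0 \in P(\Gamma)$ or the path $\alpha\beta \in P(\Gamma)$. Absorption by $0$ is built into the definition: $\mathbf{s}(0)$ and $\mathbf{r}(0)$ carry no vertices, so $\mathbf{r}(\alpha) \cap \mathbf{s}(0) = \emptyset = \mathbf{r}(0) \cap \mathbf{s}(\alpha)$ for every $\alpha$, whence $\alpha \circ 0 = 0 = 0 \circ \alpha$.

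For associativity I would fix $\alpha, \beta, \gamma \in P(\Gamma)$ and argue by cases on the two interface intersections $\mathbf{r}(\alpha) \cap \mathbf{s}(\beta)$ and $\mathbf{r}(\beta) \cap \mathbf{s}(\gamma)$. If both are non-empty, then both $(\alpha \circ \beta) \circ \gamma$ and $\alpha \circ (\beta \circ \gamma)$ reduce to the common juxtaposition $\alpha\beta\gamma$: by the observation above, $\mathbf{r}(\alpha\beta) = \mathbf{r}(\beta)$ keeps the outer product defined on the left, while $\mathbf{s}(\beta\gamma) = \mathbf{s}(\beta)$ keeps it defined on the right, and the resulting letter-sequence is identical. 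If $\mathbf{r}(\alpha) \cap \mathbf{s}(\beta) = \emptyset$, I would show both sides equal $0$: the left side is $0 \circ \gamma = 0$ immediately, and for the right side the key point is that forming $\beta \circ \gamma$ never alters the source, i.e.\ $\mathbf{s}(\beta \circ \gamma) = \mathbf{s}(\beta)$ whenever $\beta \circ \gamma \neq 0$, so $\mathbf{r}(\alpha) \cap \mathbf{s}(\beta \circ \gamma) = \mathbf{r}(\alpha) \cap \mathbf{s}(\beta) = \emptyset$ and $\alpha \circ (\beta \circ \gamma) = 0$ as well. The case $\mathbf{r}(\beta) \cap \mathbf{s}(\gamma) = \emptyset$ is symmetric, now using $\mathbf{r}(\alpha \circ \beta) = \mathbf{r}(\beta)$.

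The main obstacle, though a mild one, is precisely this interaction between partial-definedness and associativity: one must be certain that completing a product on one side can neither secretly revive nor kill the opposite junction. This is exactly what the source/range locality $\mathbf{s}(\alpha\beta) = \mathbf{s}(\alpha)$ and $\mathbf{r}(\alpha\beta) = \mathbf{r}(\beta)$ guarantees, so once that observation is in hand the case analysis is routine. I would also remember to let the empty paths of $T^0$ participate as ordinary letters at this stage; they are not yet being treated as local identities, so no separate argument is needed for them beyond the general one.
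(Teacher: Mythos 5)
Your proposal is correct; the paper states this lemma without proof (the closure observation $\mathbf{r}(\alpha)\cap\mathbf{s}(\beta)\neq\emptyset \Leftrightarrow \alpha\beta$ is a path already appears in the paragraph preceding the lemma), so your argument simply supplies the routine verification the authors omit. The key point you isolate — that $\mathbf{s}(\alpha\beta)=\mathbf{s}(\alpha)$ and $\mathbf{r}(\alpha\beta)=\mathbf{r}(\beta)$, so forming a product on one side neither creates nor destroys the junction on the other — is exactly what makes the case analysis for associativity go through, and your handling of $0$ and of the empty paths in $T^0$ as ordinary letters is consistent with the paper's conventions.
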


  According to the definition of linear paths, it is easy to get the following Lemma so we omit its proof.

\begin{lemma}\label{LP_Property}
 For any $\alpha,\beta\in{LP(\Gamma)}$, we have
\begin{enumerate}
\item[\rm (i)] if $\alpha=t_1t_2\cdots{t_n}\in{LP(\Gamma)}$ then $t_it_{i+1}\cdots{t_j}\in{LP(\Gamma)}$ for $1\leq{i} < j\leq{n}$;
\item[\rm (ii)] if $\mathbf{r}(\alpha)=\mathbf{s}(\beta)$ then $\alpha\beta\in{LP(\Gamma)}$.
\end{enumerate}
 \end{lemma}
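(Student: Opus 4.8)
The plan is to prove both assertions by directly unwinding Definition~\ref{linePath}, since a linear path is precisely a nonzero path $t_1t_2\cdots t_n \in P(\Gamma)\setminus\{0\}$ satisfying the strengthened adjacency condition $\mathbf{r}(t_i)=\mathbf{s}(t_{i+1})$ for every consecutive pair. In both cases the strategy is the same: exhibit the candidate sequence, verify that each consecutive pair satisfies the \emph{equality} condition required for linearity, and then separately confirm that the sequence is a genuine (nonzero) element of $P(\Gamma)$ by checking the intersection condition of Definition~\ref{path}.

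For part~(i), I would fix $1 \leq i < j \leq n$ and consider the subsequence $\gamma = t_it_{i+1}\cdots t_j$. The linearity equalities $\mathbf{r}(t_k)=\mathbf{s}(t_{k+1})$ that hold for $\alpha$ over all $1 \leq k \leq n-1$ restrict in particular to the indices $i \leq k \leq j-1$, so every consecutive pair of $\gamma$ already satisfies the linearity condition. It then remains only to observe that $\gamma \in P(\Gamma)\setminus\{0\}$: for each such $k$, the sets $\mathbf{r}(t_k)$ and $\mathbf{s}(t_{k+1})$ are nonempty by the definition of a network, and since they are moreover \emph{equal}, their intersection $\mathbf{r}(t_k)\cap\mathbf{s}(t_{k+1})$ is nonempty. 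Hence $\gamma$ meets the defining path condition and is nonzero, so $\gamma \in LP(\Gamma)$.

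For part~(ii), writing $\alpha=t_1\cdots t_n$ and $\beta=y_1\cdots y_m$, I would form $\alpha\beta=t_1\cdots t_ny_1\cdots y_m$ and check linearity pair by pair: the equalities internal to $\alpha$ and internal to $\beta$ are inherited from the hypotheses $\alpha,\beta\in LP(\Gamma)$, while the single junction pair is handled by the hypothesis, since $\mathbf{r}(t_n)=\mathbf{r}(\alpha)=\mathbf{s}(\beta)=\mathbf{s}(y_1)$. As in part~(i), each of these equalities between nonempty sets forces a nonempty intersection, so $\alpha\beta$ is a nonzero path; being linear at every consecutive pair, it lies in $LP(\Gamma)$. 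The only point requiring any care, and the nearest thing to an obstacle, is precisely this bookkeeping step of confirming that the sequence is nonzero rather than merely satisfying the equality condition formally; this is immediate here because sources and ranges are nonempty by definition, so equality of adjacent range and source sets is strictly stronger than the nonempty-intersection requirement of a path.
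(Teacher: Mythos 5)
Your proof is correct: the paper omits the proof of this lemma as an immediate consequence of the definitions, and your argument is exactly the routine definitional verification being alluded to. The one point you flag as needing care --- that equality of the (nonempty) range and source sets implies the nonempty-intersection condition required for membership in $P(\Gamma)\setminus\{0\}$ --- is indeed the only thing to check beyond restricting or concatenating the linearity equalities, and you handle it correctly.
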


\begin{definition}
A {\it homomorphism} $\phi = (\phi_V, \phi_T)$ from $\Gamma =(V_{\Gamma}, T_{\Gamma}, \mathbf{s}, \mathbf{r})$ to $\Delta = (V_{\Delta}, T_{\Delta}, \mathbf{s}, \mathbf{r})$ consists of two maps $\phi_V: V_{\Gamma} \rightarrow V_{\Delta}$ and $\phi_T: T_\Gamma \rightarrow T_{\Delta}$ such that  for all $t \in T_{\Gamma}$  $\mathbf{s}(t)\phi = \{v\phi_V: v \in \mathbf{s}(t)\} = \mathbf{s}(t\phi_T)$ and $\mathbf{r}(t)\phi = \{v\phi_V: v \in \mathbf{r}(t)\} = \mathbf{r}(t\phi_T)$.
\end{definition}

A mapping $\phi = (\phi_V, \phi_T)$ from $\Gamma =(V_{\Gamma}, T_{\Gamma}, \mathbf{s}, \mathbf{r})$ to $\Delta = (V_{\Delta}, T_{\Delta}, \mathbf{s}, \mathbf{r})$, is called {\it bijective} if $\phi_V$ and $\phi_T$ are bijective.  A bijective  homomorphism $\phi$ is an {\it  isomorphism}. Networks $\Gamma$ and $\Delta$ are said to be {\it isomorphic} if there exists an isomorphism from $\Gamma$ to $\Delta$.  It is straightforward to see that the inverse of an isomorphism is also an homomorphism and hence an isomorphism. If two networks  $\Gamma$ and $\Delta$ are  isomorphic we write it as  $\Gamma\cong\Delta$.

 It is a routine to get the following lemma.

\begin{lemma}
If two networks $\Gamma$ and $\Delta$ are isomorphic, then $P(\Gamma)$ is isomorphic to $P(\Delta)$.
\end{lemma}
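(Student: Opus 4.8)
The plan is to extend the given network isomorphism $\phi = (\phi_V, \phi_T)$ to a map on paths, letter by letter, and to verify that this extension is a well-defined semigroup isomorphism $\Phi \colon P(\Gamma) \to P(\Delta)$. The first preparatory step is to record how $\phi$ acts on subsets and on the empty paths. For a subset $A \subseteq V_{\Gamma}$ I write $A\phi = \{v\phi_V : v \in A\}$; since $\phi_V$ is a bijection, so is the induced map on ${\mathcal{P}}(V_{\Gamma})$, and it satisfies $(A \cap B)\phi = A\phi \cap B\phi$ for all $A, B \subseteq V_{\Gamma}$, whence $A \cap B \neq \emptyset$ if and only if $A\phi \cap B\phi \neq \emptyset$. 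I would next check that $\phi$ carries $T^0_{\Gamma}$ into $T^0_{\Delta}$: a vertex is sent to a vertex by $\phi_V$, while if $A = \mathbf{s}(t)$ or $A = \mathbf{r}(t)$ for some $t \in T_{\Gamma}$, then the homomorphism identities $\mathbf{s}(t)\phi = \mathbf{s}(t\phi_T)$ and $\mathbf{r}(t)\phi = \mathbf{r}(t\phi_T)$ show that $A\phi$ is again a source or range set in $\Delta$. Applying the same argument to the inverse isomorphism $\phi^{-1}$ shows that $\phi$ restricts to a bijection $T^0_{\Gamma} \to T^0_{\Delta}$.

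With this in hand I would define a letterwise map on $T_{\Gamma} \cup T^0_{\Gamma}$ by sending $t \mapsto t\phi_T$ for $t \in T_{\Gamma}$ and $A \mapsto A\phi$ for $A \in T^0_{\Gamma}$; since these two domains are disjoint and each piece is a bijection onto the corresponding piece of $T_{\Delta} \cup T^0_{\Delta}$, the combined map is a bijection. The crucial observation is that, on single letters $x$, both the source and range are preserved, that is $\mathbf{s}(x\phi) = \mathbf{s}(x)\phi$ and $\mathbf{r}(x\phi) = \mathbf{r}(x)\phi$ (for empty paths this is immediate from $\mathbf{s}(A) = A = \mathbf{r}(A)$, and for $t \in T_{\Gamma}$ it is the homomorphism property). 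I then set $\Phi(0) = 0$ and $\Phi(t_1 t_2 \cdots t_n) = (t_1\phi)(t_2\phi)\cdots(t_n\phi)$. Because $\phi$ preserves sources, ranges, and non-empty intersections, the defining condition $\mathbf{r}(t_i) \cap \mathbf{s}(t_{i+1}) \neq \emptyset$ holds for $\alpha$ if and only if the analogous condition holds for its image; hence $\Phi$ sends paths to paths and is well defined into $P(\Delta)$.

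It remains to verify that $\Phi$ is a homomorphism for the operation $\circ$ of Lemma~\ref{SubsemigroupLP} and that it is bijective. For the homomorphism property I would split into cases: when $\mathbf{r}(\alpha) \cap \mathbf{s}(\beta) \neq \emptyset$ we have $\alpha \circ \beta = \alpha\beta$, and since $\mathbf{r}(\Phi(\alpha)) \cap \mathbf{s}(\Phi(\beta)) = (\mathbf{r}(\alpha) \cap \mathbf{s}(\beta))\phi \neq \emptyset$ the product $\Phi(\alpha) \circ \Phi(\beta)$ is the concatenation $\Phi(\alpha)\Phi(\beta) = \Phi(\alpha\beta)$; when the intersection is empty, both $\alpha \circ \beta$ and $\Phi(\alpha) \circ \Phi(\beta)$ equal $0$, and the subcases involving $0$ are trivial as $0$ is absorbing. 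Finally, the inverse isomorphism $\phi^{-1}$ induces in exactly the same way a map $\Psi \colon P(\Delta) \to P(\Gamma)$, and because $\Phi$ and $\Psi$ act letter by letter through the mutually inverse bijections $\phi$ and $\phi^{-1}$, we obtain $\Psi\Phi = \mathrm{id}_{P(\Gamma)}$ and $\Phi\Psi = \mathrm{id}_{P(\Delta)}$.

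Thus $\Phi$ is a semigroup isomorphism and $P(\Gamma) \cong P(\Delta)$. The only point requiring genuine care, though it remains routine, is the bookkeeping around the empty paths $T^0$, namely ensuring that $\phi$ restricts to a bijection there and that sources and ranges behave correctly on length-one empty paths; once the intersection-preservation property of the bijection $\phi_V$ is isolated, every remaining step is a direct verification.
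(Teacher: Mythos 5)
Your proposal is correct and is exactly the routine verification the paper has in mind (the paper states the lemma without proof, remarking only that it is routine): extend $\phi$ letterwise to $T\cup T^0$, use injectivity of $\phi_V$ to see that non-emptiness of $\mathbf{r}(t_i)\cap\mathbf{s}(t_{i+1})$ is preserved in both directions, and conclude that $\circ$ is respected with the inverse map induced by $\phi^{-1}$. No gaps.
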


\section{The semigroup $Q_{\Gamma}$}\label{sec:NRAS}
The aim of this section is to construct a right $*$-abundant semigroup using paths in a  network.

Let $\Gamma =(V, T, \mathbf{s}, \mathbf{r})$ be a network. For any $t \in T \cup T^0$, we define $t^{-1}$ to be a relation with
\[\mathbf{s}(t^{-1}) = \mathbf{r}(t)\ \mbox{and}\ \mathbf{r}(t^{-1}) = \mathbf{s}(t).\]
Notice that $A^{-1} = A$ for each $A \in T^0$.  Put
 $$T^{-1}=\{t^{-1}: t \in T\}.$$

\begin{definition}\label{NRAS}
Let $\Gamma =(V, T, \mathbf{s}, \mathbf{r})$ be a network. The semigroup is given by the presentation $Q_{\Gamma}\ : =\langle X: R\rangle$ where
$$X = T\cup  T^0 \cup T^{-1} \cup \{0\}$$
 and $R$ consists of the following relations:
\begin{enumerate}
\item[\rm (NR1)]\ $\mathbf{s}(t)t=t=t\mathbf{r}(t)$ for $t \in T \cup T^0 \cup\ T^{-1}$;
\item[\rm (NR2)]\ $t_1t_2=0$ if $\mathbf{r}(t_1)\cap \mathbf{s}(t_2)=\emptyset$ for $t_1, t_2 \in T  \cup T^0\ \cup T^{-1}$;
\item[\rm (NR3)]\ $t^{-1}_1t_2=0$ if $t_1\neq{t_2}$ for $t_1, t_2 \in T$;
\item[\rm (NR4)]\ $t^{-1}t=\mathbf{r}(t)$ for $t \in T$;
\item[\rm (NR5)]\ $t^{-1}A=0$ if $\mathbf{s}(t) \neq {A}$ for $t \in T$ and $A \in T^0$;
\item[\rm (NR6)]\ $0x = 0 = x0$ for all $x \in X$.
\end{enumerate}
\end{definition}

It is necessary to remark that $Q_{\Gamma}$ defined in Definition~\ref{NRAS} has a zero element $0$ by (NR6). Here  for $A, B \in T^0$ we regard $AB$ as a path from $A$ to $B$ if $A \cap B \neq \emptyset$.

Notice that it follows from Definition~\ref{path} that a non-zero path $\alpha$ in a  network $\Gamma$ is a finite sequence $\alpha = t_1t_2\cdots{t_n}$ of elements of $T \cup T^0$ such that $\mathbf{r}(t_i)\cap{\mathbf{s}(t_{i+1})}\ne\emptyset$ for $i=1,2,\cdots,n-1$. Then $\alpha$ is actually a word in $(T \cup T^0 \cup T^{-1})^+$. For any path $\alpha= t_1t_2\cdots{t_n} \in P(\Gamma)\setminus \{0\}$, we define
$${\alpha}^{-1}={t_n}^{-1}\cdots{t_2}^{-1}{t_1}^{-1}.$$

We now show that if $\Gamma$ is a graph then $Q_{\Gamma}$ is essentially the graph inverse semigroup. If $\Gamma$ is a graph then  for any path $\alpha = t_1\cdots t_n$ we have $\mathbf{r}(t_i) = \mathbf{s}(t_{i+1})$, $i =1, \ldots, n-1$.  Moreover $T^0= V$ and (NR2) turns into $t_1t_2=0$ if $\mathbf{r}(t_1)\neq \mathbf{s}(t_2)$ for $t_1, t_2 \in T \cup T^0 \cup T^{-1}$ as $\mathbf{r}(t_1)$ and $\mathbf{s}(t_2)$ are singletons, respectively. In this case (NR5) follows from (NR2). Further $Q_{\Gamma}$ is indeed a graph inverse semigroup whose non-zero elements can be written uniquely as $\alpha\beta^{-1}$ where $\alpha$ and $\beta$ are linear paths such that $\mathbf{r}(\alpha) = \mathbf{r}(\beta)$ according to (NR1), (NR2), (NR3), (NR4) and (NR6) \cite{21}. Notice that Ash and Hall \cite{3} first introduced graph inverse semigroups consisting of elements in the style $(\alpha, \beta)$ with $\alpha, \beta \in P(\Gamma)$ and $\mathbf{r}(\alpha) = \mathbf{r}(\beta)$. \cite[Proposition 2]{21} shows that graph inverse semigroups constructed in \cite{3} and \cite{21} are isomorphic.  Here we should stress that each element of $Q_{\Gamma}$ is a congruence class so $\alpha\beta^{-1}$ is actually a  representative  of a class. With this in mind, before we show that $Q_{\Gamma}$ with respect to a network $\Gamma$  is a right abundant semigroup we first show that each element of $Q_{\Gamma}$ has a unique normal form $\alpha\beta^{-1}$ in the same way as  that of a graph inverse semigroup. To do this, we first show that the reduction system $(X^+, \rightarrow)$ where $X = T \cup T^0 \cup T^{-1}$,
\[u \rightarrow v \Leftrightarrow (u = xu_iy, v = xv_iy\ \mbox{for\ some}\ x, y \in X^{\ast}, (u_i, v_i) \in R)\]
 is a confluent rewriting system.

\begin{prop}\label{confluent}
The reduction system $(X^+, \rightarrow)$ where $X = T \cup T^0 \cup T^{-1}$, is a confluent rewriting system.
\end{prop}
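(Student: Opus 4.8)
The plan is to apply Lemma~\ref{rwriting}(ii): a noetherian, locally confluent reduction system is confluent, so it suffices to check these two properties for $(X^+,\rightarrow)$ (where, as in the presentation, the absorbing letter $0$ is present in the alphabet). Noetherianity is immediate, since every relation of $R$, oriented from left to right as a rewriting rule, strictly shortens a word: (NR1) and (NR4) replace a factor of length two by one of length one, while (NR2), (NR3), (NR5) and (NR6) collapse a factor of length two to the single letter $0$. As the length of a word is a non-negative integer strictly decreasing along $\rightarrow$, there is no infinite reduction chain.

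First I would set up the critical-pair analysis. Every left-hand side of a rule in $R$ is a word of length exactly two, so distinct rewrites of a word can only clash in two ways. A single two-letter factor $ab$ may match two rules at once; this occurs only among the rules with right-hand side $0$ --- for example $t^{-1}A$ with $\mathbf{s}(t)\cap A=\emptyset$ is covered by both (NR5) and (NR2), and $t_1^{-1}t_2$ with $t_1\neq t_2$ and $\mathbf{s}(t_1)\cap\mathbf{s}(t_2)=\emptyset$ by both (NR3) and (NR2) --- and in each such case both rules rewrite $ab$ to $0$, so there is nothing to resolve. The genuine critical pairs are the three-letter overlaps $abc$ in which $ab$ and $bc$ are simultaneously left-hand sides, and these are the cases I would enumerate according to the type of the shared middle letter.

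Next I would resolve each overlap $abc$. If one of the two competing rules has right-hand side $0$, then one branch creates a factor $0$, which by (NR6) absorbs to $0$; a quick verification of the side conditions shows the competing branch also terminates at $0$ (the relevant emptiness condition is inherited after the other rewrite), so the pair is joinable at $0$. The overlaps with no $0$ involve only (NR1) and (NR4). Overlaps among the (NR1) relations amount to deleting a redundant empty-path letter, and both branches yield the same reduced word. The key cases are the overlaps of (NR4) with (NR1), namely $\mathbf{r}(t)\,t^{-1}\,t$ and $t^{-1}\,t\,\mathbf{r}(t)$: one branch applies (NR4) to $t^{-1}t$ and leaves the factor $\mathbf{r}(t)\mathbf{r}(t)$, which then collapses to $\mathbf{r}(t)$ by (NR1) using $\mathbf{s}(A)=A=\mathbf{r}(A)$ for $A\in T^0$, while the other branch reduces directly to $\mathbf{r}(t)$; the two meet at $\mathbf{r}(t)$.

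The hard part will be bookkeeping rather than ingenuity: because each letter carries source and range data and most rules come with set-theoretic side conditions, I must confirm for every three-letter overlap that the condition enabling one rule is compatible with, and correctly propagated by, the competing rewrite, and I must not overlook the boundary factors $ab$ that are already irreducible (for instance $At$ with $A\cap\mathbf{s}(t)\neq\emptyset$ but $A\neq\mathbf{s}(t)$, and $AB$ with $A\cap B\neq\emptyset$, $A\neq B$), which contribute no critical pair. The subtlest points are the (NR4) overlaps above, where the empty-path idempotence $AA\rightarrow A$ is exactly what is needed to close the diagram. Once every overlap is shown joinable, local confluence holds and Lemma~\ref{rwriting}(ii) gives confluence.
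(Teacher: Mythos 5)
Your strategy is the same as the paper's: Newman's lemma (Lemma~\ref{rwriting}(ii)), noetherianity by length decrease, and a case analysis of the three-letter overlaps; your resolution of the non-zero overlaps via $AA\rightarrow A$ is exactly the paper's Cases~1 and~4. The gap lies in the sentence ``a quick verification of the side conditions shows the competing branch also terminates at $0$.'' There is one zero-producing overlap where this verification is not quick and, with the rules as stated, does not go through. Take $t,q\in T$ with $\mathbf{s}(t)\neq\mathbf{r}(q)$ but $\mathbf{s}(t)\cap\mathbf{r}(q)\neq\emptyset$ (possible in a genuine network, though not in a graph), and consider the word $t^{-1}\,\mathbf{r}(q)\,q^{-1}$. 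The left factor $t^{-1}\mathbf{r}(q)$ is an (NR5)-redex, so one branch reduces to $0\,q^{-1}\rightarrow 0$. The right factor $\mathbf{r}(q)q^{-1}=\mathbf{s}(q^{-1})q^{-1}$ is an (NR1)-redex, so the other branch reduces to $t^{-1}q^{-1}$. But $t^{-1}q^{-1}$ is irreducible: (NR2) requires $\mathbf{s}(t)\cap\mathbf{r}(q)=\emptyset$, which fails by hypothesis; (NR3), (NR4) and (NR5) require the second letter to lie in $T$ or $T^{0}$, whereas $q^{-1}\in T^{-1}$; and (NR1) requires one of the two letters to be an element of $T^{0}$. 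So the two branches end at the distinct irreducible words $0$ and $t^{-1}q^{-1}$, and the critical pair is not joinable by forward rewriting alone.

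This is precisely the subcase $t_3\in T^{-1}$ of Case~5 (dual, (a1)) in the paper's proof, where the diagram is closed by the step $t_1t_3\rightarrow t_1\mathbf{s}(t_3)t_3$ --- an application of (NR1) from right to left, i.e.\ an expansion, which is not a $\rightarrow$-step and therefore does not establish local confluence of $(X^{+},\rightarrow)$. Your proposal inherits exactly this difficulty but hides it behind the phrase ``quick verification''; the claim that ``the relevant emptiness condition is inherited'' is false here, because the enabling condition of (NR5) is $\mathbf{s}(t)\neq A$, not $\mathbf{s}(t)\cap A=\emptyset$, and only the latter would propagate through the competing rewrite. To repair the argument one must either enlarge $R$ (e.g.\ by a rule sending $p^{-1}q^{-1}$ to $0$ whenever $qp$ is a path that is not linear, which is what Theorem~\ref{w4} implicitly assumes about reduced words) or restrict the class of networks so that two distinct sets of $T^{0}$ occurring as a range and a source are disjoint. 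A smaller imprecision: a single two-letter factor can also match two rules with non-zero right-hand sides, namely $AA$ for $A\in T^{0}$ matches (NR1) in both orientations, though both rewrites give $A$, so nothing is lost there.
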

 \begin{proof}
 It is sufficient to show  the one-step case $(t_1t_2)t_3 = t_1(t_2t_3)$ for $t_1, t_2, t_3 \in T \cup T^0 \cup T^{-1}$, that is, consider the situation
\begin{center}
\begin{tikzpicture}[x=0.75pt,y=0.75pt,yscale=-1,xscale=1]

\draw    (144.33,58) -- (164.19,86.36) ;
\draw [shift={(165.33,88)}, rotate = 235.01] [color={rgb, 255:red, 0; green, 0; blue, 0 }  ][line width=0.75]    (10.93,-3.29) .. controls (6.95,-1.4) and (3.31,-0.3) .. (0,0) .. controls (3.31,0.3) and (6.95,1.4) .. (10.93,3.29)   ;
\draw    (129.58,57.75) -- (106.56,87.42) ;
\draw [shift={(105.33,89)}, rotate = 307.81] [color={rgb, 255:red, 0; green, 0; blue, 0 }  ][line width=0.75]    (10.93,-3.29) .. controls (6.95,-1.4) and (3.31,-0.3) .. (0,0) .. controls (3.31,0.3) and (6.95,1.4) .. (10.93,3.29)   ;

\draw (117,38) node [anchor=north west][inner sep=0.75pt]   [align=left] {$t_1t_2t_3$};
\draw (96,91) node [anchor=north west][inner sep=0.75pt]   [align=left] {$ut_3$};
\draw (156,91) node [anchor=north west][inner sep=0.75pt]   [align=left] {$t_1v$};
\end{tikzpicture}
\end{center}
we must show in each case that there exists $w$ with
\begin{center}
\begin{tikzpicture}[x=0.75pt,y=0.75pt,yscale=-1,xscale=1]

\draw    (166.33,108) -- (145.36,143.28) ;
\draw [shift={(144.33,145)}, rotate = 300.74] [color={rgb, 255:red, 0; green, 0; blue, 0 }  ][line width=0.75]    (10.93,-3.29) .. controls (6.95,-1.4) and (3.31,-0.3) .. (0,0) .. controls (3.31,0.3) and (6.95,1.4) .. (10.93,3.29)   ;
\draw    (111.58,108.75) -- (128.43,142.21) ;
\draw [shift={(129.33,144)}, rotate = 243.27] [color={rgb, 255:red, 0; green, 0; blue, 0 }  ][line width=0.75]    (10.93,-3.29) .. controls (6.95,-1.4) and (3.31,-0.3) .. (0,0) .. controls (3.31,0.3) and (6.95,1.4) .. (10.93,3.29)   ;

\draw (130,149) node [anchor=north west][inner sep=0.75pt]   [align=left] {$w$};
\draw (96,91) node [anchor=north west][inner sep=0.75pt]   [align=left] {$ut_3$};
\draw (156,90) node [anchor=north west][inner sep=0.75pt]   [align=left] {$t_1v$};
\end{tikzpicture}
\end{center}

The following discussion is based on $t_1t_2$ and $t_2t_3$ satisfying conditions (NR1)-(NR5), respectively. Notice that it never happens that both $t_1t_2$ and $t_2t_3$ satisfy the relations among  (NR3), (NR4) and (NR5). So there exist seven cases as follows:

\vspace{2mm}
Case 1. $t_1t_2$ satisfies (NR1) and $t_2t_3$ satisfies (NR1). Then there exists four cases as follows:

(a1) When $t_1 = \mathbf{s}(t_2)$ and $t_2 = \mathbf{s}(t_3)$ we get $t_1t_2 \rightarrow t_2$ and $t_2t_3 \rightarrow t_3$, further we have $(t_1t_2)t_3 \rightarrow t_2t_3 \rightarrow t_3$ and $t_1(t_2t_3)\rightarrow t_1t_3 \rightarrow t_3$ since $t_2 = \mathbf{r}(t_3) \in T^0$ and then $t_1 = \mathbf{s}(t_2) = t_2 = \mathbf{s}(t_3)$.

(a2) When $t_1 = \mathbf{s}(t_2)$ and $t_3 = \mathbf{r}(t_2)$ we get $t_1t_2 \rightarrow t_2$ and $t_2t_3 \rightarrow t_2$, further we have $(t_1t_2)t_3 \rightarrow t_2t_3 \rightarrow t_2$ and $t_1(t_2t_3)\rightarrow t_1t_2 \rightarrow t_2$.

(a3) When $t_2 = \mathbf{r}(t_1)$ and $t_2 = \mathbf{s}(t_3)$ we get $t_1t_2 \rightarrow t_1$ and $t_2t_3 \rightarrow t_3$, further we have $(t_1t_2)t_3 \rightarrow t_1t_3$ and $t_1(t_2t_3)\rightarrow t_1t_3$, where $\mathbf{r}(t_1) = t_2 = \mathbf{s}(t_3)$.

(a4) When $t_2 = \mathbf{r}(t_1)$ and $t_3 = \mathbf{r}(t_2)$ we get $t_1t_2 \rightarrow t_1$ and $t_2t_3 \rightarrow t_2$, further we have $(t_1t_2)t_3 \rightarrow t_1t_3 \rightarrow t_1$ since $t_2 = \mathbf{r}(t_1) \in T^0$ and then $t_3 = \mathbf{r}(t_2) =t_2 = \mathbf{r}(t_1)$,  and also we have $t_1(t_2t_3)\rightarrow t_1t_2 \rightarrow t_1$.

\vspace{2mm}
Case 2. $t_1t_2$ satisfies (NR1) and $t_2t_3$ satisfies (NR2). Then there exists two cases as follows:

(a1) When $t_1 = \mathbf{s}(t_2)$  and $\mathbf{r}(t_2) \cap \mathbf{s}(t_3) = \emptyset$ we get $t_1t_2 \rightarrow t_2$ and $t_2t_3 \rightarrow 0$, further we have $(t_1t_2)t_3 \rightarrow t_2t_3 \rightarrow 0$ and $t_1(t_2t_3)\rightarrow t_10 \rightarrow 0$.

(a2) When $t_2 = \mathbf{r}(t_1)$ and $\mathbf{r}(t_2) \cap \mathbf{s}(t_3) = \emptyset$ we get $t_1t_2 \rightarrow t_1$ and $t_2t_3 \rightarrow 0$, further we have $(t_1t_2)t_3 \rightarrow t_1t_3 \rightarrow 0$ since $t_2 = \mathbf{r}(t_1) \in T^0$ and then $\mathbf{r}(t_1) \cap \mathbf{s}(t_3)= \mathbf{r}(t_2) \cap \mathbf{s}(t_3) = \emptyset$,  and also we have  $t_1(t_2t_3)\rightarrow t_10 \rightarrow 0$.

Dually, if $t_1t_2$ satisfies (NR2) and $t_2t_3$ satisfies (NR1) then we still get $(t_1t_2)t_3 \rightarrow 0$ and $t_1(t_2t_3)\rightarrow 0$.

\vspace{2mm}

Case 3. $t_1t_2$ satisfies (NR1) and $t_2t_3$ satisfies (NR3). Then there exists only one situation, that is, $t_1 = \mathbf{s}(t_2)$  and  $t_2 \in T^{-1}$, $t_3 \in T$, $t_2^{-1} \neq  t_3$. Then $t_1t_2 \rightarrow t_2$ and $t_2t_3 \rightarrow 0$, and further we get $(t_1t_2)t_3 \rightarrow t_2t_3 \rightarrow 0$ and $t_1(t_2t_3)\rightarrow t_10 \rightarrow 0$.

Dually, if $t_1t_2$ satisfies (NR3) and $t_2t_3$ satisfies (NR1). Then there exists only one situation, that is, $t_1 \in T^{-1}$, $t_2 \in T$, $t_1^{-1} \neq t_2$ and $t_3 = \mathbf{r}(t_2)$. Then we still get $(t_1t_2)t_3 \rightarrow 0$ and $t_1(t_2t_3) \rightarrow 0$.

\vspace{2mm}
Case 4. $t_1t_2$ satisfies (NR1) and $t_2t_3$ satisfies (NR4). Then there exists only  one situation, that is, $t_1 = \mathbf{s}(t_2)$ and $t_2 = t_3^{-1}$ for $t_3 \in T$. Then $t_1t_2 \rightarrow t_2$ and $t_2t_3 \rightarrow \mathbf{r}(t_3)$, further we get $(t_1t_2)t_3 \rightarrow t_2t_3 \rightarrow \mathbf{r}(t_3)$ and $t_1(t_2t_3)\rightarrow t_1\mathbf{r}(t_3) \rightarrow \mathbf{r}(t_3)$ since $t_2 = t_3^{-1}$ and then $t_1= \mathbf{s}(t_2) = \mathbf{r}(t_3)$.

Dually, if $t_1t_2$ satisfies (NR4) and $t_2t_3$ satisfies (NR1). Then there exists only  one situation, that is, $t_1 \in T^{-1}$, $t_2 \in T$, $t_1^{-1} = t_2$ and $t_3 = \mathbf{r}(t_2)$. Then we get $(t_1t_2)t_3 \rightarrow \mathbf{r}(t_2)$ and
 $t_1(t_2t_3) \rightarrow \mathbf{r}(t_2)$.

\vspace{2mm}
 Case 5. $t_1t_2$ satisfies (NR1) and $t_2t_3$ satisfies (NR5). Then there exists only  one situation, that is, $t_1 = \mathbf{s}(t_2)$, $t_2 \in T^{-1}$, $t_3 = A \in T^0$ and $\mathbf{r}(t_2) \neq t_3$. Then we get $t_1t_2 \rightarrow t_2$ and $t_2t_3 \rightarrow 0$, further we get $(t_1t_2)t_3 \rightarrow t_2t_3 \rightarrow 0$  and $t_1(t_2t_3)\rightarrow t_10 \rightarrow 0$.

Dually, if $t_1t_2$ satisfies (NR5) and $t_2t_3$ satisfies (NR1). Then there exists two cases as follows:

(a1) When $t_1 \in T^{-1}$, $t_2 = A \in T^0$, $\mathbf{r}(t_1) \neq t_2$ and $t_2 = \mathbf{s}(t_3)$.  Then we get $t_1t_2 \rightarrow 0$ and $t_2t_3 = t_3$, further we get $(t_1t_2)t_3 \rightarrow 0t_3 \rightarrow 0$ and $t_1(t_2t_3) \rightarrow t_1t_3$. If $t_3 \in T^0$ then $t_2 = \mathbf{s}(t_3) = t_3$ and so $t_1t_3 \rightarrow 0$ by (NR5); else if $t_3 \in T$  then $t_3^{-1} \neq t_1$ as $\mathbf{r}(t_1) \neq t_2 = \mathbf{s}(t_3)$, and so $t_1t_3 \rightarrow 0$ by (NR3); otherwise $t_3 \in T^{-1}$, $t_1t_3 \rightarrow t_1\mathbf{s}(t_3) t_3 = (t_1\mathbf{s}(t_3)) t_3 \rightarrow 0t_3 = 0$ by (NR5).

(a2) When $t_1 \in T^{-1}$, $t_2 = A \in T^0$, $\mathbf{r}(t_1) \neq t_2$ and $t_3 = \mathbf{r}(t_2)$.  Then $t_2 = t_3 \in T^0$, and so $(t_1t_2)t_3 \rightarrow 0t_3 \rightarrow 0$ and $t_1(t_2t_3) \rightarrow t_1t_2 \rightarrow 0$.

\vspace{2mm}
Case 6. $t_1t_2$ satisfies (NR2) and $t_2t_3$ satisfies (NR2) or (NR3) or (NR5). Then we get $(t_1t_2)t_3 \rightarrow 0t_3 \rightarrow 0$ and $t_1(t_2t_3) \rightarrow t_10 \rightarrow 0$.

Dually, if $t_1t_2$ satisfies (NR2) or (NR3) or (NR5), and $t_2t_3$ satisfies (NR2). Then we get $(t_1t_2)t_3 \rightarrow 0t_3 \rightarrow 0$ and $t_1(t_2t_3) \rightarrow t_10 \rightarrow 0$.

Case 7. $t_1t_2$ satisfies (NR2) and $t_2t_3$ satisfies (NR4). Then we get $(t_1t_2)t_3 \rightarrow 0t_3 \rightarrow 0$ and  $t_1(t_2t_3) \rightarrow t_1\mathbf{r}(t_3) = t_1\mathbf{s}(t_2) \rightarrow 0$.

Dually, if $t_1t_2$ satisfies (NR4) and $t_2t_3$ satisfies (NR2). Then we get $(t_1t_2)t_3 \rightarrow \mathbf{r}(t_2)t_3 \rightarrow 0$ and  $t_1(t_2t_3) \rightarrow t_10 \rightarrow 0$.
\end{proof}

The reader maybe ask why we do not set $AB = A \cap B$. If we did so then for $t \in T$, $A \in T^0$ and ${\mathbf{r}}(t^{-1}) \subsetneq A$, then we have
\[t^{-1}{\mathbf{r}}(t^{-1})A = (t^{-1}{\mathbf{r}}(t^{-1}))A \rightarrow t^{-1}A \rightarrow 0\]
 by (NR1) and (NR5), and also we have
 \[t^{-1}{\mathbf{r}}(t^{-1})A = t^{-1}({\mathbf{r}}(t^{-1})A ) \rightarrow t^{-1}{\mathbf{r}}(t^{-1}) \rightarrow t^{-1}\]
 by ${\mathbf{r}}(t^{-1})A = {\mathbf{r}}(t^{-1}) \cap A = {\mathbf{r}}(t^{-1})$ and (NR1). Hence if we have $AB = A \cap B$ then the relation is not confluent.

 Certainly, the reduction system $(X^+, \rightarrow)$ where $X = T \cup T^0 \cup T^{-1}$ is a noetherian rewriting system. Consequently,  every element of $Q_{\Gamma}$ has a unique normal form as a word in $X^+$ by Lemma~\ref{rwriting}.

Next we describe the unique normal form for every element of $Q_{\Gamma}$.

Notice that if we start with a non-zero path $\alpha$ then any step in the reduction of $\alpha$ only used (NR1) and results in the non-zero path with the same source and range as $\alpha$. So for any non-zero path we can get a unique irreducible path. A path $\alpha = t_1t_2\ldots t_n$ is  {\it reduced} (or {\it irreducible} if
 \[t_{n-1} \neq \mathbf{s}(t_{n}) ,   t_{i-1} \neq \mathbf{s}(t_{i}),  \ \mbox{and}\ t_{i+1} \neq  \mathbf{r}(t_{i})\]
   for $1 < i < n$, where $t_1, t_2, \ldots, t_n \in T \cup T^0$ and $n \in {\mathbb{N}}$.

\begin{lemma}\label{ReducedPath}
For any non-zero path $\alpha \in P(\Gamma)$,  $\mathbf{s}(\alpha) = \mathbf{s}(\alpha')$ and $\mathbf{r}(\alpha) = \mathbf{r}(\alpha')$, where $\alpha'$ is the unique reduced path such that $[\alpha] = [\alpha']$.
\end{lemma}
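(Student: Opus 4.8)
The plan is to reduce the statement to a single reduction step and then induct on the length of a reduction to $\alpha'$. First I would pin down the existence and uniqueness of $\alpha'$. Since $\alpha$ is a non-zero path, its letters lie in $T\cup T^0$ and satisfy $\mathbf{r}(t_i)\cap\mathbf{s}(t_{i+1})\neq\emptyset$, so none of (NR2)--(NR6) can ever fire on $\alpha$: (NR2) is blocked by the path condition (every length-two substring is an adjacent pair $t_it_{i+1}$ with $\mathbf{r}(t_i)\cap\mathbf{s}(t_{i+1})\neq\emptyset$), (NR3)--(NR5) each require a letter from $T^{-1}$, and (NR6) requires the letter $0$. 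Hence every step applicable to $\alpha$, and to any non-zero path it rewrites to, uses only (NR1), and in particular the reduction never leaves the set of non-zero paths. Combined with Proposition~\ref{confluent} and Lemma~\ref{rwriting}(i), this produces a unique reduced path $\alpha'$ with $[\alpha]=[\alpha']$.

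The heart of the argument is the single-step claim: \emph{if $\beta$ is a non-zero path and $\beta\to\gamma$, then $\gamma$ is a non-zero path with $\mathbf{s}(\gamma)=\mathbf{s}(\beta)$ and $\mathbf{r}(\gamma)=\mathbf{r}(\beta)$.} By the previous paragraph the step applies (NR1) to some adjacent pair $t_jt_{j+1}$, so either $t_j=\mathbf{s}(t_{j+1})$ and $t_jt_{j+1}\to t_{j+1}$ (deleting $t_j$), or $t_{j+1}=\mathbf{r}(t_j)$ and $t_jt_{j+1}\to t_j$ (deleting $t_{j+1}$). In each case the deleted letter lies in $T^0$ and coincides with the source (resp. range) of its surviving neighbour, which is exactly what forces source and range to persist.

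I would then separate the interior and boundary subcases. When the deleted letter is interior ($j>1$ in the first case, $j+1<n$ in the second), both $t_1$ and $t_n$ survive, so $\mathbf{s}(\gamma)=\mathbf{s}(t_1)=\mathbf{s}(\beta)$ and $\mathbf{r}(\gamma)=\mathbf{r}(t_n)=\mathbf{r}(\beta)$ at once. The delicate points are the two boundary cases. If $j=1$ and $t_1=\mathbf{s}(t_2)=A\in T^0$ is deleted, the new source is $\mathbf{s}(t_2)=A=\mathbf{s}(A)=\mathbf{s}(t_1)$, using $\mathbf{s}(A)=A=\mathbf{r}(A)$ for $A\in T^0$; dually, if $j+1=n$ and $t_n=\mathbf{r}(t_{n-1})=A\in T^0$ is deleted, the new range is $\mathbf{r}(t_{n-1})=A=\mathbf{r}(A)=\mathbf{r}(t_n)$. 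That $\gamma$ is again a path is automatic, since e.g. in the first case $t_j\in T^0$ gives $\mathbf{r}(t_{j-1})\cap\mathbf{s}(t_{j+1})=\mathbf{r}(t_{j-1})\cap\mathbf{s}(t_j)\neq\emptyset$, and symmetrically in the second. This proves the claim.

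Finally I would finish by induction on the number of steps in a reduction $\alpha=\alpha_0\to\alpha_1\to\cdots\to\alpha_k=\alpha'$, which is finite because the system is noetherian: the single-step claim gives $\mathbf{s}(\alpha_i)=\mathbf{s}(\alpha_{i+1})$ and $\mathbf{r}(\alpha_i)=\mathbf{r}(\alpha_{i+1})$ at every stage, and transitivity yields $\mathbf{s}(\alpha)=\mathbf{s}(\alpha')$ and $\mathbf{r}(\alpha)=\mathbf{r}(\alpha')$. I expect the main (and essentially only) obstacle to be the bookkeeping in the two boundary subcases, where a step deletes the first or last letter; everything else is forced once we observe that only (NR1) can act on a non-zero path.
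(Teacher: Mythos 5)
Your proposal is correct and follows essentially the same route as the paper, which disposes of this lemma with the observation (stated just before the definition of a reduced path) that any reduction step applied to a non-zero path can only use (NR1) and preserves source and range. You have simply filled in the details the paper leaves implicit: ruling out (NR2)--(NR6), checking that an (NR1) step deletes a $T^0$ letter while keeping $\gamma$ a path, handling the boundary cases via $\mathbf{s}(A)=A=\mathbf{r}(A)$, and closing with induction on the (finite, by noetherianity) length of the reduction.
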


 It is a good place to remark that the reduced path of a linear path is still a linear path.  $0$ is reduced.  We put
\[RP(\Gamma) = \{\alpha \in P({\Gamma}): \alpha \ \mbox{is\ reduced}\}\]
and
\[RLP(\Gamma) = \{\alpha \in LP({\Gamma}):  \alpha \ \mbox{is\ reduced}\}.\]
Notice that $T \cup T^0 \in RLP(\Gamma) \subset RP(\Gamma)$. In fact, we have
\[RLP(\Gamma) = \{t_1\cdots t_n: t_1, \ldots, t_n \in T  \ \mbox{and}\ \mathbf{r}(t_i)=\mathbf{s}(t_{i+1})\ \mbox{for}\ i = 1, \ldots, n-1\} \cup T^0.\]

The following lemma is clear so we omit its proof.
\begin{lemma}\label{RLP_Property}
For any $\alpha=t_1t_2\cdots{t_n}\in{RLP(\Gamma)}$  we have $t_it_{i+1}\cdots{t_j}\in{RLP(\Gamma)}$ for $1\leq{i} < j\leq{n}$.
 \end{lemma}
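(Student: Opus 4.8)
The plan is to obtain the statement directly from the explicit description of $RLP(\Gamma)$ recorded just above the lemma, so that the only work is to verify that the two defining features of a reduced linear path---having every letter in $T$ and satisfying the linear condition---are inherited by any contiguous subword. Passing to the characterization rather than unwinding the raw definition of ``reduced'' sidesteps the index bookkeeping in the boundary conditions and makes the argument transparent.

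First I would dispose of the degenerate possibility. The hypothesis $1 \le i < j \le n$ forces $n \ge 2$, so $\alpha = t_1 t_2 \cdots t_n$ has length at least two and therefore cannot be an empty path in $T^0$. Hence $\alpha$ lies in the first family of
$$RLP(\Gamma) = \{t_1\cdots t_n: t_1, \ldots, t_n \in T \ \mbox{and}\ \mathbf{r}(t_k)=\mathbf{s}(t_{k+1})\ \mbox{for}\ k = 1, \ldots, n-1\} \cup T^0,$$
which tells me that every letter $t_k$ belongs to $T$ and that $\mathbf{r}(t_k) = \mathbf{s}(t_{k+1})$ holds for each $1 \le k \le n-1$. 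I would then observe that the subword $\beta = t_i t_{i+1} \cdots t_j$ inherits both features: its letters $t_i, \ldots, t_j$ form a subcollection of $\{t_1, \ldots, t_n\}$ and so all lie in $T$, and the linear equalities required for $\beta$, namely those with $i \le k < j$, are among the equalities already holding for $\alpha$. Applying the characterization in the reverse direction gives $\beta \in RLP(\Gamma)$, as claimed. (If one prefers to avoid the characterization, the same two points are available directly: that $\beta$ is a linear path is exactly Lemma~\ref{LP_Property}(i), and $\beta$ is reduced because each adjacency-collapse condition for $\beta$ is one of those already imposed on $\alpha$.)

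I do not expect any serious obstacle, since the content is pure bookkeeping; this is why the paper declares the proof clear. The single point warranting a moment's care is the observation that no empty path from $T^0$ can occur as a letter of $\alpha$ once $n \ge 2$---this is precisely what keeps $\beta$ inside the all-in-$T$ family and prevents a spuriously collapsible letter from appearing in the subword. Once that is secured, the inheritance of the linear condition and of reducedness follows immediately by restricting the range of the index $k$, and the proof is complete.
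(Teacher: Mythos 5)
Your proof is correct: the paper omits the proof as ``clear,'' and your argument via the displayed characterization $RLP(\Gamma) = \{t_1\cdots t_n : t_k \in T,\ \mathbf{r}(t_k)=\mathbf{s}(t_{k+1})\} \cup T^0$ is exactly the intended verification, including the key observation that $i<j$ forces $n\ge 2$ and hence rules out the $T^0$ case. Nothing is missing.
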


\begin{them}\label{w4}
Each element of $Q_{\Gamma}$ has a unique normal form of one of the following types:
\[{\rm (i)}\ [\alpha]; \  {\rm (ii)}\ [\beta^{-1}];\ {\rm (iii)}\ [\alpha\beta^{-1}] \ \mbox{and}\ {\rm (iv)}\ [0],\]
where $\alpha \in RP(\Gamma)$, $\beta \in RLP(\Gamma)$ and in {\rm (iii)} $\mathbf{r}(\alpha) \cap\mathbf{r}(\beta) \neq \emptyset$.
\end{them}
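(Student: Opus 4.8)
The plan is to use the confluent noetherian rewriting system from Proposition~\ref{confluent} to show first that every element of $Q_{\Gamma}$ has a representative of one of the four listed shapes, and then to verify that such representatives are genuinely normal forms, i.e.\ that distinct listed representatives are never $R^{\sharp}$-equivalent. Since Lemma~\ref{rwriting}(i) already guarantees that each congruence class $[w]$ contains a unique irreducible word in $X^+$, the real content is to identify \emph{which} irreducible words occur, and to match them with the four types.

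First I would take an arbitrary non-zero element of $Q_{\Gamma}$ and pick its unique irreducible representative $w = x_1x_2\cdots x_k$ with each $x_i \in T \cup T^0 \cup T^{-1}$, which exists by Lemma~\ref{rwriting}(i) and the fact (already noted in the text) that the system is noetherian. The key structural claim I would establish is that in an irreducible word, no letter from $T^{-1}$ can be immediately followed by a letter from $T$ or from $T^0$ without forcing a reduction: a factor $t_i^{-1}t_{i+1}$ with $t_{i+1}\in T$ is rewritten by either (NR3) or (NR4), a factor $t_i^{-1}A$ with $A\in T^0$ is rewritten by (NR1) or (NR5), and a factor $t_i^{-1}t_{i+1}^{-1}$ is permissible. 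Hence once the word enters $T^{-1}$ it stays in $T^{-1}$: any irreducible word has the block form $\alpha\gamma$ where $\alpha$ is a (possibly empty) product of letters from $T\cup T^0$ and $\gamma$ is a (possibly empty) product of letters from $T^{-1}$. Writing $\gamma = \beta^{-1}$ for the corresponding reversed product $\beta$ of letters from $T$, and using that irreducibility of the whole word restricts to irreducibility of $\alpha$ and of $\beta$, I get $\alpha \in RP(\Gamma)$ and $\beta \in RLP(\Gamma)$ (the $T^{-1}$-block must be \emph{linear} precisely because a non-linear juxtaposition of inverse letters $t_i^{-1}t_{i+1}^{-1}$ would mean $t_{i+1}t_i$ had a gap, but there is no reduction rule making the block irreducible otherwise — this is where I would lean on the definitions of $RP$ and $RLP$ and on Lemma~\ref{RLP_Property}).

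The four types then arise by cases on whether each block is empty: both nonempty gives type~(iii), only $\alpha$ gives type~(i), only $\beta$ gives type~(ii), and the zero element gives type~(iv). For type~(iii) I must check the compatibility condition $\mathbf{r}(\alpha)\cap\mathbf{r}(\beta)\neq\emptyset$: the junction letter is $t\,s^{-1}$ where $t$ is the last letter of $\alpha$ and $s^{-1}$ the first letter of $\gamma$, so $s$ is the last letter of $\beta$; since the word is non-zero and irreducible, this junction does not collapse under (NR2), which forces $\mathbf{r}(t)\cap\mathbf{s}(s^{-1})=\mathbf{r}(t)\cap\mathbf{r}(s)\neq\emptyset$, and $\mathbf{r}(t)=\mathbf{r}(\alpha)$, $\mathbf{r}(s)=\mathbf{r}(\beta)$ by Lemma~\ref{ReducedPath}. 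Conversely, any word of one of the four listed shapes is already irreducible: none of (NR1)--(NR6) applies to a reduced $\alpha$, to a reduced $\beta^{-1}$, or across the junction when the range condition holds. Uniqueness is then immediate from Lemma~\ref{rwriting}(i), since two listed words of the \emph{same} type that are $R^{\sharp}$-equivalent are both irreducible and hence equal, while words of \emph{different} types have different block structure and so cannot coincide as irreducible words.

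I expect the main obstacle to be the structural claim that an irreducible word splits cleanly as $\alpha\beta^{-1}$ with the $T^{-1}$-block linear; in particular, ruling out factors $t_i^{-1}t_{i+1}$ requires carefully invoking all of (NR3), (NR4), (NR5) together (exactly the interactions catalogued in the proof of Proposition~\ref{confluent}), and confirming that no \emph{other} adjacency in an irreducible word survives. Verifying that the inverse block is forced to be linear — rather than merely a path — is the subtle point, since the reduction rules constrain adjacent inverse letters only through the source/range incidence inherited from $T$; I would handle this by applying the reduced-path conditions to $\beta$ directly and appealing to the remark that the reduced form of a linear path is linear.
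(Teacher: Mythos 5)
Your strategy is the paper's: pass to the unique irreducible representative guaranteed by Proposition~\ref{confluent}, noetherianity and Lemma~\ref{rwriting}, observe that in an irreducible word no letter of $T^{-1}$ can be followed by a letter of $T\cup T^0$, and read off the block decomposition $\alpha\beta^{-1}$. The case analysis giving the four types, the check of $\mathbf{r}(\alpha)\cap\mathbf{r}(\beta)\neq\emptyset$ at the junction via (NR2), and the uniqueness argument are all sound and coincide with what the paper does.

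The gap is exactly at the step you flag as subtle: forcing the $T^{-1}$-block to be \emph{linear}. Your justification --- that a non-linear juxtaposition $t_i^{-1}t_{i+1}^{-1}$ ``would mean $t_{i+1}t_i$ had a gap'' --- conflates two different conditions. For $x,y\in T$ the factor $x^{-1}y^{-1}$ is rewritten by (NR2) only when $\mathbf{r}(x^{-1})\cap\mathbf{s}(y^{-1})=\mathbf{s}(x)\cap\mathbf{r}(y)=\emptyset$; if instead $\mathbf{s}(x)\cap\mathbf{r}(y)\neq\emptyset$ but $\mathbf{s}(x)\neq\mathbf{r}(y)$ (so $yx$ is a path that is not linear), then no rule among (NR1)--(NR6) matches $x^{-1}y^{-1}$: (NR3), (NR4) and (NR5) require the second letter to lie in $T$ or $T^0$, and (NR1) requires one of the two letters to lie in $T^0$, whereas here both lie in $T^{-1}$. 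Such a factor is therefore irreducible, and your dichotomy ``linear or reducible'' does not hold; appealing to the remark that the reduced form of a linear path is linear is circular, since you have not yet shown the block comes from a linear path. In fairness, the paper's own proof asserts the identical exclusion (``nor any $x^{-1}y^{-1}$ where $yx$ is not linear'') with no more justification than you supply, so you have reproduced the published argument including its weakest point; but as written neither version rules out irreducible words of the form $(yx)^{-1}$ with $yx\in RP(\Gamma)\setminus RLP(\Gamma)$, and closing this would require either a genuine argument about which adjacencies of inverse letters survive reduction or an additional identification beyond (NR1)--(NR6).
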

\begin{proof}
Let $[w] \in Q_{\Gamma}$. By Proposition~\ref{confluent} we can assume $w$ is reduced. If $w \neq 0$ and $w$ is reduced, $w$ does not contain a subword any $x^{-1}y$ where $x \in T$ and $y \in T \cup T^0$ nor any $x^{-1}y^{-1}$ where $x, y \in T$ and $yx$ is not linear. It follows that $w$ must be of the form $\alpha$, $\beta^{-1}$ or $\alpha\beta^{-1}$, where  $\alpha \in RP(\Gamma)$, $\beta \in RLP(\Gamma)$ and for $\alpha\beta^{-1}$,  $\mathbf{r}(\alpha) \cap\mathbf{r}(\beta) \neq \emptyset$.
\end{proof}

We will say that a word $w =\alpha\beta^{-1}$ with $\alpha \in RP(\Gamma)$, $\beta \in RLP(\Gamma)$ and $\mathbf{r}(\alpha) = \mathbf{r}(\beta)$  is a {\it right normal form}. In particular, $0$ is in right normal form.

Note that elements of type (i)-(iii) in Theorem~\ref{w4} may be uniquely expressed as $[\alpha\beta^{-1}]$ with $\mathbf{r}(\alpha) = \mathbf{r}(\beta)$, where in case (i) $[\alpha] = [\alpha\beta^{-1}]$ where $\beta = \mathbf{r}(\alpha)$, in case (ii) $[\beta^{-1}] = [\alpha\beta^{-1}]$ where $\alpha=\mathbf{r}(\beta)$, and in case (iii)
$$ [\alpha\beta^{-1}]=
	\begin{cases}
		[\alpha\beta^{-1}] & \mbox{if}\ \mathbf{r}(\alpha)=\mathbf{r}(\beta)\\
        [\mu\beta^{-1}] & \mbox{otherwise},
	\end{cases}$$
where $\mu=\alpha\mathbf{r}(\beta)$.

\begin{corollary}\label{uniqueNF}
Each non-zero element of  $Q_{\Gamma}$ has a unique right normal form representative $\alpha\beta^{-1}$, where $\alpha \in RP(\Gamma)$, $\beta \in RLP(\Gamma)$ and $\mathbf{r}(\alpha) = \mathbf{r}(\beta)$.
\end{corollary}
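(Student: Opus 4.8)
The plan is to deduce Corollary~\ref{uniqueNF} from Theorem~\ref{w4} together with the explicit conversions displayed just after that theorem, by exhibiting a bijection between the four-type normal forms of Theorem~\ref{w4} and the right normal forms. Existence is almost immediate: given a non-zero $[w]$, Theorem~\ref{w4} places it in exactly one of the types (i)--(iii), and in each type I would produce the required pair $(\alpha,\beta)$ with $\mathbf{r}(\alpha)=\mathbf{r}(\beta)$. For type (i) I take $\beta=\mathbf{r}(\alpha)\in T^0\subseteq RLP(\Gamma)$ and use $\mathbf{r}(\alpha)^{-1}=\mathbf{r}(\alpha)$ together with (NR1) to get $[\alpha\beta^{-1}]=[\alpha]$; for type (ii) I take $\alpha=\mathbf{r}(\beta)\in T^0\subseteq RP(\Gamma)$ and apply (NR1); for type (iii) I keep $(\alpha,\beta)$ when $\mathbf{r}(\alpha)=\mathbf{r}(\beta)$, and otherwise replace $\alpha$ by $\mu=\alpha\mathbf{r}(\beta)$, checking that $\mu\in RP(\Gamma)$ (it is a reduced path, since $\mathbf{r}(\alpha)\neq\mathbf{r}(\beta)$ blocks an (NR1) step and $\mathbf{r}(\alpha)\cap\mathbf{r}(\beta)\neq\emptyset$ blocks an (NR2) step) and that $[\mu\beta^{-1}]=[\alpha\beta^{-1}]$ by (NR1). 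This defines a map $\Phi$ from Theorem~\ref{w4} normal forms to right normal forms.

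For uniqueness I would build the inverse map $\Psi$, which sends a right normal form $\alpha\beta^{-1}$ (with $\mathbf{r}(\alpha)=\mathbf{r}(\beta)$) to its unique reduced word; by Proposition~\ref{confluent} and Lemma~\ref{rwriting} this reduced word is a Theorem~\ref{w4} normal form. First I would note that reducing a right normal form can never yield $0$, since all the relevant steps at the junction of $\alpha$ and $\beta^{-1}$ are instances of (NR1). Then I would show $\Phi$ and $\Psi$ are mutually inverse by a short case analysis, according to whether $\alpha$ and $\beta$ are empty paths and, when both are non-empty, according to the last letter of $\alpha$. When $\alpha$ ends in a letter of $T$, the word $\alpha\beta^{-1}$ is already reduced (the junction factor $t_n y_m^{-1}$ is neither of the forbidden forms $x^{-1}y$ nor $x^{-1}y^{-1}$ identified in the proof of Theorem~\ref{w4}), so $\Psi$ fixes it and $\Phi$ returns it unchanged; when $\alpha$ is empty, or $\beta$ is empty, a single (NR1) step recovers the type (ii) or type (i) form, which $\Phi$ then sends back correctly.

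The step I expect to be the main obstacle is the remaining case: both $\alpha$ and $\beta$ non-empty with $\alpha$ ending in an empty path. Here reducedness of $\alpha$ forces that terminal empty path to equal $\mathbf{r}(\alpha)=\mathbf{r}(\beta)$, so the junction is $\mathbf{r}(\beta)\,y_m^{-1}$ and exactly one (NR1) step fires, deleting that empty path and leaving $\alpha'\beta^{-1}$, where $\alpha'$ is $\alpha$ with its last letter removed. The delicate points are (a) that no further reduction propagates leftward, which requires checking that the new junction factor $t_{n-1}y_m^{-1}$ is permitted, using $\mathbf{r}(\alpha')\neq\mathbf{r}(\beta)$ together with $\mathbf{r}(\alpha')\cap\mathbf{r}(\beta)\neq\emptyset$ (both consequences of $\alpha$ being a reduced path); and (b) that the resulting type (iii) form indeed has $\mathbf{r}(\alpha')\neq\mathbf{r}(\beta)$, so that $\Phi$ takes its \emph{otherwise} branch and rebuilds $\mu=\alpha'\mathbf{r}(\beta)=\alpha$, returning the original right normal form. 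Once $\Phi\circ\Psi=\mathrm{id}$ and $\Psi\circ\Phi=\mathrm{id}$ are established, uniqueness follows from the uniqueness of the Theorem~\ref{w4} normal form: any two right normal forms of a single element have the same image under $\Psi$ and hence coincide.
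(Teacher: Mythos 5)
Your proposal is correct and follows essentially the same route as the paper: the paper derives the corollary directly from Theorem~\ref{w4} together with the displayed conversions of types (i)--(iii) into the form $[\alpha\beta^{-1}]$ with $\mathbf{r}(\alpha)=\mathbf{r}(\beta)$, leaving uniqueness implicit in the uniqueness of the reduced normal form. Your explicit mutually inverse maps $\Phi$ and $\Psi$ merely spell out the details the paper takes for granted.
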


\begin{lemma}\label{aa-1}
If $\alpha \in {LP(\Gamma)}\setminus\{0\}$ then  $[\alpha^{-1}\alpha]=[\mathbf{r}(\alpha)]$.
\end{lemma}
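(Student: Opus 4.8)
The plan is to argue by induction on the length $n$ of the linear path $\alpha = t_1 t_2 \cdots t_n$, where each $t_i \in T \cup T^0$ and $\mathbf{r}(t_i) = \mathbf{s}(t_{i+1})$ for $i = 1, \ldots, n-1$. Throughout I work inside the quotient $Q_{\Gamma} = X^+/R^{\sharp}$, so since $R^{\sharp}$ is a congruence I may freely replace any factor in a product by a word representing the same class.

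For the base case $n = 1$ we have $\alpha = t_1$, so $\alpha^{-1}\alpha = t_1^{-1}t_1$. If $t_1 \in T$, then (NR4) gives $[t_1^{-1}t_1] = [\mathbf{r}(t_1)] = [\mathbf{r}(\alpha)]$. If instead $t_1 = A \in T^0$, then $t_1^{-1} = A$ and $\mathbf{s}(A) = A$, so (NR1) gives $[AA] = [\mathbf{s}(A)A] = [A] = [\mathbf{r}(\alpha)]$. Either way the claim holds.

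For the inductive step, take $n \geq 2$ and write $\alpha = \alpha' t_n$ with $\alpha' = t_1 \cdots t_{n-1}$. By Lemma~\ref{LP_Property}(i) (when $n \geq 3$), or directly (when $n = 2$, where $\alpha' = t_1$ is a single letter), $\alpha'$ is again a nonzero linear path of length $n-1$, and since $\alpha$ is linear we have $\mathbf{r}(\alpha') = \mathbf{r}(t_{n-1}) = \mathbf{s}(t_n)$. From $\alpha^{-1} = t_n^{-1}(\alpha')^{-1}$ we obtain $\alpha^{-1}\alpha = t_n^{-1}(\alpha')^{-1}\alpha' t_n$. Applying the induction hypothesis to $\alpha'$ yields $[(\alpha')^{-1}\alpha'] = [\mathbf{r}(\alpha')] = [\mathbf{s}(t_n)]$, whence $[\alpha^{-1}\alpha] = [t_n^{-1}\mathbf{s}(t_n)t_n]$. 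Now (NR1) gives $\mathbf{s}(t_n)t_n = t_n$, so $[\alpha^{-1}\alpha] = [t_n^{-1}t_n]$, and the base-case computation applied to the single letter $t_n$ gives $[t_n^{-1}t_n] = [\mathbf{r}(t_n)] = [\mathbf{r}(\alpha)]$, completing the induction.

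I do not anticipate a serious obstacle here; the argument is essentially bookkeeping with the defining relations. The one point requiring care is tracking sources and ranges so that (NR1) is the rule that actually applies at the crucial step: one must use that $\alpha$ being linear forces $\mathbf{r}(\alpha') = \mathbf{s}(t_n)$ \emph{exactly}, so that the reduced middle factor $\mathbf{s}(t_n)$ is absorbed into $t_n$ rather than collapsing the product to $0$ via (NR2) or (NR5) (this is precisely where linearity, as opposed to mere concatenability of paths, is used). A secondary point worth stating explicitly is that the two-case split $t_n \in T$ versus $t_n \in T^0$ is genuinely needed, since (NR4) applies only to letters of $T$ while empty paths must be handled by (NR1).
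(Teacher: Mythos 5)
Your proof is correct and is essentially the paper's argument in inductive dress: the paper performs the same telescoping computation directly, collapsing $t_1^{-1}t_1$ to $\mathbf{r}(t_1)=\mathbf{s}(t_2)$ and working outward via (NR4) and (NR1), which is exactly what your induction does when unrolled. Your explicit attention to the case $t_n\in T^0$ (handled by (NR1) rather than (NR4)) and to why linearity prevents a collapse to $0$ is a slight refinement of the paper's terser treatment, but the route is the same.
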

\begin{proof}
 Suppose that $\alpha \in LP(\Gamma)\setminus\{0\}$.  Clearly, if $\alpha \in T \cup T^0$, then $[\alpha^{-1}\alpha]=[\mathbf{r}(\alpha)]$ by (NR1) and (NR4). If $\alpha=t_1t_2\cdots{t_n}$ with $\mathbf{r}(t_i)=\mathbf{s}(t_{i+1})$ for $i=1,2,\cdots,n-1$, then
	$$
	\left.
	\begin{array}{ll}
		[\alpha^{-1}\alpha] &= [t_n^{-1}\cdots{t_2^{-1}}t_1^{-1}t_1t_2\cdots{t_n}]\\
		&= [t_n^{-1}\cdots{t_2^{-1}}\mathbf{r}(t_1)t_2\cdots{t_n}] \hspace{18mm}\big(\mbox{by\ (NR4)}\big)\\
		&= [t_n^{-1}\cdots {t_3^{-1}}\mathbf{r}(t_2)t_3\cdots{t_n}]\hspace{18mm}\big( \mathbf{s}(t_2)=\mathbf{r}(t_1)\big)\\
		&= \cdots \\
		&= [t_n^{-1}\mathbf{r}(t_{n-1})t_n]\hspace{32mm}\big( \mathbf{s}(t_{n-1})=\mathbf{r}(t_{n-2})\big)\\
		&= [\mathbf{r}(t_{n})] \hspace{43mm}\big( \mathbf{s}(t_{n})=\mathbf{r}(t_{n-1})\big)\\
        &= [\mathbf{r}(\alpha)].
	\end{array}
	\right.
	$$
\end{proof}

Moreover, we have

\begin{lemma}\label{ProductS}

For any $[\alpha\beta^{-1}],[\mu\nu^{-1}]\in{Q_\Gamma}$ with $\alpha\beta^{-1}$ and $\mu\nu^{-1}$ in right normal form, we have

$$[\alpha\beta^{-1}][\mu\nu^{-1}] =
\begin{cases}
    {[\alpha\mu\nu^{-1}]} & \mbox{if}\ \beta = \mathbf{r}(\alpha)\ \mbox{and}\ \mathbf{r}(\alpha) \cap \mathbf{s}(\mu) \neq \emptyset\\
	{[\alpha\xi\nu^{-1}]} & \mbox{if}\  \beta \in RLP(\Gamma)\setminus T^0, \mu=\beta\xi\ \mbox{for\ some}\ \xi\in{RP(\Gamma)}\\
	{[\alpha(\nu\eta)^{-1}]} & \mbox{if}\ \beta \in RLP(\Gamma)\setminus T^0, \beta=\mu\eta\ \mbox{for\ some}\ \eta\in{RP(\Gamma)}\\
	[0] & \mbox{otherwise}.
\end{cases}$$
\end{lemma}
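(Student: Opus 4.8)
The plan is to read the product off the presentation and then rewrite to the normal form supplied by Corollary~\ref{uniqueNF}. Since $Q_\Gamma=\langle X:R\rangle$ is a quotient of the free semigroup on $X$, the product of two classes is the class of the concatenation of any representatives, so $[\alpha\beta^{-1}][\mu\nu^{-1}]=[\alpha\beta^{-1}\mu\nu^{-1}]$. As $\alpha\beta^{-1}$ and $\mu\nu^{-1}$ are themselves reduced words, the only new redexes in $\alpha\beta^{-1}\mu\nu^{-1}$ straddle the central junction $\beta^{-1}\mu$; by Proposition~\ref{confluent} I may rewrite in whatever order is convenient, so it suffices to reduce $\beta^{-1}\mu$ and then reattach $\alpha$ on the left and $\nu^{-1}$ on the right. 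The first line is then immediate: if $\beta=\mathbf{r}(\alpha)$ then $\beta\in T^0$, so $\beta^{-1}=\beta$ and $\alpha\beta^{-1}=\alpha$ by (NR1); hence the product is $[\alpha\mu\nu^{-1}]$ exactly when $\mathbf{r}(\alpha)\cap\mathbf{s}(\mu)\neq\emptyset$, and is $[0]$ otherwise by (NR2).

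For the remaining lines I take $\beta\in RLP(\Gamma)\setminus T^0$ and write $\beta=b_1\cdots b_k$ with each $b_i\in T$ and $\mathbf{r}(b_i)=\mathbf{s}(b_{i+1})$, and $\mu=m_1\cdots m_l$ with $m_i\in T\cup T^0$, so that $\beta^{-1}\mu=b_k^{-1}\cdots b_1^{-1}m_1\cdots m_l$. I would reduce this word from the innermost junction outwards. When $m_1=b_1$, relation (NR4) gives $b_1^{-1}m_1\to\mathbf{r}(b_1)$, and because $\mathbf{r}(b_1)=\mathbf{s}(b_2)=\mathbf{r}(b_2^{-1})$ relation (NR1) absorbs the vertex $\mathbf{r}(b_1)$ into $b_2^{-1}$; one matched letter is thereby cancelled and the word becomes $\beta'^{-1}\mu'$ with $\beta'=b_2\cdots b_k$ and $\mu'=m_2\cdots m_l$, again reduced by Lemma~\ref{RLP_Property} and its analogue for reduced paths. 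Iterating this cancellation peels off the longest common prefix of $\beta$ and $\mu$, leaving one of three residues. If $\mu=\beta\xi$ the whole of $\beta$ is consumed and the residue is $\mathbf{r}(\beta)\xi$; reattaching $\alpha$ and using $\alpha\mathbf{r}(\beta)=\alpha\mathbf{r}(\alpha)=\alpha$ (as $\mathbf{r}(\alpha)=\mathbf{r}(\beta)$) gives $[\alpha\xi\nu^{-1}]$. If $\beta=\mu\eta$ the whole of $\mu$ is consumed and the residue is $\eta^{-1}$, so the product is $[\alpha\eta^{-1}\nu^{-1}]=[\alpha(\nu\eta)^{-1}]$. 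In every remaining configuration the cancellation meets a clash and the word reduces to $[0]$.

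I expect the delicate part to be the empty-path bookkeeping underlying the \emph{otherwise} branch, because $\mu$ may contain members of $T^0$ and the prefix comparison is subtler than in the graph case, where $T^0=V$ consists of singletons. At a junction $b_i^{-1}m_i$ there are three possibilities: a genuine match $m_i=b_i\in T$; a clash with $m_i\in T$, $m_i\neq b_i$, in which case (NR3) returns $0$; or $m_i\in T^0$, in which case (NR5) returns $0$ unless $m_i=\mathbf{s}(b_i)$, when (NR1) absorbs $m_i$ and the comparison moves on to $m_{i+1}$. The point that makes the trichotomy clean is that reducedness of $\mu$ prevents the absorbing branch from concealing a clash: directly after a match at position $i-1$ one has $\mathbf{s}(b_i)=\mathbf{r}(b_{i-1})=\mathbf{r}(m_{i-1})$, so reducedness gives $m_i\neq\mathbf{r}(m_{i-1})=\mathbf{s}(b_i)$ and (NR5) fires. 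Handling the remaining internal absorptions and the boundary behaviour at the two ends of $\mu$ is the only real bookkeeping; casting the reduction as an induction on $|\beta|+|\mu|$, where each match or admissible absorption strictly decreases this quantity, and interpreting the prefix relations $\mu=\beta\xi$ and $\beta=\mu\eta$ at the level of paths (so that leading and trailing empty paths are absorbed by (NR1)), then yields exactly the stated four-way formula, with every non-matching configuration collapsing to $[0]$ via (NR2), (NR3) or (NR5).
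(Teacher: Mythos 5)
Your proposal follows essentially the same route as the paper: both reduce the junction word $\beta^{-1}\mu$ by cancelling the common prefix of $\beta$ and $\mu$ via (NR4) and (NR1) (the paper packages this cancellation as Lemma~\ref{aa-1}) and then use the reducedness of $\mu$ to show that any first mismatch collapses to $[0]$ by (NR3) or (NR5). Your explicit remark that the prefix relations $\mu=\beta\xi$ and $\beta=\mu\eta$ must be read at the level of paths, with empty paths absorbed by (NR1), covers a boundary case (a leading $y_1\in T^0$ with $y_1=\mathbf{s}(\beta)$, where the paper's word-level mismatch argument at $k=1$ does not apply) that the paper's proof of the non-comparable case silently skips, but this is a refinement of, not a departure from, the same argument.
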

\begin{proof}
Suppose that $[\alpha\beta^{-1}],[\mu\nu^{-1}]\in{Q_\Gamma}$ with $\alpha\beta^{-1}$ and $\mu\nu^{-1}$ in right normal form. Then $\beta \in RLP(\Gamma)$.

If $\beta = \mathbf{r}(\alpha)$ then either $\mathbf{r}(\alpha) \cap \mathbf{r}(\mu) = \emptyset$ or $\mathbf{r}(\alpha) \cap \mathbf{r}(\mu) \neq \emptyset$. In the former, by (NR2) $[\alpha\beta^{-1}][\mu\nu^{-1}] = [0]$; in the latter, we get $[\alpha\beta^{-1}][\mu\nu^{-1}] = [\alpha\mathbf{r}(\alpha)\mu\nu^{-1}] = [\alpha\mu\nu^{-1}]$ by (NR1).

If $\beta \in RLP(\Gamma)\setminus T^0$, and $\mu=\beta\xi$ for some $\xi\in{RP(\Gamma)}$, then we have
$$[\alpha\beta^{-1}][\mu\nu^{-1}] = [(\alpha\beta^{-1})(\mu\nu^{-1})] = [(\alpha\beta^{-1})(\beta\xi\nu^{-1})] =[\alpha(\beta^{-1}\beta)\xi\nu^{-1}] = [\alpha \mathbf{r}(\beta)\xi\nu^{-1}] = [\alpha\xi\nu^{-1}] $$
 as $\mathbf{r}(\alpha) = \mathbf{r}(\beta)$ and (NR1).

If $\beta \in RLP(\Gamma)\setminus T^0$, and  $\beta=\mu\eta$ for some $\eta\in{RP(\Gamma)}$, then we have $\mu, \eta \in RLP(\Gamma)$, $\mathbf{r}(\mu) = \mathbf{s}(\eta)$ as $\beta \in RLP(\Gamma)$ and  Lemma~\ref{RLP_Property}, which follows from Lemma~\ref{aa-1} that $\mu^{-1}\mu = \mathbf{r}(\mu)$, and also $[\alpha\beta^{-1}][\mu\nu^{-1}] = [(\alpha\beta^{-1})(\mu\nu^{-1})] = [\alpha(\mu\eta)^{-1}\mu\nu^{-1}]=[\alpha\eta^{-1}\mu^{-1}\mu\nu^{-1}] = [\alpha\eta^{-1}\mathbf{r}(\mu)\nu^{-1}]= [\alpha\eta^{-1}\nu^{-1}] =[\alpha(\nu\eta)^{-1}]$ as $\mathbf{s}(
\eta)= \mathbf{r}(\mu) = \mathbf{r}(\nu)$ and (NR1).

If  $\beta=x_1x_2\cdots{x_m}$ and $\mu=y_1y_2\cdots{y_n}$ are not prefix comparable, then there exists $x_i\ne{y_i}$, for $1\leq{i}\leq{m,n}$. Suppose that $k$ is the smallest $i$ such that $x_k\ne{y_k}$ and $x_j=y_j$ for $j=1,2,\cdots,k-1$. From $\beta \in RLP(\Gamma)$ we obtain that $x_1\cdots x_{k-1} \in RLP(\Gamma)$ and $\mathbf{r}(x_{k-1}) = \mathbf{s}(x_k)$ by Lemma~\ref{RLP_Property}, and so $[x^{-1}_{k-1}\cdots x^{-1}_1x_1 \cdots x_{k-1}] = [\mathbf{r}(x_{k-1})]$ by Lemma~\ref{aa-1}. If $y_k \in T^0$ and $y_{k} = \mathbf{s}(x_k)$ we get $\mu = y_1y_2 \cdots y_{k-1}\mathbf{s}(x_k)y_{k+1}\cdots y_n$. Since $y_{k-1} = x_{k-1}$, $\mathbf{r}(x_{k-1}) = \mathbf{s}(x_k)$ we obtain that $\mathbf{r}(y_{k-1}) = \mathbf{s}(x_k)$ and so $y_{k-1}\mathbf{s}(x_k) = y_{k-1}$, which implies that $\mu$ is not reduced, a  contradiction. Thus $y_k \neq \mathbf{s}(x_k)$.  Further we can deduce that
     $$
     	 \left.
     	 \begin{array}{ll}
     	 	[\beta^{-1}\mu] &= [x^{-1}_m\cdots x^{-1}_{k+1}{x^{-1}_k}x^{-1}_{k-1}\cdots x^{-1}_1y_1\cdots{y_{k-1}}{y_k}\cdots{y_n}]\\
     	 	&= [x^{-1}_m\cdots{x^{-1}_{k+1}}{x^{-1}_k}x^{-1}_{k-1}\cdots x^{-1}_1x_1\cdots x_{k-1}{y_k}\cdots {y_n}]\\
      &\hspace{45mm}\big(x_j = y_j, j=1, 2, \ldots, k-1\big)\\
     	 	&= [x^{-1}_m\cdots{x^{-1}_{k+1}}{x^{-1}_k}\mathbf{r}(x_{k-1}){y_k}\cdots{y_n}]\hspace{6mm} \big(\mbox{by \ Lemma}\ \ref{aa-1}\big)\\
     	 	&= [x^{-1}_m\cdots{x^{-1}_{k+1}}{x^{-1}_k}y_k\cdots y_n]\\
      &\hspace{40mm}\big(\mathbf{r}(x_{k-1})=\mathbf{s}(x_k)\ \mbox{since}\ \beta \in \mbox{RLP}(\Gamma), \mbox{by\ (NR1)}\big)\\
     	 	&= [x^{-1}_m\cdots{x^{-1}_{k+1}}0y_{k+1}\cdots {y_n}] \hspace{7mm}\big(\mbox{either}\ x_k\neq y_k \in T, {\mbox{by\ (NR3)}} \\
     &\hspace{55mm} \mbox{or}\ y_k \in T^0, y_k \neq \mathbf{s}(x_k), {\mbox{by\ (NR5)}}\big)\\
      	    &= [0],
     	 \end{array}
     	 \right.
     	 $$
which follows that $[\alpha\beta^{-1}][\mu\nu^{-1}]=[\alpha(\beta^{-1}\mu)\nu^{-1}]=[0]$.
\end{proof}

\begin{lemma}\label{ES}
 The idempotent set of $Q_{\Gamma}$ is $E(Q_\Gamma)=\{[\alpha\alpha^{-1}]: \alpha\in{RLP(\Gamma)}\}\cup\{[0]\}$, which contains a subsemilattice $E= E(Q_{\Gamma})\setminus \{[A]: A \in T^0\}=\{[\alpha\alpha^{-1}]: \alpha \in RLP(\Gamma)\setminus T^0\} \cup \{0\}$.
\end{lemma}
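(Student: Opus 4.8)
The plan is to establish the displayed description of $E(Q_\Gamma)$ by proving two inclusions, and then to check separately that the subset $E$ is closed under multiplication and commutative, hence a subsemilattice. Throughout, the two tools that do the work are the multiplication rule of Lemma~\ref{ProductS} and the uniqueness of the right normal form (Corollary~\ref{uniqueNF}).

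For the inclusion $\supseteq$, note first that $[0]$ is idempotent by (NR6). For $\alpha\in RLP(\Gamma)$ the word $\alpha\alpha^{-1}$ is already a right normal form, and using Lemma~\ref{aa-1} together with (NR1) one computes directly
\[[\alpha\alpha^{-1}][\alpha\alpha^{-1}]=[\alpha(\alpha^{-1}\alpha)\alpha^{-1}]=[\alpha\,\mathbf{r}(\alpha)\,\alpha^{-1}]=[\alpha\alpha^{-1}],\]
so every $[\alpha\alpha^{-1}]$ is idempotent. This requires no case distinction because $RLP(\Gamma)\subseteq LP(\Gamma)$, so Lemma~\ref{aa-1} applies whether or not $\alpha\in T^0$.

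The substantive inclusion is $\subseteq$. I would take an arbitrary non-zero idempotent and, by Corollary~\ref{uniqueNF}, write it as $[\alpha\beta^{-1}]$ with $\alpha\in RP(\Gamma)$, $\beta\in RLP(\Gamma)$ and $\mathbf r(\alpha)=\mathbf r(\beta)$. Squaring and applying Lemma~\ref{ProductS} with $\mu=\alpha$, $\nu=\beta$, the hypothesis that the element is non-zero rules out the ``otherwise'' outcome, so exactly the three non-zero cases of that lemma remain, and in each the aim is to force $\alpha=\beta$. In the first case $(\beta=\mathbf r(\alpha))$ the square reduces by (NR1) to $[\alpha\alpha]$, so idempotency reads $[\alpha\alpha]=[\alpha]$; since $\alpha$ is a pure path and no non-zero reduction among (NR1)--(NR5) deletes a letter of $T$, comparing the number of $T$-letters on the two sides forces $\alpha$ to contain none, and a short junction analysis of the self-concatenation of a reduced $T^0$-word then forces $\alpha=A\in T^0$, whence $\beta=\mathbf r(\alpha)=A=\alpha$. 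In the second and third cases the square is $[\alpha\xi\beta^{-1}]$ with $\alpha=\beta\xi$, respectively $[\alpha(\beta\eta)^{-1}]$ with $\beta=\alpha\eta$; here I would count $T$-letters (resp.\ $T^{-1}$-letters) against those of $[\alpha\beta^{-1}]$, using that the relevant junctions are of the non-cancelling shape ``(non-inverse)(inverse)'' so that Corollary~\ref{uniqueNF} applies letter-for-letter. The count forces $\xi\in T^0$ (resp.\ $\eta\in T^0$), and then (NR1) together with $\mathbf r(\alpha)=\mathbf r(\beta)$ collapses the factorisation to $\alpha=\beta$. In every surviving case $\alpha=\beta\in RLP(\Gamma)$, so the idempotent is $[\alpha\alpha^{-1}]$, completing $\subseteq$.

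For the subsemilattice statement, observe that $[\alpha\alpha^{-1}]=[A]$ precisely when $\alpha=A\in T^0$, so removing $\{[A]:A\in T^0\}$ leaves exactly $E=\{[\alpha\alpha^{-1}]:\alpha\in RLP(\Gamma)\setminus T^0\}\cup\{[0]\}$. To see $E$ is a subsemilattice I would evaluate $[\alpha\alpha^{-1}][\gamma\gamma^{-1}]$ for $\alpha,\gamma\in RLP(\Gamma)\setminus T^0$ by Lemma~\ref{ProductS}: it equals $[\gamma\gamma^{-1}]$ when $\gamma=\alpha\xi$, equals $[\alpha\alpha^{-1}]$ when $\alpha=\gamma\eta$ (here Lemma~\ref{RLP_Property} guarantees the prefixes are again linear), and equals $[0]$ when $\alpha,\gamma$ are not prefix-comparable. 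Each value lies in $E$ (since $\alpha,\gamma\notin T^0$ we never land on an excluded $[A]$), which gives closure; and as the value is the ``longer'' of the two prefix-comparable paths, or $[0]$ otherwise, it is symmetric in $\alpha$ and $\gamma$, giving commutativity. Associativity is inherited from $Q_\Gamma$, so $E$ is a commutative band, i.e.\ a subsemilattice. The main obstacle is the $\subseteq$ direction, specifically the passage from $[\alpha\beta^{-1}]^2=[\alpha\beta^{-1}]$ to $\alpha=\beta$: the delicate point is that $\alpha\in RP(\Gamma)$ may legitimately carry interior $T^0$-letters (a reduced path such as $AB$ with $A\neq B$ and $A\cap B\neq\emptyset$ is allowed), so one cannot argue by raw word length; the safe bookkeeping is to count letters of $T$ and of $T^{-1}$ separately, noting that among the non-zero rewrites only (NR4) changes these counts and does so symmetrically, and to feed this into the uniqueness of Corollary~\ref{uniqueNF}.
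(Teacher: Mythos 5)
Your proposal is correct and follows essentially the same route as the paper: both directions rest on the uniqueness of the right normal form, squaring via Lemma~\ref{ProductS} and splitting into the case $\beta=\mathbf{r}(\alpha)$ versus the two prefix-comparable cases to force $\alpha=\beta$, and the semilattice claim is verified by the same prefix-comparability computation. Your letter-counting justifications merely flesh out steps the paper asserts more tersely (e.g.\ that $[\alpha\alpha]=[\alpha]$ forces $\alpha\in T^0$ and that $[\alpha]=[\alpha\xi]$ forces $\xi=\mathbf{r}(\alpha)$), so there is no substantive difference.
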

\begin{proof}
It is clear that $[0] \in E(Q_\Gamma)$. It follows from Lemma~\ref{aa-1} that for all $\alpha \in RLP(\Gamma)$, $[\alpha\alpha^{-1}] \in E(Q_{\Gamma})$. Conversely,  Suppose that $[\alpha\beta^{-1}]\in{Q_\Gamma\setminus\{[0]\}}$ is such that $\alpha\beta^{-1}$ is in right normal form.  Assume that $[\alpha\beta^{-1}][\alpha\beta^{-1}]=[\alpha\beta^{-1}]$.   It follows from Lemma~\ref{ProductS} that either $\beta =\mathbf{r}(\alpha)$ and $\mathbf{r}(\alpha) \cap \mathbf{s}(\alpha) \neq \emptyset$ or $\beta \in RLP{\Gamma}\setminus T^0$ and $\alpha$ and $\beta$ are prefix  comparable.  In the former, we get $[\alpha\alpha]=[\alpha]$. It can only happen when $\alpha \in T^0$ as $[\alpha\beta^{-1}] \neq [0]$. It follows that $\alpha = \beta \in T^0 \subseteq RLP(\Gamma)$. In the latter, if  $\alpha = \beta\xi$ for some $\xi\in{RP(\Gamma)}$, then $[\alpha\beta^{-1}] =[\alpha\xi\beta^{-1}]$ which implies that $[\alpha] = [\alpha\xi]$ and then $\xi = \mathbf{r}(\alpha)$ as $\alpha \in RP(\Gamma)$. Thus $\alpha = \beta\xi = \beta \mathbf{r}(\alpha) = \beta \in RLP(\Gamma)$ as $\mathbf{r}(\alpha) = \mathbf{r}(\beta)$. Similarly, if $\beta= \alpha\eta$ for some $\eta\in{RP(\Gamma)}$, we get  that $\alpha = \beta \in RLP(\Gamma)$. Hence $E(Q_\Gamma)=\{[\alpha\alpha^{-1}]: \alpha\in{RLP(\Gamma)}\}\cup\{[0]\}$.

Next we show that $E=E(Q_{\Gamma})\setminus \{[A]: A \in T^0\}=\{[\alpha\alpha^{-1}]: \alpha \in RLP(\Gamma)\setminus T^0\} \cup \{0\}$ is a semilattice. Of course for all $[\alpha\alpha^{-1}] \in E$ we have $[0] [\alpha\alpha^{-1}] = [\alpha\alpha^{-1}][0]=[0]$ . Suppose that $[\alpha\alpha^{-1}],[\beta\beta^{-1}]\in E\setminus \{[0]\}$, we have
	$$
	\left.
	\begin{array}{ll}
		{[\alpha\alpha^{-1}][\beta\beta^{-1}]} &=
	      \begin{cases}
			{[\beta\beta^{-1}]} & \mbox{if}\ \beta=  \alpha\xi \ \mbox{for\ some}\ \xi\in{RP(\Gamma)}\\
			{[\alpha\alpha^{-1}]} & \mbox{if}\  \alpha = \beta\eta \ \mbox{for\ some}\ \eta\in{RP(\Gamma)} \\
			[0] & \mbox{otherwise}
		\end{cases}\\
&={[\beta\beta^{-1}][\alpha\alpha^{-1}]}\\
		& \in E(S_\Gamma).
	\end{array}
	\right.
	$$
\end{proof}

Note that for two distinct elements $A, B \in T^0$ we have $[A], [B] \in E(Q_{\Gamma})$ and  if $A \cap B \neq \emptyset$ then $[AB] \notin E(Q_{\Gamma})$ and also $[A][B] \neq [B][A]$. So in general, $E$ is not a band.

\begin{lemma}\label{regular}
For any $[\alpha\beta^{-1}]\in{Q_\Gamma}$ with $\alpha\beta^{-1}$ in right normal form, $[\alpha\beta^{-1}]$ is regular if and only if $\alpha\in{RLP(\Gamma)}$ or $[\alpha\beta^{-1}]=[0]$.
\end{lemma}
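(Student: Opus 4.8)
The plan is to prove both implications separately, dispatching the zero element by hand and otherwise reducing everything to the explicit product rule of Lemma~\ref{ProductS}, the description of idempotents in Lemma~\ref{ES}, and the behaviour of inverses recorded in Lemma~\ref{aa-1RaLa-1a}.

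\emph{Sufficiency.} The element $[0]$ is idempotent, hence regular. Suppose $\alpha\in RLP(\Gamma)$. Then both $\alpha$ and $\beta$ lie in $RLP(\Gamma)$ and $\mathbf{r}(\alpha)=\mathbf{r}(\beta)$, so $[\beta\alpha^{-1}]$ is a genuine element of $Q_\Gamma$ in right normal form. Using $\beta^{-1}\beta=\mathbf{r}(\beta)=\mathbf{r}(\alpha)$ and $\alpha^{-1}\alpha=\mathbf{r}(\alpha)$ from Lemma~\ref{aa-1} together with (NR1), I would check directly that
\[
[\alpha\beta^{-1}][\beta\alpha^{-1}][\alpha\beta^{-1}]=[\alpha\alpha^{-1}][\alpha\beta^{-1}]=[\alpha\beta^{-1}],
\]
so that $[\alpha\beta^{-1}]$ is regular, with $[\beta\alpha^{-1}]$ as an inverse.

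\emph{Necessity.} Let $a=[\alpha\beta^{-1}]\neq[0]$ be regular and choose an inverse $b$ of $a$ (so $aba=a$). By Lemma~\ref{aa-1RaLa-1a} the product $e:=ab$ is idempotent, and $e\neq[0]$ since otherwise $a=aba=[0]$. Hence, by Lemma~\ref{ES}, $e=[\gamma\gamma^{-1}]$ for some $\gamma\in RLP(\Gamma)$. The crucial observation is that $e=a\cdot b$ is a \emph{nonzero right multiple} of $a$, so it suffices to establish the following claim: if $\alpha\notin RLP(\Gamma)$, then for every $y$ with $ay\neq[0]$ the forward part of the normal form of $ay$ again lies outside $RLP(\Gamma)$. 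Applying this with $y=b$ would give $\gamma\notin RLP(\Gamma)$, contradicting $\gamma\in RLP(\Gamma)$; thus $\alpha\in RLP(\Gamma)$.

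To prove the claim I would argue as follows. By Lemma~\ref{ProductS} every nonzero product $ay$ has forward part equal to the normal form of $\alpha\chi$, where $\chi$ is a reduced path (possibly an empty path) appended on the right of $\alpha$; in particular $\alpha$ sits at the left-hand end. Since the pre-normal-form word $\alpha\chi\omega^{-1}$ consists of a block of forward letters followed by a block of inverse letters, none of (NR3)--(NR5) --- whose left-hand sides all have the shape $t^{-1}(\text{forward})$ --- can fire inside the forward block, so the only reductions available there are deletions of empty-path letters via (NR1) and annihilations via (NR2), the latter being excluded because $ay\neq[0]$. As $\alpha$ is already reduced, (NR1) deletes nothing strictly inside $\alpha$. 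Let $k$ be least with $\mathbf{r}(t_k)\neq\mathbf{s}(t_{k+1})$ in $\alpha=t_1\cdots t_n$. If $k+1<n$, this junction is strictly internal and survives; if $k+1=n$ with $t_n\in T$, then $t_n$ cannot be deleted and the junction survives; the only remaining case is $k+1=n$ with $t_n\in T^0$ deleted at the $\alpha$--$\chi$ seam (so $t_n=\mathbf{s}(\chi_1)$), whereupon the new junction $t_{n-1}\chi_1$ satisfies $\mathbf{r}(t_{n-1})\neq\mathbf{s}(\chi_1)=t_n$ and is again non-linear, with $\chi_1$ not itself deletable. In every case the reduced form of $\alpha\chi$ retains a non-linear junction, proving the claim.

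The main obstacle is precisely this claim: one must check that a non-linear junction of $\alpha$ can neither be smoothed out by empty-path cancellations nor be pushed off the right-hand end, the delicate point being the case in which $\alpha$ terminates in an empty-path letter that is absorbed into the appended factor $\chi$. Once the claim is secured, the remainder is a routine combination of Lemmas~\ref{aa-1}, \ref{ProductS}, \ref{ES} and~\ref{aa-1RaLa-1a}.
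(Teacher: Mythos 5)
Your proof is correct and takes essentially the same route as the paper: for sufficiency you exhibit $[\beta\alpha^{-1}]$ as an inverse exactly as the paper does, and for necessity you use that $ab$ is a nonzero idempotent $[\gamma\gamma^{-1}]$ with $\gamma\in RLP(\Gamma)$ and that the forward part of a nonzero right multiple of $[\alpha\beta^{-1}]$ begins with $\alpha$. Your seam analysis carefully justifies the step the paper dispatches with the bare assertion ``by Lemma~\ref{ProductS} $\alpha$ must be a prefix of $\zeta$,'' correctly handling the one delicate case where a terminal $T^0$ letter of $\alpha$ is absorbed; this is a welcome refinement rather than a different approach.
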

\begin{proof}
Clearly $[0]$ is regular.  If $\alpha\in{RLP(\Gamma)}$, then we have
$$
\left.
\begin{array}{ll}
	{[\alpha\beta^{-1}][\beta\alpha^{-1}][\alpha\beta^{-1}]}
	&= 	{[\alpha{\mathbf{r}(\beta)}(\alpha^{-1}\alpha)\beta^{-1}]} \\
	&={[\alpha\alpha^{-1}\alpha\beta^{-1}]}\\
	&={[ \alpha{\mathbf{r}(\alpha)}\beta^{-1}]}\\
	&={[\alpha\beta^{-1}]},
\end{array}
\right.
$$
which implies that $[\beta\alpha^{-1}]$ is an inverse of $[\alpha\beta^{-1}]$ and so $[\alpha\beta^{-1}]$ is regular.

Conversely, we assume that $[\alpha\beta^{-1}] \in Q_{\Gamma}\setminus \{[0]\}$ is a regular element, where $\alpha\beta^{-1}$ is in right normal form.  Assume that $[\mu\nu^{-1}]$ is an inverse of $[\alpha\beta^{-1}]$, where $\mu\nu^{-1}$ is in right normal form. Then $[\alpha\beta^{-1}][\mu\nu^{-1}] \in E(Q_{\Gamma})\setminus \{[0]\}$, and so there exists $\zeta \in RLP(\Gamma)$ such that $[\alpha\beta^{-1}][\mu\nu^{-1}] = [\zeta\zeta^{-1}]$. By Lemma~\ref{ProductS} $\alpha$ must be a prefix of $\zeta$. By Lemma~\ref{RLP_Property} we get $\alpha \in RLP(\Gamma)$.
\end{proof}

According to Lemma~\ref{regular} and its proof, if $[\alpha\beta^{-1}] \in Q_{\Gamma}\setminus \{[0]\}$ is regular, where $\alpha\beta^{-1}$ is in right normal form, then $\alpha, \beta \in RLP(\Gamma)$ and the inverse of $[\alpha\beta^{-1}]$ is $[\beta\alpha^{-1}]$. Further we get:
\begin{lemma}\label{regular LR}
If $[\alpha\beta^{-1}]\in{Q_\Gamma}\setminus \{[0]\}$ is regular with $\alpha\beta^{-1}$ in right normal form then $\alpha, \beta \in RLP(\Gamma)$ and $[\alpha\alpha^{-1}] \ {\mathcal{R}}\ [\alpha\beta^{-1}] \ {\mathcal{L}}\ [\beta\beta^{-1}]$, where $[\alpha\alpha^{-1}], [\beta\beta^{-1}] \in E(Q_{\Gamma})$.
\end{lemma}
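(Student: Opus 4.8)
The plan is to realize the two idempotents $[\alpha\alpha^{-1}]$ and $[\beta\beta^{-1}]$ as the products $ab$ and $ba$ attached to a regular element $a$ and a chosen inverse $b$, and then to read off the Green's-relation statement directly from Lemma~\ref{aa-1RaLa-1a}. In other words, I would not attempt to verify the $\mathcal{R}$- and $\mathcal{L}$-relations by hand from the defining conditions; instead I would reduce everything to the general semigroup fact already recorded in Lemma~\ref{aa-1RaLa-1a}.

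First I would record the preliminary conclusion that $\alpha, \beta \in RLP(\Gamma)$. This is exactly what Lemma~\ref{regular} and the remark following it establish for a non-zero regular element in right normal form; that same remark also supplies the concrete inverse $[\beta\alpha^{-1}]$ of $[\alpha\beta^{-1}]$. Setting $a = [\alpha\beta^{-1}]$ and $b = [\beta\alpha^{-1}]$, we thus have $b$ an inverse of $a$, so that Lemma~\ref{aa-1RaLa-1a} applies to the pair $(a,b)$.

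Next I would compute the two products explicitly. Since $\beta \in RLP(\Gamma)$, Lemma~\ref{aa-1} gives $[\beta^{-1}\beta] = [\mathbf{r}(\beta)]$, whence
\[ ab = [\alpha\beta^{-1}][\beta\alpha^{-1}] = [\alpha(\beta^{-1}\beta)\alpha^{-1}] = [\alpha\mathbf{r}(\beta)\alpha^{-1}] = [\alpha\alpha^{-1}], \]
the last step using (NR1) together with the right-normal-form equality $\mathbf{r}(\alpha) = \mathbf{r}(\beta)$. Symmetrically, using $[\alpha^{-1}\alpha] = [\mathbf{r}(\alpha)]$ from Lemma~\ref{aa-1},
\[ ba = [\beta\alpha^{-1}][\alpha\beta^{-1}] = [\beta\mathbf{r}(\alpha)\beta^{-1}] = [\beta\beta^{-1}]. \]
By Lemma~\ref{aa-1RaLa-1a} both $ab$ and $ba$ are idempotents, so $[\alpha\alpha^{-1}], [\beta\beta^{-1}] \in E(Q_\Gamma)$ (this membership can also be cited directly from Lemma~\ref{ES}, as $\alpha,\beta \in RLP(\Gamma)$).

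Finally, the relational conclusion of Lemma~\ref{aa-1RaLa-1a}, namely $ab\,\mathcal{R}\,a\,\mathcal{L}\,ba$, becomes after substitution precisely $[\alpha\alpha^{-1}]\,\mathcal{R}\,[\alpha\beta^{-1}]\,\mathcal{L}\,[\beta\beta^{-1}]$, which is what the lemma asserts. I do not expect a genuine obstacle here, since all the difficulty is absorbed into the general fact of Lemma~\ref{aa-1RaLa-1a}; the only point requiring care is the bookkeeping in the two displayed product computations, where one must use the right-normal-form condition $\mathbf{r}(\alpha)=\mathbf{r}(\beta)$ so that (NR1) collapses the intermediate vertex factor $\mathbf{r}(\beta)$ (resp.\ $\mathbf{r}(\alpha)$) and leaves the claimed idempotent.
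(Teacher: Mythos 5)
Your proposal is correct and follows essentially the same route the paper intends: the paper derives this lemma directly from the remark after Lemma~\ref{regular} (which supplies $\alpha,\beta\in RLP(\Gamma)$ and the inverse $[\beta\alpha^{-1}]$) together with the general fact recorded as Lemma~\ref{aa-1RaLa-1a}. Your explicit verification that $ab=[\alpha\alpha^{-1}]$ and $ba=[\beta\beta^{-1}]$ via Lemma~\ref{aa-1} and (NR1) is exactly the bookkeeping the paper leaves implicit.
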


\begin{lemma}\label{L*-class}
For any $[\alpha\beta^{-1}]\in{Q_\Gamma}\setminus \{[0]\}$ with $\alpha\beta^{-1}$ in right normal form, we have $[\alpha\beta^{-1}]\ {\mathcal{L^*}}\ [\beta\beta^{-1}]$.
\end{lemma}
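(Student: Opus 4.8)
The plan is to apply Lemma~\ref{L*Def}. Since $\beta\in RLP(\Gamma)$, Lemma~\ref{ES} shows that $e:=[\beta\beta^{-1}]$ is an idempotent, so it suffices to verify the two conditions of Lemma~\ref{L*Def} for $e$ and $a:=[\alpha\beta^{-1}]$: namely (1) $ae=a$, and (2) for all $x,y\in Q_{\Gamma}^1$, $ax=ay$ implies $ex=ey$. Note that the reverse implication $ex=ey\Rightarrow ax=ay$ is automatic once $ae=a$ is known, since $ex=ey$ gives $a(ex)=a(ey)$, i.e. $(ae)x=(ae)y$, i.e. $ax=ay$; this is why verifying (1) and the forward implication of (2) is enough.

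Condition (1) is a direct application of Lemma~\ref{ProductS}. Because $\alpha\beta^{-1}$ is in right normal form we have $\mathbf r(\alpha)=\mathbf r(\beta)$, and the product $[\alpha\beta^{-1}][\beta\beta^{-1}]$ falls into the first case of that lemma when $\beta\in T^0$, and into the second case with $\xi=\mathbf r(\beta)$ when $\beta\in RLP(\Gamma)\setminus T^0$; in either situation it collapses to $[\alpha\mathbf r(\beta)\beta^{-1}]=[\alpha\beta^{-1}]=a$.

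For condition (2) I would first record the shape of the two translates. Writing a non-identity element as $x=[\mu\nu^{-1}]$ in right normal form (the cases $x=[0]$ and $x=1$ being immediate), and applying Lemma~\ref{ProductS} to $ax=[\alpha\beta^{-1}][\mu\nu^{-1}]$ and to $ex=[\beta\beta^{-1}][\mu\nu^{-1}]$, the equality $\mathbf r(\alpha)=\mathbf r(\beta)$ forces both products into the \emph{same} case of that lemma; in each non-zero case the two outputs agree except in their leftmost block, so that $ax=[\alpha\gamma]$ and $ex=[\beta\gamma]$ for one common word $\gamma$ (equal to $\mu\nu^{-1}$, $\xi\nu^{-1}$ or $(\nu\eta)^{-1}$ according to the case), while in the remaining case both equal $[0]$. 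Equivalently, writing $a=[\alpha][\beta^{-1}]$, $e=[\beta][\beta^{-1}]$ and setting $w:=[\beta^{-1}]x$, we get $ax=[\alpha]w$ and $ex=[\beta]w$, where $\mathbf s(w)=\mathbf r(\alpha)=\mathbf r(\beta)$ whenever $w\neq[0]$ (since by Lemma~\ref{ProductS} the left block of $[\beta^{-1}]x$ begins with $\mathbf r(\beta)$). Thus the task reduces to showing that the value of $[\alpha]w$ determines the value of $[\beta]w$.

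The heart of the matter, and the step I expect to be the main obstacle, is the cancellation statement: $[\alpha]w_1=[\alpha]w_2\Rightarrow[\beta]w_1=[\beta]w_2$. When the products are $[0]$ this is clear, because for $w$ with $\mathbf s(w)=\mathbf r(\alpha)$ the first case of Lemma~\ref{ProductS} applies and $[\alpha]w$ (and likewise $[\beta]w$) vanishes exactly when $w=[0]$. When the products are non-zero I would prove the stronger fact that left multiplication by $[\alpha]$ is injective on the set of elements of source $\mathbf r(\alpha)$, giving $w_1=w_2$. Concretely, $[\alpha]w_i$ equals $[\alpha\rho_i\sigma_i^{-1}]$ where $w_i=[\mathbf r(\alpha)\rho_i\sigma_i^{-1}]$, and by Corollary~\ref{uniqueNF} it suffices to recover $\rho_i,\sigma_i$ from the unique right normal form of $[\alpha\rho_i\sigma_i^{-1}]$ by deleting the fixed prefix $\alpha$. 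The delicate point is that the reduction of $\alpha\rho_i$ may absorb a boundary empty path, either a trailing $T^0$-letter of $\alpha$ or a leading $T^0$-letter of $\rho_i$; but every such boundary reduction is governed solely by the common meeting point $\mathbf r(\alpha)$, and therefore deletes exactly the same letters when $\alpha$ is replaced by $\beta$. Hence the normal form of $[\alpha\rho_i\sigma_i^{-1}]$ determines $w_i$, and after substituting $\beta$ for the stripped prefix it equally determines $[\beta]w_i=ex_i$. This identical boundary behaviour, forced by $\mathbf r(\alpha)=\mathbf r(\beta)$, is precisely what makes $ax$ determine $ex$ and completes (2). I finally remark that in the regular case $\alpha\in RLP(\Gamma)$ one may bypass this altogether: Lemma~\ref{regular LR} yields $a\,{\mathcal L}\,[\beta\beta^{-1}]=e$, whence $a\,{\mathcal L}^*\,e$ since ${\mathcal L}\subseteq{\mathcal L}^*$; so the genuine content lies in the non-regular case $\alpha\in RP(\Gamma)\setminus RLP(\Gamma)$.
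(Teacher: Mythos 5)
Your proposal is correct and follows essentially the same route as the paper: both verify the two conditions of Lemma~\ref{L*Def} via the product formula of Lemma~\ref{ProductS}, and both rest on the same key cancellation step, namely that $[\alpha\gamma_1]=[\alpha\gamma_2]$ forces $[\beta\gamma_1]=[\beta\gamma_2]$ because the prefix $\alpha$ can be stripped (replaced by $\mathbf{r}(\alpha)=\mathbf{r}(\beta)$) using uniqueness of normal forms. The paper organizes this as five cases of prefix-comparability of $x_1,x_2$ with $\beta$ rather than via $w=[\beta^{-1}]x$, but the substance is identical, and your discussion of the boundary reductions actually supplies more justification for the stripping step than the paper does.
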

\begin{proof}
For any $[\alpha\beta^{-1}]  \in{Q_\Gamma}\setminus \{[0]\}$ with $\alpha\beta^{-1}$ in right normal form, we get $\beta \in RLP(\Gamma)$ and so $[\beta\beta^{-1}] \in E(Q_{\Gamma})$ by Lemma~\ref{ES}. And also we have $[\alpha\beta^{-1}][\beta\beta^{-1}]=[\alpha\beta^{-1}]$. Suppose that for all $[x_1y_1^{-1}],[x_2y_2^{-1}]\in{Q_\Gamma}$  with  $x_1y_1^{-1}$ and $x_2y_2^{-1}$ in right normal form, $[\alpha\beta^{-1}][x_1y_1^{-1}]=[\alpha\beta^{-1}][x_2y_2^{-1}]$. Next  we show that $[\beta\beta^{-1}][x_1y_1^{-1}]=[\beta\beta^{-1}][x_2y_2^{-1}]$.

If $\beta =\mathbf{r}(\alpha)$, then  $[\beta\beta^{-1}] = [\mathbf{r}(\alpha)]$ and from $[\alpha\beta^{-1}][x_1y_1^{-1}]=[\alpha\beta^{-1}][x_2y_2^{-1}]$ we obtain that either $\mathbf{r}(\alpha) \cap \mathbf{s}(x_1) = \emptyset$ and $\mathbf{r}(\alpha) \cap \mathbf{s}(x_2) = \emptyset$ or $\mathbf{r}(\alpha) \cap \mathbf{s}(x_1) \neq \emptyset$ and $\mathbf{r}(\alpha) \cap \mathbf{s}(x_2) \neq \emptyset$. In the former, we certainly get $[\mathbf{r}(\alpha)][x_1y_1^{-1}]=[\mathbf{r}(\alpha)][x_2y_2^{-1}] =[0]$; in the latter, we get $[\alpha x_1y_1^{-1}]=[\alpha x_2y_2^{-1}]$ which implies that $[\mathbf{r}(\alpha) x_1y_1^{-1}]=[\mathbf{r}(\alpha)x_2y_2^{-1}]$. So $[\beta\beta^{-1}][x_1y_1^{-1}]=[\beta\beta^{-1}][x_2y_2^{-1}]$.

 Next assume that $\beta \neq \mathbf{r}(\alpha)$. According to the prefix comparable of $x_1$, $x_2$ with $\beta$, respectively, there exist five cases as follows:

{\bf Case 1.} $x_1=\beta\mu_1$ and $x_2=\beta\mu_2$ for some $\mu_1,\mu_2\in{RP(\Gamma)}$.

{\bf Case 2.} $x_1=\beta\mu_1$ and $\beta= x_2\mu_3$ for some $\mu_1,\mu_3\in{RP(\Gamma)}$.

{\bf Case 3.} $\beta= x_1\mu_4$ and $x_2=\beta\mu_2$ for some $\mu_2, \mu_4\in{RP(\Gamma)}$.

{\bf Case 4.}  $\beta= x_1\mu_4$ and $\beta= x_2\mu_3$ for some $\mu_3,\mu_4\in{RP(\Gamma)}$.

{\bf Case 5.} $\beta$ and $x_1$ are not prefix comparable and $\beta$ and $x_2$ are not prefix comparable.

Notice that in Case 5, we have $[\alpha\beta^{-1}][x_1y_1^{-1}]=[0]=[\alpha\beta^{-1}][x_2y_2^{-1}]$, and also $[\beta\beta^{-1}][x_1y_1^{-1}]=[0]=[\beta\beta^{-1}][x_2y_2^{-1}].$

It is sufficient to show Case 1 as Case 2, Case 3 and Case 4 are similar to Case 1. Now we consider Case 1. We have
 $$[\alpha\beta^{-1}][x_1y_1^{-1}]=[\alpha\mu_1y_1^{-1}] \neq [0]\ \mbox{and}\  [\alpha\beta^{-1}][x_2y_2^{-1}]=[\alpha\mu_2y_2^{-1}] \neq [0].$$
By the hypothesis we get $[\alpha\mu_1y_1^{-1}]=[\alpha\mu_2y_2^{-1}] \neq [0]$, which implies that $[\mathbf{r}(\alpha)\mu_1y^{-1}_1]=[\mathbf{r}(\alpha)\mu_2y^{-1}_2]$ as $\alpha \in RP(\Gamma)$, and then $[\beta][\mathbf{r}(\alpha)\mu_1y^{-1}_1]=[\beta][\mathbf{r}(\alpha)\mu_2y^{-1}_2]$, that is, $[\beta\mu_1y^{-1}_1]=[\beta\mu_2y^{-1}_2]$ as $\mathbf{r}(\alpha) = \mathbf{r}(\beta)$. Note that
$$[\beta\beta^{-1}][x_1y_1^{-1}]=[\beta\mu_1{y_1^{-1}}]\ and\ [\beta\beta^{-1}][x_2y_2^{-1}]=[\beta\mu_2y_2^{-1}].$$
It follows that $[\beta\beta^{-1}][x_1y_1^{-1}]=[\beta\beta^{-1}][x_2y_2^{-1}]$. Hence we have $[\alpha\beta^{-1}]\ {\mathcal{L}}^*\ [\beta\beta^{-1}]$ by Lemma~\ref{L*Def}.
\end{proof}

\begin{lemma}\label{LR*-class}
Suppose that $[\alpha\beta^{-1}]$, $[\mu\nu^{-1}]\in{Q_\Gamma} \setminus \{[0]\}$ where $\alpha\beta^{-1}$ and $\mu\nu^{-1}$ are in right normal form.
\begin{enumerate}
\item[\rm (i)]	$[\alpha\beta^{-1}]\ {\mathcal{L}}^*\ [\mu\nu^{-1}]$ if and only if $\beta = \nu$;
\item[\rm (ii)] if $\alpha,\mu\in{RLP(\Gamma)}$, then $[\alpha\beta^{-1}]\ {\mathcal{R}}\ [\mu\nu^{-1}]$ if and only if $\alpha = \mu$;
\item[\rm (iii)] if $\alpha\in{RLP(\Gamma)}$ and $\mu\in{RP(\Gamma)}\backslash RLP(\Gamma)$, then $([\alpha\beta^{-1}], [\mu\nu^{-1}])\not\in {\mathcal{R}}^*$.
 \end{enumerate}
\end{lemma}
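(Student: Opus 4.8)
The plan is to reduce each of the three statements to a comparison of idempotents, where the product formulae of Lemma~\ref{ES} and Lemma~\ref{ProductS} translate the Green's-type relations into the purely combinatorial condition that one reduced (linear) path is a prefix of another.

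For (i), I would start from Lemma~\ref{L*-class}, which gives $[\alpha\beta^{-1}]\ {\mathcal L}^*\ [\beta\beta^{-1}]$ and $[\mu\nu^{-1}]\ {\mathcal L}^*\ [\nu\nu^{-1}]$. Since ${\mathcal L}^*$ is an equivalence relation, $[\alpha\beta^{-1}]\ {\mathcal L}^*\ [\mu\nu^{-1}]$ holds iff $[\beta\beta^{-1}]\ {\mathcal L}^*\ [\nu\nu^{-1}]$. Both sides are idempotents, hence regular, so ${\mathcal L}^*$ coincides with ${\mathcal L}$ on them; and for idempotents $e,f$ one has $e\ {\mathcal L}\ f$ iff $ef=e$ and $fe=f$. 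Feeding $[\beta\beta^{-1}]$ and $[\nu\nu^{-1}]$ into the idempotent product of Lemma~\ref{ES} forces $\nu$ to be a prefix of $\beta$ and $\beta$ a prefix of $\nu$, whence $\beta=\nu$; the converse is immediate. The only point to watch is the case $\beta\in T^0$ or $\nu\in T^0$, where I would instead compute directly that $[A]\ {\mathcal L}\ [B]$ for $A,B\in T^0$ forces $A=B$, since $[AB]$ is already reduced when $A\neq B$ and so cannot equal $[A]$.

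For (ii), the argument is the ${\mathcal R}$-dual. With $\alpha,\mu\in RLP(\Gamma)$ both elements are regular by Lemma~\ref{regular}, and Lemma~\ref{regular LR} gives $[\alpha\alpha^{-1}]\ {\mathcal R}\ [\alpha\beta^{-1}]$ and $[\mu\mu^{-1}]\ {\mathcal R}\ [\mu\nu^{-1}]$. Hence $[\alpha\beta^{-1}]\ {\mathcal R}\ [\mu\nu^{-1}]$ iff $[\alpha\alpha^{-1}]\ {\mathcal R}\ [\mu\mu^{-1}]$, and for idempotents $e\ {\mathcal R}\ f$ iff $ef=f$ and $fe=e$. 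Again the product formula of Lemma~\ref{ES} turns these two equalities into ``$\alpha$ is a prefix of $\mu$'' and ``$\mu$ is a prefix of $\alpha$'', giving $\alpha=\mu$, with the $T^0$ subcase handled exactly as in (i).

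Part (iii) is where the real work lies, and I would argue by contradiction. Assume $[\alpha\beta^{-1}]\ {\mathcal R}^*\ [\mu\nu^{-1}]$. Since $\alpha\in RLP(\Gamma)$, $[\alpha\beta^{-1}]$ is regular (Lemma~\ref{regular}) and ${\mathcal R}$-related to the idempotent $e:=[\alpha\alpha^{-1}]$ (Lemma~\ref{regular LR}); as ${\mathcal R}\subseteq{\mathcal R}^*$ this yields $e\ {\mathcal R}^*\ [\mu\nu^{-1}]$. By the dual of Lemma~\ref{L*Def} this means $e[\mu\nu^{-1}]=[\mu\nu^{-1}]$ together with the left-cancellation clause: $x[\mu\nu^{-1}]=y[\mu\nu^{-1}]$ implies $xe=ye$ for all $x,y\in Q_\Gamma^1$. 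Running $e[\mu\nu^{-1}]=[\mu\nu^{-1}]$ through Lemma~\ref{ProductS} forces $\alpha$ to be a prefix of the maximal linear initial segment $\mu_1$ of $\mu$. Because $\mu\in RP(\Gamma)\setminus RLP(\Gamma)$, I can then write $\mu=\mu_1 t_{k+1}\cdots t_n$, where $P:=\mathbf{r}(t_k)=\mathbf{r}(\mu_1)$ and $Q:=\mathbf{s}(t_{k+1})$ satisfy $P\cap Q\neq\emptyset$ but $P\neq Q$, precisely the non-commutative configuration highlighted after Lemma~\ref{ES}. The crux, and the step I expect to be the main obstacle, is to manufacture from $P$ and $Q$ a pair $x,y\in Q_\Gamma^1$ that collapses on one of $e,[\mu\nu^{-1}]$ but separates on the other, contradicting the cancellation clause; the delicacy is that the obvious candidates (multiplying by the empty paths $P$ and $Q$, or by $[\mu_1 Q\mu_1^{-1}]$) tend to act identically on both elements, so the witnesses must be chosen to probe exactly the junction where the linearity of $\mu$ breaks, using Lemma~\ref{aa-1} to reduce $\mu_1^{-1}\mu_1$ to $P$ and the prefix analysis of Lemma~\ref{ProductS} to expose the mismatch $P\neq Q$.
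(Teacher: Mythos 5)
Parts (i) and (ii) of your proposal are essentially the paper's own argument: reduce via Lemma~\ref{L*-class} (resp.\ Lemma~\ref{regular LR}) to the idempotents $[\beta\beta^{-1}]$, $[\nu\nu^{-1}]$ (resp.\ $[\alpha\alpha^{-1}]$, $[\mu\mu^{-1}]$), and then read equality off the product formulae of Lemma~\ref{ES} and Lemma~\ref{ProductS}. One small omission: in (i) your separate treatment of the $T^0$ case only covers the situation where \emph{both} $\beta$ and $\nu$ lie in $T^0$; you must also rule out the mixed case ($\beta\in T^0$, $\nu\in RLP(\Gamma)\setminus T^0$, or vice versa), which the paper does by checking that $[\beta\beta^{-1}][\nu\nu^{-1}]\neq[\beta\beta^{-1}]$ there. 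This is a one-line repair.

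The genuine gap is in (iii), and you flag it yourself: after reducing to $e=[\alpha\alpha^{-1}]\ {\mathcal R}^*\ [\mu\nu^{-1}]$ and deducing that $\alpha$ is a linear prefix of $\mu$, you never actually produce the pair of left multipliers that violates the cancellation clause, so the proof of (iii) is incomplete. Moreover, you are probing in the wrong place: the separation does not come from the internal junction $P\neq Q$ of $\mu$ (as you note, multipliers that factor through the linear head of $\mu$ act identically on both elements). The paper instead takes two elements $[x_1y_1^{-1}]$ and $[x_2y_2^{-1}]$ where $y_1$ and $y_2$ are prefixes of $\alpha$ chosen so that $y_1$ is also a prefix of $\mu$ (say $y_1\eta=\mu$) while $y_2$ is \emph{not} prefix-comparable with $\mu$, and where $x_1,x_2$ are padded so that $x_1\xi_1=x_2\xi_2$ with $\alpha=y_i\xi_i$. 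Then by Lemma~\ref{ProductS} both multipliers send $[\alpha\beta^{-1}]$ to the same element $[x_1\xi_1\beta^{-1}]$, whereas $[x_1y_1^{-1}][\mu\nu^{-1}]=[x_1\eta\nu^{-1}]\neq[0]$ and $[x_2y_2^{-1}][\mu\nu^{-1}]=[0]$; the separating mechanism is ``zero versus nonzero'', exploiting incompatibility with $\mu$ rather than the break in linearity inside $\mu$. Be aware that even this construction is only sketched in the paper: the existence of a prefix $y_2$ of $\alpha$ that is not prefix-comparable with $\mu$ still has to be justified (it fails, for instance, when $\alpha$ is itself a word-prefix of $\mu$, where a different pair of witnesses is required), so simply importing the paper's witnesses would not by itself close your gap without that additional case analysis.
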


\begin{proof}
(i) By Lemma~\ref{L*-class}, we get $[\alpha\beta^{-1}]\ {\mathcal{L}}^*\ [\beta\beta^{-1}]$ and $[\mu\nu^{-1}]\ {\mathcal{L}}^*\ [\nu\nu^{-1}]$. Then $[\alpha\beta^{-1}]\ {\mathcal{L}}^*\ [\mu\nu^{-1}]$ if and only if  $[\beta\beta^{-1}]\ {\mathcal{L}}^*\ [\nu\nu^{-1}]$.

If $\beta = \mathbf{r}(\alpha) \in T^0$ and $\nu = \mathbf{r}(\mu) \in T^0$ then $[\beta\beta^{-1}]=[\beta]$ and $[\nu\nu^{-1}] = [\nu]$, and so $[\beta\beta^{-1}]\ {\mathcal{L}}^*\ [\nu\nu^{-1}]$ if and only if $[\beta] \ {\mathcal{L}}\ [\nu]$, if and only if $[\beta][\nu]=[\beta]$ and $[\nu][\beta]=[\nu]$, which implies that $\mathbf{r}(\nu) = \mathbf{r}(\beta)$, that is, $\nu = \beta$ as $\nu, \beta \in T^0$. Conversely, if $\nu = \beta$ we certainly get $[\alpha\beta^{-1}]\ {\mathcal{L}}^*\ [\mu\nu^{-1}]$.

If $\beta, \nu \in RLP(\Gamma)\setminus T^0$, by Lemma~\ref{ES} $[\beta\beta^{-1}][\nu\nu^{-1}] = [\nu\nu^{-1}][\beta\beta^{-1}]$ and so $[\beta\beta^{-1}]\ {\mathcal{L}}^*\ [\nu\nu^{-1}]$ if and only if $[\beta\beta^{-1}] = [\nu\nu^{-1}]$, that is, $\beta = \nu$.

If one of $\beta$ and $\nu$ is in $T^0$ and the other is in $RLP{\Gamma}\setminus T^0$  then $[\beta\beta^{-1}]\ {\mathcal{L}}^*\ [\nu\nu^{-1}]$ never happen. Since if $\beta \in T^0$ and $\nu \in RLP\setminus T^0$ then $[\beta\beta^{-1}], [\nu\nu^{-1}] \in E(Q_{\Gamma})$ and $[\beta\beta^{-1}] \neq [\beta\beta][\nu\nu^{-1}]$, so $([\beta\beta^{-1}], [\nu\nu^{-1}]) \notin {\mathcal{L}}^*$.

(ii) For all $\alpha,\mu\in{RLP(\Gamma)}$, we have $[\alpha\beta^{-1}]$ and $[\mu\nu^{-1}]$ are regular by Lemma~\ref{regular}. It follows from  Lemma~\ref{regular LR} that $[\alpha\beta^{-1}]\ {\mathcal{R}}\ [\alpha\alpha^{-1}]$ and $[\mu\nu^{-1}]\ {\mathcal{R}}\ [\mu\mu^{-1}]$. So $[\alpha\beta^{-1}]\ {\mathcal{R}}\ [\mu\nu^{-1}]$ if and only if $[\alpha\alpha^{-1}]\ {\mathcal{R}}\ [\mu\mu^{-1}]$, if and only if $\alpha=\mu$ as the similar proof of part (i).

(iii) Suppose that $\alpha\in{RLP(\Gamma)}$ and $\mu\in{RP({\Gamma})}\backslash{RLP(\Gamma)}$. Let $[x_1y_1^{-1}]$, $[x_2y_2^{-1}] \in Q_{\Gamma}$ be such that
\[\alpha= y_1\xi_1= y_2\xi_2, \ x_1\xi_1 = x_2\xi_2,\ y_1\eta = \mu, \]
and $y_2$ and $\mu$ are not prefix comparable, where $x_1y_1^{-1}$ and $x_2y_2^{-1}$ are in right normal form, $\xi \in RLP(\Gamma)$, $\eta \in RP(\Gamma)\setminus RLP(\Gamma)$. Then
\[\ [x_1y_1^{-1}][\alpha\beta^{-1}]=[x_1\xi_1\beta^{-1}]=[x_2\xi_2\beta^{-1}]=[x_2y_2^{-1}][\alpha\beta^{-1}]\]
 but also we get
 \[\ [x_1y_1^{-1}][\mu\nu^{-1}] =[x_1\eta\nu^{-1}]\ne 0=[x_2y_2^{-1}][\mu\nu^{-1}].\]
  Hence $([\alpha\beta^{-1}],[\mu\nu^{-1}])\not\in{\mathcal{R}}^*$.
\end{proof}

It follows from Lemma~\ref{LR*-class} {\rm (iii)} that not all ${\mathcal{R}}^*$-classes of $Q_{\Gamma}$ have an idempotent, and so $Q_{\Gamma}$ is not left abundant. We also obtain that each ${\mathcal{L}}^*$-class of $Q_{\Gamma}$ has a unique idempotent by Lemma~\ref{LR*-class} {\rm (i)}. Then we have:

\begin{them}
The semigroup $Q_{\Gamma}$ is a right $*$-abundant semigroup with a zero.
\end{them}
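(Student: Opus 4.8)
The plan is to verify directly the three requirements contained in the definition of a right $*$-abundant semigroup with zero: that $Q_{\Gamma}$ possesses a zero, that every ${\mathcal{L}}^*$-class contains an idempotent (so that $Q_{\Gamma}$ is right abundant), and that this idempotent is unique. All three follow quickly from the structural results already established, so the argument is essentially an assembly of Lemmas~\ref{ES}, \ref{L*-class} and \ref{LR*-class}, together with the normal-form description in Corollary~\ref{uniqueNF}.

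First I would observe that $Q_{\Gamma}$ has a zero element $[0]$: this is immediate from relation (NR6), which gives $[0][x] = [0] = [x][0]$ for every generator $x \in X$, and hence $[0][w] = [0] = [w][0]$ for all $[w] \in Q_{\Gamma}$. Next, for right abundance I would take an arbitrary element of $Q_{\Gamma}$ and exhibit an idempotent in its ${\mathcal{L}}^*$-class. The zero $[0]$ is itself idempotent and forms its own ${\mathcal{L}}^*$-class. For a nonzero element, Corollary~\ref{uniqueNF} allows me to assume it is written in right normal form as $[\alpha\beta^{-1}]$ with $\alpha \in RP(\Gamma)$, $\beta \in RLP(\Gamma)$ and $\mathbf{r}(\alpha) = \mathbf{r}(\beta)$. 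By Lemma~\ref{L*-class} we have $[\alpha\beta^{-1}]\ {\mathcal{L}}^*\ [\beta\beta^{-1}]$, and since $\beta \in RLP(\Gamma)$, Lemma~\ref{ES} guarantees $[\beta\beta^{-1}] \in E(Q_{\Gamma})$. Thus every ${\mathcal{L}}^*$-class meets $E(Q_{\Gamma})$, so $Q_{\Gamma}$ is right abundant.

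Finally, for uniqueness I would suppose that two idempotents lie in a common ${\mathcal{L}}^*$-class. By Lemma~\ref{ES} every idempotent is either $[0]$ or of the form $[\gamma\gamma^{-1}]$ with $\gamma \in RLP(\Gamma)$; since $[0]$ occupies its own ${\mathcal{L}}^*$-class, it remains to treat two nonzero idempotents $[\alpha\alpha^{-1}]$ and $[\beta\beta^{-1}]$. Writing each in right normal form, so that the inverse part is $\alpha$ and $\beta$ respectively, Lemma~\ref{LR*-class}{\rm (i)} yields $[\alpha\alpha^{-1}]\ {\mathcal{L}}^*\ [\beta\beta^{-1}]$ if and only if $\alpha = \beta$, that is, if and only if the two idempotents coincide. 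Hence each ${\mathcal{L}}^*$-class contains exactly one idempotent, and combining this with the previous paragraph shows that $Q_{\Gamma}$ is a right $*$-abundant semigroup with zero.

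I do not expect any genuine obstacle here, since the essential analytic content has already been absorbed into the preceding lemmas. The only point demanding care is the bookkeeping in the uniqueness step: one must correctly match the index conventions of Lemma~\ref{LR*-class}{\rm (i)} (where the equality governing the ${\mathcal{L}}^*$-relation is between the inverse parts $\beta$ and $\nu$) to the idempotents $[\alpha\alpha^{-1}]$ and $[\beta\beta^{-1}]$, whose inverse parts are $\alpha$ and $\beta$. Once this identification is made, the conclusion is immediate.
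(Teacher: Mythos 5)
Your proposal is correct and follows essentially the same route as the paper, which deduces the theorem directly from Lemma~\ref{L*-class} (every nonzero $[\alpha\beta^{-1}]$ is ${\mathcal{L}}^*$-related to the idempotent $[\beta\beta^{-1}]$) and Lemma~\ref{LR*-class}(i) (uniqueness of the idempotent in each ${\mathcal{L}}^*$-class). Your write-up merely makes explicit the bookkeeping about $[0]$ and the right normal forms that the paper leaves implicit.
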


Put
\[S_{\Gamma} = \{[\alpha\beta^{-1}] \in Q_{\Gamma}: \alpha \in PR(\Gamma),\ \beta \in RLP(\Gamma)\setminus T^0,\ \mathbf{r}(\alpha) = \mathbf{r}(\beta)\} \cup \{[0]\}.\]
Clearly, the subsemilattice
 $$E=E(Q_{\Gamma})\setminus \{[A]: A \in T^0\}=\{[\alpha\alpha^{-1}]: \alpha \in RLP(\Gamma)\setminus T^0\} \cup \{0\}$$
 is the set of all idempotents of $S_{\Gamma}$. It is easy to see that $S_{\Gamma}$ forms a subsemigroup of $Q_{\Gamma}$ by Lemma~\ref{ProductS}. Since for all $[\alpha\beta^{-1}] \in S_{\Gamma}$, $\beta \in RLP(\Gamma)\setminus T^0$, the product in $S_{\Gamma}$ can be modified as follows: for all $[\alpha\beta^{-1}], [\mu\nu^{-1}] \in S_{\Gamma}$,
$$[\alpha\beta^{-1}][\mu\nu^{-1}] =
\begin{cases}
    {[\alpha\xi\nu^{-1}]} & \mbox{if}\  \beta \in RLP(\Gamma)\setminus T^0, \mu=\beta\xi\ \mbox{for\ some}\ \xi\in{RP(\Gamma)}\\
	{[\alpha(\nu\eta)^{-1}]} & \mbox{if}\ \beta \in RLP(\Gamma)\setminus T^0, \beta=\mu\eta\ \mbox{for\ some}\ \eta\in{RP(\Gamma)}\\
	[0] & \mbox{otherwise}.
\end{cases}$$

\begin{prop}\label{SG}
The semigroup $S_\Gamma$ is a right ample semigroup with a zero element.
\end{prop}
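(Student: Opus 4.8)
The plan is to verify directly the three defining properties of a right ample semigroup for $S_\Gamma$: that it is right abundant, that its idempotents commute, and that the identity $ea = a(ea)^*$ holds for all $a \in S_\Gamma$ and $e \in E(S_\Gamma)$. Commutativity of idempotents is already in hand, since Lemma~\ref{ES} shows that $E = E(S_\Gamma)$ is a semilattice; by the remark following the definition of right ample semigroups, commuting idempotents force each $\mathcal{L}^*$-class to meet $E(S_\Gamma)$ in at most one idempotent, so I will not argue uniqueness separately.

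For right abundance I would take a nonzero $a = [\alpha\beta^{-1}] \in S_\Gamma$, so that $\beta \in RLP(\Gamma)\setminus T^0$ and hence $[\beta\beta^{-1}] \in E$. By Lemma~\ref{L*-class} we have $a \,\mathcal{L}^*\, [\beta\beta^{-1}]$ in $Q_\Gamma$. Since $S_\Gamma^1 \subseteq Q_\Gamma^1$, the defining condition of $\mathcal{L}^*$ quantified over $Q_\Gamma^1$ is at least as strong as the one quantified over $S_\Gamma^1$; thus $\mathcal{L}^*$ computed in $Q_\Gamma$ restricts into $\mathcal{L}^*$ computed in $S_\Gamma$, and $a \,\mathcal{L}^*\, [\beta\beta^{-1}]$ holds in $S_\Gamma$ as well. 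Hence every $\mathcal{L}^*$-class of $S_\Gamma$ contains an idempotent (the class of $[0]$ containing $[0]$), so $S_\Gamma$ is right abundant, and moreover $a^* = [\beta\beta^{-1}]$ for each $a = [\alpha\beta^{-1}]$.

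The substance of the proof is the identity $ea = a(ea)^*$, which I would settle by a case analysis using the modified product of $S_\Gamma$ together with Lemma~\ref{ProductS} and the description of $^*$ above. Fix $e = [\gamma\gamma^{-1}]$ with $\gamma \in RLP(\Gamma)\setminus T^0$ and $a = [\alpha\beta^{-1}]$ (the cases $e = [0]$ or $a = [0]$ being immediate from $0$ being a zero and $0^* = 0$). The product $ea = [\gamma\gamma^{-1}][\alpha\beta^{-1}]$ has three possibilities. If $\alpha = \gamma\xi$ for some $\xi \in RP(\Gamma)$, then $ea = [\gamma\xi\beta^{-1}] = a$, so $(ea)^* = a^* = [\beta\beta^{-1}]$ and $a(ea)^* = a\,a^* = a = ea$. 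If $ea = [0]$, then $a(ea)^* = a\,[0] = [0] = ea$. The remaining branch, where $\gamma = \alpha\eta$ with $\eta \in RP(\Gamma)$, is the crux: here $\alpha,\eta \in RLP(\Gamma)$ by Lemma~\ref{RLP_Property} and $\mathbf{r}(\beta) = \mathbf{r}(\alpha) = \mathbf{s}(\eta)$, so $\beta\eta \in RLP(\Gamma)$ (by Lemma~\ref{LP_Property} and the fact that a concatenation of reduced linear paths meeting only through $T$-letters stays reduced), whence $ea = [\gamma(\beta\eta)^{-1}]$. Then $(ea)^* = [(\beta\eta)(\beta\eta)^{-1}]$ by Lemma~\ref{L*-class}, and recomputing $a(ea)^* = [\alpha\beta^{-1}][(\beta\eta)(\beta\eta)^{-1}]$ through the first nonzero branch of the product (writing $\beta\eta = \beta\cdot\eta$) yields $[\alpha\eta(\beta\eta)^{-1}] = [\gamma(\beta\eta)^{-1}] = ea$, using $\alpha\eta = \gamma$. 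This closes all cases and establishes the ample condition.

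The main obstacle I anticipate is precisely this last branch: one must read off both $ea$ and $(ea)^*$ correctly from the right-normal-form product, and then check that multiplying $a$ by the idempotent $(ea)^*$ rebuilds the truncated prefix $\gamma = \alpha\eta$. A secondary point requiring care is confirming at each juncture that concatenations such as $\beta\eta$ remain reduced linear paths, so that the computed expressions are genuine right normal forms; this follows from $\beta,\eta \in RLP(\Gamma)$, $\mathbf{r}(\beta) = \mathbf{s}(\eta)$, and Lemma~\ref{RLP_Property}.
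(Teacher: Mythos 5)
Your proposal is correct and follows essentially the same route as the paper: idempotent commutativity from Lemma~\ref{ES}, right abundance via the idempotent $[\beta\beta^{-1}]$ from Lemma~\ref{L*-class}, and the same three-case verification of $ea=a(ea)^*$ using the product formula, with the $\gamma=\alpha\eta$ branch handled by exactly the paper's computation $[\alpha\beta^{-1}][(\beta\eta)(\beta\eta)^{-1}]=[\alpha\eta(\beta\eta)^{-1}]=ea$. Your explicit remark that ${\mathcal{L}}^*$ computed in $Q_\Gamma$ restricts to ${\mathcal{L}}^*$ in the subsemigroup $S_\Gamma$ is a small but welcome addition that the paper leaves implicit.
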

\begin{proof}
According to Lemma~\ref{ES} and Lemma~\ref{L*-class} it is sufficient to show that for all $[\xi\xi^{-1}]\in{E}$ and $[\alpha\beta^{-1}]\in{S_\Gamma} \setminus \{[0]\}$,  $[\xi\xi^{-1}][\alpha\beta^{-1}]=[\alpha\beta^{-1}]([\xi\xi^{-1}][\alpha\beta^{-1}])^*$. Notice that
$$
\left.
\begin{array}{ll}
{[\xi\xi^{-1}][\alpha\beta^{-1}]}
		&= \begin{cases}
		{[\alpha\beta^{-1}]} & \mbox{if}\ \alpha=\xi\mu \ \mbox{for\ some}\ \xi\in{RP(\Gamma)}\\
		{[\xi(\beta\eta)^{-1}]} & \mbox{if}\ \xi=\alpha\eta \ \mbox{for\ some}\ \eta\in{RP(\Gamma)}\\
		[0] & \mbox{otherwise}.
	\end{cases}\\
\end{array}
\right.
$$
Then
$$([\xi\xi^{-1}][\alpha\beta^{-1}])^* =
\begin{cases}
	{[\beta\beta^{-1} ]}& \mbox{if}\ \alpha=\xi\mu \ \mbox{for\ some}\ \xi\in{RP(\Gamma)}\\
	{[(\beta\eta)(\beta\eta)^{-1}]} & \mbox{if}\ \xi=\alpha\eta \ \mbox{for\ some}\ \eta\in{RP(\Gamma)}\\
	[0] & \mbox{otherwise}.
\end{cases}$$
And so
$$
\left.
\begin{array}{ll}
	&{[\alpha\beta^{-1}]([\xi\xi^{-1}][\alpha\beta^{-1}])^*}\\
	&= \begin{cases}
		{[\alpha\beta^{-1}][\beta\beta^{-1}]} & \mbox{if}\ \alpha=\xi\mu \ \mbox{for\ some}\ \xi\in{RP(\Gamma)}\\
		{[\alpha\beta^{-1}][\beta\eta(\beta\eta)^{-1}]} & \mbox{if}\ \xi=\alpha\eta \ \mbox{for\ some}\ \eta\in{RP(\Gamma)}\\
		[0] & \mbox{otherwise}
	\end{cases}\\
	&= \begin{cases}
		{[\alpha\beta^{-1}]} & \mbox{if}\ \alpha=\xi\mu \ \mbox{for\ some}\ \xi\in{RP(\Gamma)}\\
		{[\xi(\beta\eta)^{-1}]} & \mbox{if}\ \xi=\alpha\eta \ \mbox{for\ some}\ \eta\in{RP(\Gamma)}\\
		[0] & \mbox{otherwise}.
	\end{cases}\\
\end{array}
\right.
$$
Thus $[\xi\xi^{-1}][\alpha\beta^{-1}]=[\alpha\beta^{-1}]([\xi\xi^{-1}][\alpha\beta^{-1}])^*$. Hence $S_{\Gamma}$ is a right ample semigroup with a zero element.
\end{proof}

According to Lemma~\ref{regular} and Lemma~\ref{regular LR} we obtain that if $\alpha, \beta \in RLP(\Gamma)$ and $\mathbf{r}(\alpha) = \mathbf{r}(\beta)$ then $[\alpha\beta^{-1}]$ is regular. Let $R_{\Gamma}$ be the subset of  $S_{\Gamma}$ as follows:
\[R_{\Gamma} = \{[\alpha\beta^{-1}] \in S_{\Gamma}: \alpha, \beta \in RLP(\Gamma)\setminus T^0 \mbox{and}\ \mathbf{r}(\alpha) =\mathbf{r}(\beta)\} \cup \{[0]\}.\] Then $E \subseteq R_{\Gamma}$. It follows from Lemma~\ref{inverse} that $R_{\Gamma}$ is an inverse subsemigroup of $S_{\Gamma}$. By Lemma~\ref{LR*-class} we obtain that $\mathcal{H}$ is trivial in $R_{\Gamma}$, where ${\mathcal{H}}={\mathcal{L}}\cap {\mathcal{R}}$.  A semigroup is fundamental if and only if
the only congruence contained in Green's equivalence $\mathcal{H}$ is the identity congruence~\cite{Munn}.

\begin{corollary}\label{R_Gamma}
The semigroup $R_{\Gamma}$ is a fundamental inverse subsemigroup  of $S_{\Gamma}$.
\end{corollary}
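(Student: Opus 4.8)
The plan is to deduce the statement directly from the structural facts already assembled just before the corollary, the decisive tool being Munn's characterisation of fundamentality quoted in the text. Recall that $R_{\Gamma}$ is already known to be an inverse subsemigroup of $S_{\Gamma}$: by Lemma~\ref{regular} and Lemma~\ref{regular LR} the set $R_{\Gamma}\setminus\{[0]\}$ is precisely the set of regular elements of the right ample semigroup $S_{\Gamma}$, so Lemma~\ref{inverse} applies and exhibits $R_{\Gamma}$ as an inverse subsemigroup. Consequently the only thing left to establish is that $R_{\Gamma}$ is fundamental.

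By the criterion cited above, it suffices to show that the only congruence on $R_{\Gamma}$ contained in Green's relation $\mathcal{H}$ is the identity congruence $1_{R_{\Gamma}}$. The key observation, which I would state as the heart of the proof, is that $\mathcal{H}$ is trivial on $R_{\Gamma}$, that is $\mathcal{H}=1_{R_{\Gamma}}$. Granting this, any congruence $\rho$ with $\rho\subseteq\mathcal{H}=1_{R_{\Gamma}}$ is forced to equal $1_{R_{\Gamma}}$, since the identity relation has no proper subrelations other than itself; Munn's characterisation then yields immediately that $R_{\Gamma}$ is fundamental.

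It therefore remains to justify the triviality of $\mathcal{H}$ on $R_{\Gamma}$, which is where the actual content sits. Because $R_{\Gamma}$ is an inverse semigroup, I may compute Green's relations intrinsically: for $a=[\alpha\beta^{-1}]$ with $\alpha,\beta\in RLP(\Gamma)\setminus T^{0}$ and $\mathbf{r}(\alpha)=\mathbf{r}(\beta)$, one has $aa^{-1}=[\alpha\alpha^{-1}]$ and $a^{-1}a=[\beta\beta^{-1}]$, so two such elements are $\mathcal{H}$-related exactly when $[\alpha\alpha^{-1}]=[\mu\mu^{-1}]$ and $[\beta\beta^{-1}]=[\nu\nu^{-1}]$. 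Since every element of $R_{\Gamma}$ is regular the starred relations of Lemma~\ref{LR*-class} collapse to the ordinary Green's relations, and parts (i) and (ii) of that lemma give $\mathcal{L}$ is determined by $\beta=\nu$ and $\mathcal{R}$ by $\alpha=\mu$; intersecting these forces $\alpha=\mu$ and $\beta=\nu$, hence the two elements coincide. Together with the fact that $[0]$ is $\mathcal{H}$-related only to itself, this shows $\mathcal{H}=1_{R_{\Gamma}}$. The one point demanding care, and the closest thing to an obstacle, is to confirm that these relations are being read off inside $R_{\Gamma}$ rather than in the ambient $Q_{\Gamma}$; this is harmless precisely because in an inverse semigroup the classes of $\mathcal{L}$ and $\mathcal{R}$ are pinned down by the local data $a^{-1}a$ and $aa^{-1}$, which are computed identically in $R_{\Gamma}$ and in $Q_{\Gamma}$, and the map $\alpha\mapsto[\alpha\alpha^{-1}]$ is injective on $RLP(\Gamma)\setminus T^{0}$.
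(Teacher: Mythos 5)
Your argument is correct and follows the paper's own route exactly: Lemma~\ref{inverse} gives that $R_{\Gamma}$ is an inverse subsemigroup, Lemma~\ref{LR*-class} (i) and (ii) give that $\mathcal{H}$ is trivial on $R_{\Gamma}$, and Munn's criterion then yields fundamentality. Your extra remarks — that triviality of $\mathcal{H}$ forces any congruence contained in it to be the identity, and that the Green's relations restrict correctly from $Q_{\Gamma}$ to $R_{\Gamma}$ because they are determined by $aa^{-1}$ and $a^{-1}a$ — are exactly the details the paper leaves implicit.
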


\section{Congruence-free}\label{sec:congruence}

The aim of this section is to consider the congruence-free conditions  on $Q_\Gamma$.

We begin with the notion of congruence-free.  A semigroup is said to be {\it congruence-free} if it has only two congruences, the identity congruence and the universal congruence.

\begin{lemma}\label{idempotent-separating}
Let $I= \{[\alpha\beta^{-1}]\in Q_{\Gamma}: \alpha \in RP(\Gamma)\setminus RLP(\Gamma)\} \cup \{[0]\}$. Then $I$ is a proper ideal of $Q_{\Gamma}$. Further $\rho_I = (I \times I) \cup 1_{Q_{\Gamma}}$ is an idempotent-separating congruence on $Q_{\Gamma}$.
\end{lemma}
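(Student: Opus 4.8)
The plan is to first prove that $I$ is a proper ideal and then to observe that $\rho_I$ is exactly the Rees congruence determined by $I$, from which the congruence property and idempotent-separation follow quickly. By Lemma~\ref{regular}, $I$ is precisely the set of non-regular elements of $Q_\Gamma$ together with $[0]$; equivalently, $I = Q_\Gamma \setminus \{[\alpha\beta^{-1}] : \alpha \in RLP(\Gamma)\}$. Since $[A] \notin I$ for every $A \in T^0$ while $[0] \in I$, the set $I$ is non-empty and proper, so the real work is to show that $I$ is closed under multiplication on both sides.

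The heart of the argument is a combinatorial claim about reduced paths. First I would record the characterization that a reduced path $\gamma = t_1\cdots t_n$ lies in $RP(\Gamma)\setminus RLP(\Gamma)$ if and only if it has a \emph{mismatched junction}, that is, an index $i$ with $\mathbf{r}(t_i)\cap \mathbf{s}(t_{i+1}) \neq \emptyset$ but $\mathbf{r}(t_i) \neq \mathbf{s}(t_{i+1})$; indeed, for a reduced path the presence of an empty-path letter forces an adjacent mismatch, and conversely, all-matching junctions together with reducedness force every letter into $T$. The key lemma is then: if $p = qr$ is a nonzero path with $q, r \in RP(\Gamma)$, and either $q$ or $r$ is non-linear or $\mathbf{r}(q) \neq \mathbf{s}(r)$, then the reduced form of $p$ is again non-linear. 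I expect this lemma to be the main obstacle.

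To prove it, recall (from the discussion preceding Lemma~\ref{ReducedPath}) that reducing a nonzero path uses only (NR1), i.e. deletion of empty-path letters, and preserves source and range. Since $q$ and $r$ are already reduced, the only reducible site of $qr$ is its central junction, and any reduction there can only cascade by deleting empty letters alternately from the end of $q$ and the start of $r$, never crossing a letter of $T$. A mismatched junction $\mathbf{r}(t_i) \neq \mathbf{s}(t_{i+1})$ is itself never reducible, since $t_i = \mathbf{s}(t_{i+1})$ or $t_{i+1} = \mathbf{r}(t_i)$ would force equality. If the mismatch lies at the central junction (the case $\mathbf{r}(q) \neq \mathbf{s}(r)$) we are done at once, as $qr$ is then already reduced. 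If $q$ is non-linear, its leftmost mismatch sits strictly inside $q$; using that each (NR1)-deletion preserves the source of the right-hand neighbour it exposes, one checks that the left endpoint of this mismatch is never deleted and its right neighbour always retains the same source, so the mismatch survives into the reduced form. The case where $r$ is non-linear is dual.

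With the key lemma in hand, closure of $I$ follows by inspecting each branch of Lemma~\ref{ProductS}. For a right product $[\alpha\beta^{-1}][\mu\nu^{-1}]$ with $\alpha \in RP(\Gamma)\setminus RLP(\Gamma)$, the non-zero outputs have numerators obtained by reducing $\alpha\mu$ or $\alpha\xi$, or equal to $\alpha$ itself with only the denominator changing; in each case the numerator is non-linear by the key lemma, so the product lies in $I$, while the remaining branch gives $[0] \in I$. For a left product $[\mu\nu^{-1}][\alpha\beta^{-1}]$, the branch $\nu = \alpha\eta$ cannot occur, since a prefix of the linear path $\nu$ is linear by Lemma~\ref{RLP_Property} whereas $\alpha$ is not; the remaining non-zero branches reduce $\mu\alpha$ or $\mu\xi$, and in the latter the non-linearity of $\alpha = \nu\xi$ either lies inside $\xi$ or yields the boundary mismatch $\mathbf{r}(\mu) = \mathbf{r}(\nu) \neq \mathbf{s}(\xi)$, so the key lemma again places the product in $I$. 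Hence $I$ is a proper ideal. Finally, $\rho_I = (I\times I)\cup 1_{Q_\Gamma}$ is the Rees congruence of the ideal $I$ and is therefore a congruence; and by Lemma~\ref{ES} the only idempotent contained in $I$ is $[0]$, so no two distinct idempotents are $\rho_I$-related, giving that $\rho_I$ is idempotent-separating.
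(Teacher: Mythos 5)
Your proof is correct and takes essentially the same route as the paper's: closure of $I$ is checked through the case analysis of Lemma~\ref{ProductS} (with the branch $\nu=\alpha\eta$ ruled out via Lemma~\ref{RLP_Property}), properness is immediate, and idempotent-separation follows from Lemma~\ref{ES} since $[0]$ is the only idempotent lying in $I$. Your explicit ``key lemma'' on reduction cascades merely makes rigorous a point the paper asserts without detailed justification (namely that concatenations such as $\mu\alpha$ or $\mu\xi$ still have non-linear reduced forms), so it is a refinement of the same argument rather than a different approach.
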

\begin{proof}
It is immediate to get that $I$ is closed with respect to the binary operation in $Q_{\Gamma}$ by Lemma~\ref{ProductS} and $Q_{\Gamma}\setminus I \neq \emptyset$. Suppose that $[\alpha\beta^{-1}] \in I$ and  $[\mu\nu^{-1}] \in Q_{\Gamma}\setminus I$. Since $[\alpha\beta^{-1}] \in I$ it follows that $\alpha \in RP(\Gamma)\setminus RLP(\Gamma)$. Again by Lemma~\ref{ProductS} we have that $[\alpha\beta^{-1}][\mu\nu^{-1}] \in I$. Now we show that $[\mu\nu^{-1}][\alpha\beta^{-1}] \in I$.  As $\alpha \in RP(\Gamma)\setminus RLP(\Gamma)$ and  $\nu \in RLP(\Gamma)$  we can not have $ \nu=\alpha\eta$ for some $\eta \in RP(\Gamma)$.  Since if  $ \nu=\alpha\eta$ we get $\alpha \in RLP(\Gamma)$ by Lemma~\ref{RLP_Property}, a contradiction.  It follows from Lemma~\ref{ProductS} that
$$[\mu\nu^{-1}][\alpha\beta^{-1}]=
\begin{cases}
    [\mu\alpha\beta^{-1}] & \mbox{if}\ \nu = \mathbf{r}(\mu)\ \mbox{and}\ \mathbf{r}(\mu) \cap \mathbf{s}(\alpha) \neq \emptyset\\
	{[\mu\xi\nu^{-1}]} & \mbox{if}\  \alpha=\nu\xi\ \mbox{for\ some}\ \xi\in{RP(\Gamma)}\\
	[0] & \mbox{otherwise}.
\end{cases}$$

If $\nu = \mathbf{r}(\mu)$ and $\mathbf{r}(\mu) \cap \mathbf{s}(\alpha) \neq \emptyset$ then $\mu\alpha \in RP(\Gamma)\setminus RLP(\Gamma)$ since $\alpha \in RP(\Gamma)\setminus RLP(\Gamma)$ and so $[\mu\alpha\beta^{-1}] \in I$.  If $\alpha=\nu\xi$ for some $\xi\in{RP(\Gamma)}$ then either ${\mathbf{r}}(\nu) \neq {\mathbf{s}}(\xi)$ and  ${\mathbf{r}}(\nu) \cap {\mathbf{s}}(\xi)\neq \emptyset$ or $\mathbf{r}(\nu) = \mathbf{s}(\xi)$  and $\xi \in RP(\Gamma)\setminus RLP(\Gamma)$. In the former, we get $\mathbf{r}(\mu) \neq \mathbf{s}(\xi)$ and $\mathbf{r}(\mu) \cap \mathbf{s}(\xi)\neq \emptyset$ as $\mathbf{r}(\mu) = \mathbf{r}(\nu)$. And so $\mu\xi \in RP(\Gamma)\setminus RLP(\Gamma)$, which follows that $[\mu\xi\nu^{-1}] \in I$. In the latter, we certainly get $\mu\xi \in RP(\Gamma)\setminus RLP(\Gamma)$, which follows that $[\mu\xi\nu^{-1}] \in I$. Notice that $[0] \in I$. Hence $I$ is a proper ideal of $Q_{\Gamma}$.

It follows from Lemma~\ref{ES} that $I$ contains only one idempotent, that is, $[0]$. Consequently, $\rho_I$ is an idempotent-separating congruence on $Q_{\Gamma}$.
\end{proof}

We also can construct a proper ideal of $S_{\Gamma}$ as follows. Its proof is similar to Lemma~\ref{idempotent-separating} so we omit it.

\begin{lemma}\label{idempotent-separating2}
Let $I= \{[\alpha\beta^{-1}]\in S_{\Gamma}: \alpha \in RP(\Gamma)\setminus RLP(\Gamma)\} \cup \{[0]\}$. Then $I$ is a proper ideal of $S_{\Gamma}$. Further $\rho_I = (I \times I) \cup 1_{S_{\Gamma}}$ is an idempotent-separating congruence on $S_{\Gamma}$.
\end{lemma}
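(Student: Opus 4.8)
The plan is to mirror the proof of Lemma~\ref{idempotent-separating}, taking advantage of the fact that inside $S_{\Gamma}$ multiplication is governed by the two-case product displayed just before Proposition~\ref{SG}, so that the case $\beta = \mathbf{r}(\alpha)$, which required a separate analysis for $Q_{\Gamma}$, no longer occurs (it needs $\beta \in T^0$, excluded in $S_{\Gamma}$). First I would note that $I \subseteq S_{\Gamma}$ by construction and that $I$ is proper: any idempotent $[\alpha\alpha^{-1}]\in E$ has linear numerator, so (for instance taking $\alpha = t \in T$) $[tt^{-1}] \in S_{\Gamma}\setminus I$.

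The core is to show $I$ is an ideal, i.e. that non-linearity of the left-hand component survives both left and right multiplication. For right multiplication, take $[\alpha\beta^{-1}]\in I\setminus\{[0]\}$, so $\alpha \in RP(\Gamma)\setminus RLP(\Gamma)$, and any $[\mu\nu^{-1}]\in S_{\Gamma}$. The nonzero outcomes are $[\alpha\xi\nu^{-1}]$ (when $\mu=\beta\xi$) and $[\alpha(\nu\eta)^{-1}]$ (when $\beta=\mu\eta$); in both the numerator has $\alpha$ as a prefix, and since $\alpha$ already contains a step $t_it_{i+1}$ with $\mathbf{r}(t_i)\neq\mathbf{s}(t_{i+1})$, that step persists, so the numerator stays in $RP(\Gamma)\setminus RLP(\Gamma)$ and the product lies in $I$.

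The delicate part is left multiplication $[\mu\nu^{-1}][\alpha\beta^{-1}]$ with $[\mu\nu^{-1}]\in S_{\Gamma}$ and $[\alpha\beta^{-1}]\in I\setminus\{[0]\}$. Here $\nu \in RLP(\Gamma)\setminus T^0$ is linear while $\alpha$ is not, so the case $\nu = \alpha\eta$ cannot arise, since it would force $\alpha \in RLP(\Gamma)$ by Lemma~\ref{RLP_Property}; the only nonzero possibility is $\alpha = \nu\xi$, with outcome $[\mu\xi\beta^{-1}]$. The obstacle I expect is verifying $\mu\xi \in RP(\Gamma)\setminus RLP(\Gamma)$. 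Since $\nu$ is linear and $\alpha = \nu\xi$ is not, the witnessing non-linear step lies either at the junction $\mathbf{r}(\nu)\neq\mathbf{s}(\xi)$ or strictly inside $\xi$. Using $\mathbf{r}(\mu)=\mathbf{r}(\nu)$ from the right normal form, the junction of $\mu$ with $\xi$ coincides with that of $\nu$ with $\xi$; hence in the first subcase $\mu\xi$ is non-linear at its junction, and in the second subcase $\mu\xi$ inherits the non-linear step of its suffix $\xi$. Either way $[\mu\xi\beta^{-1}]\in I$, completing the proof that $I$ is a proper ideal.

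Finally, $\rho_I = (I\times I)\cup 1_{S_{\Gamma}}$ is the Rees congruence of the ideal $I$, hence a congruence. To see it is idempotent-separating I would invoke Lemma~\ref{ES}: the idempotents of $S_{\Gamma}$ are exactly $E = \{[\alpha\alpha^{-1}]: \alpha \in RLP(\Gamma)\setminus T^0\}\cup\{[0]\}$, and every nonzero member has a linear numerator and so lies outside $I$. Thus $[0]$ is the unique idempotent of $I$, so any two $\rho_I$-related idempotents are either already equal or both equal to $[0]$, giving idempotent-separation.
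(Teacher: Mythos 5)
Your proof is correct and follows essentially the same route the paper intends: the paper omits the proof of this lemma, stating it is similar to that of Lemma~\ref{idempotent-separating}, and your argument is exactly that adaptation, with the correct observation that the case $\beta=\mathbf{r}(\alpha)$ drops out in $S_{\Gamma}$ and that the nonzero left product $[\mu\xi\beta^{-1}]$ inherits a non-linear junction either at $\mathbf{r}(\mu)=\mathbf{r}(\nu)\neq\mathbf{s}(\xi)$ or inside $\xi$.
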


According to Lemma~\ref{regular}, Lemma~\ref{idempotent-separating} and Lemma~\ref{idempotent-separating2} $Q_{\Gamma}$ and $S_{\Gamma}$ are never  congruence-free if they have non-regular elements. Notice that if  $t$ is a relation  in $\Gamma$ with $\mid\mathbf{s}(t)\mid >1$ and $v \in \mathbf{s}(t)$, then we get $vt \in RP(\Gamma)\setminus RLP(\Gamma)$ and so $[vtt^{-1}] \in S_{\Gamma} \subseteq Q_{\Gamma}$ is a non-regular element. Now we have:

 \begin{them}
 If $\Gamma = (V, T, \mathbf{s}, \mathbf{r})$ is a  network and there exists $t \in T$ with $\mid\mathbf{s}(t)\mid >1$, then $Q_{\Gamma}$ and $S_{\Gamma}$ are not  congruence-free as a semigroup.
 \end{them}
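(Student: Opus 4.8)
The plan is to exhibit the idempotent-separating congruence $\rho_I$ from Lemma~\ref{idempotent-separating} (for $Q_{\Gamma}$) and Lemma~\ref{idempotent-separating2} (for $S_{\Gamma}$) as an explicit witness that neither semigroup is congruence-free. The hypothesis $\mid\mathbf{s}(t)\mid > 1$ is used only to guarantee that the ideal $I$ contains a non-regular element, hence is strictly larger than $\{[0]\}$; granting this, $\rho_I$ is readily seen to differ from both the identity and the universal congruence.

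First I would manufacture a concrete non-regular element. Fix $t \in T$ with $\mid\mathbf{s}(t)\mid > 1$ and choose a vertex $v \in \mathbf{s}(t)$, regarded as the corresponding empty path in $T^0$ so that $\mathbf{s}(v) = \mathbf{r}(v) = \{v\}$. Since $\mid\mathbf{s}(t)\mid > 1$ we have $\{v\} \neq \mathbf{s}(t)$, while $v \in \mathbf{s}(t)$ gives $\mathbf{r}(v) \cap \mathbf{s}(t) = \{v\} \cap \mathbf{s}(t) \neq \emptyset$, so $vt \in P(\Gamma)$. This path is reduced, being neither of the form $\mathbf{s}(t')t'$ (as $v \neq \mathbf{s}(t)$) nor of the form $t'\mathbf{r}(t')$ (as the relation $t \in T$ cannot equal the empty path $\mathbf{r}(v) = \{v\}$), yet it is not linear precisely because $\mathbf{r}(v) = \{v\} \neq \mathbf{s}(t)$. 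Hence $vt \in RP(\Gamma) \setminus RLP(\Gamma)$, exactly as recorded in the remark preceding the statement. Since $t \in RLP(\Gamma) \setminus T^0$ with $\mathbf{r}(vt) = \mathbf{r}(t)$, the word $vt\,t^{-1}$ is in right normal form, so $[vtt^{-1}] \in S_{\Gamma} \subseteq Q_{\Gamma}$; by Lemma~\ref{regular} it is non-regular, and as a nonzero word in right normal form it is distinct from $[0]$.

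Next I would feed this into the two ideal lemmas. By the definition of $I$ in Lemma~\ref{idempotent-separating} (respectively Lemma~\ref{idempotent-separating2}), the element $[vtt^{-1}]$ lies in $I$ because its first coordinate $vt$ belongs to $RP(\Gamma) \setminus RLP(\Gamma)$. Thus $I \supseteq \{[0],\, [vtt^{-1}]\}$ has at least two elements, so $I \times I \not\subseteq 1_{Q_{\Gamma}}$ and therefore $\rho_I \neq 1_{Q_{\Gamma}}$. Conversely, those lemmas assert that $I$ is a proper ideal, so there is some $c \notin I$; the pair $([0], c)$ lies neither in $I \times I$ (since $c \notin I$) nor in the diagonal (since $c \neq [0]$), whence $\rho_I$ is not the universal congruence. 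Since $\rho_I$ is a genuine semigroup congruence strictly between the identity and the universal congruence, $Q_{\Gamma}$ is not congruence-free, and the verbatim argument using Lemma~\ref{idempotent-separating2} disposes of $S_{\Gamma}$.

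There is essentially no hard step: the theorem is a corollary of the machinery already assembled, and the only point deserving care is the verification that $vt$ is a \emph{reduced} and \emph{non-linear} path, so that $[vtt^{-1}]$ is a bona fide non-regular member of $I$ and is not collapsed to $[0]$. Everything else is immediate from Lemma~\ref{regular}, Lemma~\ref{idempotent-separating} and Lemma~\ref{idempotent-separating2}.
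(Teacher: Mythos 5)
Your proposal is correct and follows essentially the same route as the paper: the remark immediately preceding the theorem observes that $v\in\mathbf{s}(t)$ with $\mid\mathbf{s}(t)\mid>1$ yields $vt\in RP(\Gamma)\setminus RLP(\Gamma)$ and hence a non-regular element $[vtt^{-1}]$ of the ideal $I$, so the congruence $\rho_I$ of Lemma~\ref{idempotent-separating} (resp.\ Lemma~\ref{idempotent-separating2}) is neither the identity nor the universal congruence. You merely spell out the verification that $vt$ is reduced and non-linear, which the paper leaves implicit.
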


Let $\Gamma = (V, T, \mathbf{r}, \mathbf{s})$ be a network. For each $A \subset V$ the cardinality of the set $\{t \in T: \mathbf{s}(t)=A\}$ is called the {\it out-index} of $A$ in $\Gamma$, denoted by $o(A)$.

A {\it $*$-ideal} of a right abundant semigroup $S$ is an ideal of $S$ which is closed under the relation ${\mathcal{L}}^{*}$. It is easy to see that if we regard a right $*$-abundant semigroup $S$ as a unary semigroup and $I$ is a proper $*$-ideal of $S$  then  $\rho_I = (I \times I) \cup 1_{S}$ is a unary semigroup congruence on $S$. Since if $a\, \rho_I\, b$ then either $a = b$ (so certainly $a^* = b^*$) or $a, b \in I$. But then $a^*, b^* \in I$ (as $I$ is closed under $^*$), so $a^*\, \rho_I\, b^*$.

 A right $*$-abundant semigroup is said to be {\it $*$-congruence-free} if it has only two unary semigroup congruences, the identity congruence and the universal congruence.

\begin{lemma}\label{*-ideal}
Let $\Gamma = (V, T, \mathbf{r}, \mathbf{s})$ be a network and let $t \in T$ be such that $o(\mathbf{r}(t))=0$ and there does not exist $A \in T^0\setminus V$ with $\mathbf{r}(t) \subseteq A$. Then the principal ideal $I$ generated by $[tt^{-1}]$ is a proper $*$-ideal of $Q_{\Gamma}$.
\end{lemma}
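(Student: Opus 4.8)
The plan is to describe the principal ideal $I=Q_{\Gamma}^{1}[tt^{-1}]Q_{\Gamma}^{1}$ completely and then read off the two required properties. Note first that $t\in RLP(\Gamma)\setminus T^{0}$, so $[tt^{-1}]\in E(Q_{\Gamma})$ by Lemma~\ref{ES}, and that $I$ is an ideal by construction. The crux, which I expect to be the main obstacle, is to extract the following two consequences of the hypotheses. The assumption that no $A\in T^{0}\setminus V$ satisfies $\mathbf{r}(t)\subseteq A$ forces $\mathbf{r}(t)$ to be a singleton $\{v\}$: otherwise $\mathbf{r}(t)$ is itself a non-singleton range, hence lies in $T^{0}\setminus V$ and contains $\mathbf{r}(t)$, a contradiction. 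Granting this, I claim that $t$ has no reduced continuation, i.e. there is no $x\in T\cup T^{0}$ with $x\neq\mathbf{r}(t)$ and $\mathbf{s}(x)\cap\mathbf{r}(t)\neq\emptyset$, so that $t$ is never a proper prefix of a reduced path. Indeed such an $x$ would satisfy $v\in\mathbf{s}(x)$; if $\mathbf{s}(x)=\{v\}$ then either $x\in T$, giving $o(\mathbf{r}(t))\geq 1$ against $o(\mathbf{r}(t))=0$, or $x\in T^{0}$ and $x=\mathbf{r}(t)$, which is excluded; while if $\mathbf{s}(x)\supsetneq\{v\}$ then $\mathbf{s}(x)\in T^{0}\setminus V$ with $\mathbf{r}(t)\subseteq\mathbf{s}(x)$, again contradicting the containment hypothesis.

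With these facts in hand I would compute $I$ using Lemma~\ref{ProductS}. For left multiples, a check of the four cases shows that $[\mu\nu^{-1}][tt^{-1}]$ always has range part $t$ — the case $\nu=t\eta$ with $\eta$ nontrivial is ruled out because no linear path properly extends $t$ — and conversely $[\gamma t^{-1}]=[\gamma t^{-1}][tt^{-1}]$ for all $\gamma\in RP(\Gamma)$ with $\mathbf{r}(\gamma)=\mathbf{r}(t)$; hence $Q_{\Gamma}^{1}[tt^{-1}]=\{[\gamma t^{-1}]:\gamma\in RP(\Gamma),\ \mathbf{r}(\gamma)=\mathbf{r}(t)\}\cup\{[0]\}$, which is exactly the $\mathcal{L}^{*}$-class of $[tt^{-1}]$ together with $[0]$. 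Multiplying these on the right by an arbitrary $[\rho\sigma^{-1}]$ and applying Lemma~\ref{ProductS} again, the only case that could produce a new range part is $\rho=t\xi$ with $\xi$ nontrivial, which requires $t$ to be a proper prefix of the reduced path $\rho$ and is therefore impossible by the claim. The surviving cases yield range parts $\beta$ with $\mathbf{r}(\beta)=\mathbf{r}(t)$, and conversely every admissible such element factors as $[\gamma\beta^{-1}]=[\gamma t^{-1}][t\beta^{-1}]$. I thus expect the clean description
\[
I=\{[\gamma\beta^{-1}]\in Q_{\Gamma}:\gamma\in RP(\Gamma),\ \beta\in RLP(\Gamma),\ \mathbf{r}(\gamma)=\mathbf{r}(\beta)=\mathbf{r}(t)\}\cup\{[0]\}.
\]

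Both conclusions then follow at once. Membership in $I$ depends only on the range part $\beta$, through the single condition $\mathbf{r}(\beta)=\mathbf{r}(t)$; since by Lemma~\ref{LR*-class}(i) two nonzero elements are $\mathcal{L}^{*}$-related precisely when their range parts coincide, and $\{[0]\}$ forms a single $\mathcal{L}^{*}$-class, $I$ is a union of $\mathcal{L}^{*}$-classes and hence closed under $\mathcal{L}^{*}$; equivalently $[\gamma\beta^{-1}]^{*}=[\beta\beta^{-1}]\in I$. So $I$ is a $*$-ideal. For properness, the empty path $[\mathbf{s}(t)]$ has range part $\mathbf{s}(t)$ with $\mathbf{r}(\mathbf{s}(t))=\mathbf{s}(t)\neq\mathbf{r}(t)$, since $\mathbf{s}(t)$ and $\mathbf{r}(t)$ are disjoint; thus $[\mathbf{s}(t)]\notin I$, and $I$ is a proper $*$-ideal of $Q_{\Gamma}$.
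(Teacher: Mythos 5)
Your proposal is correct and follows essentially the same route as the paper: deduce $|\mathbf{r}(t)|=1$ from the containment hypothesis, show $t$ admits no proper reduced continuation, compute the principal ideal explicitly as $\{[\alpha\beta^{-1}]:\alpha\in RP(\Gamma),\ \beta\in RLP(\Gamma),\ \mathbf{r}(\alpha)=\mathbf{r}(\beta)=\mathbf{r}(t)\}\cup\{[0]\}$ via Lemma~\ref{ProductS}, get closure under ${\mathcal{L}}^*$ from Lemma~\ref{LR*-class}, and get properness from $[\mathbf{s}(t)]\notin I$. Your justification that $t$ is never a proper prefix of a reduced path is in fact slightly more careful than the paper's (which cites only $o(\mathbf{r}(t))=0$ where the containment hypothesis is also needed), but the argument is the same in substance.
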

 \begin{proof}
 Let $t$ be a relation in a network $\Gamma$ such that $o(\mathbf{r}(t))=0$ and there does not exist $A \in T^0\setminus V$ with $\mathbf{r}(t) \subseteq A$. Then we obtain that $|\mathbf{r}(t)|=1$ otherwise $\mathbf{r}(t) \in T^0\setminus V$ and $\mathbf{r}(t) \subseteq \mathbf{r}(t)$, a contradiction. If $\beta \in RP(\Gamma)$ and $\beta = t\eta$ for some $\eta \in PR(\Gamma)$, we must have $\beta = t$ and $\eta = \mathbf{r}(t)$ as $o(\mathbf{r}(t))=0$. If $\beta \in RP(\Gamma)$ and $t = \beta\xi$ for some $\xi \in PR(\Gamma)$,  we have either $\beta = \mathbf{s}(t)$ and $\xi = t$ or $\beta = t$ and $\xi = \mathbf{r}(t)$.  So by Lemma~\ref{ProductS} we get the principal ideal $I$ generated by $[tt^{-1}]$  as follows
 \[I = Q_{\Gamma}[tt^{-1}]Q_{\Gamma}=\{[\alpha\beta^{-1}]: \alpha \in RP(\Gamma), \beta \in RLP(\Gamma), \mathbf{r}(\alpha)=\mathbf{r}(\beta)=\mathbf{r}(t)\} \cup \{[0]\}.\]
 It is easy to see that $I$ is closed under the relation ${\mathcal{L}}^{*}$ by Lemma~\ref{LR*-class}. Certainly $I$ is a proper ideal of $Q_{\Gamma}$ since $\mathbf{s}(t) \cap \mathbf{r}(t) = \emptyset$ and $[\mathbf{s}(t)] \notin I$.
 \end{proof}

 The following theorem is an immediate result of Lemma~\ref{*-ideal}.

 \begin{them}
If $\Gamma = (V, T, \mathbf{s}, \mathbf{r})$ is a  network and there exists $t \in T$ such that  $o(\mathbf{r}(t))=0$ and there does not exist $A \in T^0\setminus V$ with $\mathbf{r}(t) \subseteq A$, then $Q_{\Gamma}$ is not  $*$-congruence-free as a  unary semigroup.
\end{them}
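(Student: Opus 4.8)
The plan is to exhibit a single unary semigroup congruence on $Q_{\Gamma}$ that is neither the identity nor the universal congruence; the existence of such a congruence immediately witnesses the failure of $*$-congruence-freeness. Since the hypotheses on $t$ are exactly those of Lemma~\ref{*-ideal}, almost all of the substantive work is already packaged there, and the theorem should read as a short corollary.

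First I would invoke Lemma~\ref{*-ideal} to produce the principal ideal $I$ generated by $[tt^{-1}]$ and to conclude that $I$ is a \emph{proper} $*$-ideal of $Q_{\Gamma}$, with the explicit description
\[I = \{[\alpha\beta^{-1}]: \alpha \in RP(\Gamma),\ \beta \in RLP(\Gamma),\ \mathbf{r}(\alpha)=\mathbf{r}(\beta)=\mathbf{r}(t)\} \cup \{[0]\}.\]
Next I would appeal to the observation recorded just before the theorem statement: for any proper $*$-ideal $I$ of a right $*$-abundant semigroup regarded as a unary semigroup, the relation $\rho_I = (I \times I) \cup 1_{Q_{\Gamma}}$ is a unary semigroup congruence (it preserves both the multiplication and the operation $^*$, the latter because $I$ is closed under ${\mathcal{L}}^*$). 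Thus $\rho_I$ is a legitimate congruence in the $(2,1)$-setting.

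It then remains only to verify that $\rho_I$ differs from both of the two trivial congruences. For the identity, note that $[0] \in I$ and $[tt^{-1}] \in I$ with $[tt^{-1}] \neq [0]$, so $I$ has at least two elements and hence $\rho_I$ properly contains $1_{Q_{\Gamma}}$. For the universal congruence, recall from the proof of Lemma~\ref{*-ideal} that $[\mathbf{s}(t)] \notin I$, since $\mathbf{s}(t) \cap \mathbf{r}(t) = \emptyset$; the pair $([tt^{-1}], [\mathbf{s}(t)])$ then lies neither in $I \times I$ nor on the diagonal, so $\rho_I$ is strictly contained in $Q_{\Gamma} \times Q_{\Gamma}$. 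These two facts together show that $Q_{\Gamma}$ carries a nontrivial proper unary congruence, i.e.\ it is not $*$-congruence-free.

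There is essentially no obstacle remaining at this level: the genuine content, namely verifying that $I$ is closed under multiplication on both sides and under ${\mathcal{L}}^*$, has already been discharged in Lemma~\ref{*-ideal}, and the remark preceding the theorem reduces $*$-congruence-freeness to the mere existence of a proper $*$-ideal containing more than one element. The only mild care needed is to confirm the two strict inequalities $\rho_I \neq 1_{Q_{\Gamma}}$ and $\rho_I \neq Q_{\Gamma} \times Q_{\Gamma}$, and both follow at once from the explicit description of $I$ recorded in Lemma~\ref{*-ideal}.
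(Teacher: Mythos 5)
Your proposal is correct and follows exactly the route the paper intends: the paper states the theorem as an immediate consequence of Lemma~\ref{*-ideal} together with the remark that a proper $*$-ideal $I$ yields the unary semigroup congruence $\rho_I=(I\times I)\cup 1_{Q_\Gamma}$, and your verification that $\rho_I$ is neither the identity (via $[0]\neq[tt^{-1}]$ in $I$) nor universal (via $[\mathbf{s}(t)]\notin I$) is precisely the omitted routine check.
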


 \begin{lemma}\label{*-ideal S}
Let $\Gamma = (V, T, \mathbf{r}, \mathbf{s})$ be a network  with $|T|> 1$ and let $t, q \in T$ be such that $o(\mathbf{r}(t))=0$, $\mathbf{r}(t) \neq \mathbf{r}(q)$  and there does not exist $A \in T^0\setminus V$ with $\mathbf{r}(t) \subseteq A$. Then the principal ideal  generated by $[tt^{-1}]$ is a proper $*$-ideal of $S_{\Gamma}$.
\end{lemma}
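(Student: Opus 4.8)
The plan is to follow the strategy of Lemma~\ref{*-ideal}: compute the principal ideal generated by $[tt^{-1}]$ explicitly, then verify closure under $\mathcal{L}^*$ and properness. The only genuinely new point is properness, and this is precisely why the extra hypotheses $|T|>1$ and $\mathbf{r}(t)\neq\mathbf{r}(q)$ are imposed.

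First I would record the two structural consequences of the hypotheses on $t$, exactly as in Lemma~\ref{*-ideal}. Since there is no $A\in T^0\setminus V$ with $\mathbf{r}(t)\subseteq A$, we must have $|\mathbf{r}(t)|=1$; and since in addition $o(\mathbf{r}(t))=0$, no element of $T\cup T^0$ can legitimately follow the vertex $\mathbf{r}(t)$ in a reduced path. Concretely, if $x\in T$ had $\mathbf{s}(x)\cap\mathbf{r}(t)\neq\emptyset$ then, as $\mathbf{r}(t)$ is a singleton, either $\mathbf{s}(x)=\mathbf{r}(t)$, contradicting $o(\mathbf{r}(t))=0$, or $\mathbf{r}(t)\subsetneq\mathbf{s}(x)\in T^0\setminus V$, contradicting the hypothesis; the empty-path case is handled the same way. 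This is the combinatorial engine for everything that follows.

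Next I would identify the principal ideal. Set
\[I = \{[\alpha\beta^{-1}]\in S_{\Gamma}: \mathbf{r}(\alpha)=\mathbf{r}(\beta)=\mathbf{r}(t)\}\cup\{[0]\}.\]
That $[tt^{-1}]\in I$ is clear. Using the product of $S_{\Gamma}$ together with the previous paragraph, one checks that $I$ is a two-sided ideal: in a product $[\alpha\beta^{-1}][\mu\nu^{-1}]$ with the left factor in $I$, the clause $\mu=\beta\xi$ forces $\xi$ to be empty, since nothing follows $\beta$ (which ends at $\mathbf{r}(t)$), so the common range stays $\mathbf{r}(t)$; the clause $\beta=\mu\eta$ produces denominator $\nu\eta\in RLP(\Gamma)\setminus T^0$ still ending at $\mathbf{r}(t)$ by Lemma~\ref{RLP_Property}; the symmetric analysis handles left multiples, and the remaining cases give $[0]$. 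Conversely every element of $I$ is reached, because for $[\alpha\beta^{-1}]\in I$ both $[\alpha t^{-1}]$ and $[t\beta^{-1}]$ lie in $S_{\Gamma}$ and $[\alpha t^{-1}][tt^{-1}][t\beta^{-1}]=[\alpha\beta^{-1}]$ by Lemma~\ref{ProductS}. Hence $I$ is exactly the principal ideal generated by $[tt^{-1}]$.

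Closure of $I$ under $\mathcal{L}^*$ is then immediate from Lemma~\ref{LR*-class}(i): the $\mathcal{L}^*$-class of $[\alpha\beta^{-1}]$ consists of the elements with denominator $\beta$, and membership in $I$ depends only on $\mathbf{r}(\beta)$. The step I expect to be the real obstacle, and the reason the hypotheses differ from those of Lemma~\ref{*-ideal}, is properness. In $Q_{\Gamma}$ one argued that $[\mathbf{s}(t)]\notin I$, but $[\mathbf{s}(t)]$ is an empty-path element and does not even belong to $S_{\Gamma}$, so a different witness is required. This is exactly what $q$ supplies: since $q\in RLP(\Gamma)\setminus T^0$ we have $[qq^{-1}]\in S_{\Gamma}$, while $\mathbf{r}(q)\neq\mathbf{r}(t)$ gives $[qq^{-1}]\notin I$. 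Therefore $I$ is a proper $\mathcal{L}^*$-closed ideal, that is, a proper $*$-ideal of $S_{\Gamma}$, as required.
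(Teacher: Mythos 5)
Your proposal is correct and follows essentially the same route as the paper: both derive $|\mathbf{r}(t)|=1$ and the fact that nothing can follow $\mathbf{r}(t)$ in a reduced path, identify the principal ideal as $\{[\alpha\beta^{-1}]\in S_{\Gamma}:\mathbf{r}(\alpha)=\mathbf{r}(\beta)=\mathbf{r}(t)\}\cup\{[0]\}$, get $\mathcal{L}^*$-closure from Lemma~\ref{LR*-class}, and use $[qq^{-1}]\notin I$ for properness. You simply spell out the ideal-closure and generation steps that the paper leaves as ``similar to Lemma~\ref{*-ideal}''.
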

 \begin{proof}
 Let $t$ and $q$ be two distinct relations in a network $\Gamma$ such that $o(\mathbf{r}(t))=0$, $\mathbf{r}(t) \neq \mathbf{r}(q)$ and there does not exist $A \in T^0\setminus V$ with $\mathbf{r}(t) \subseteq A$. Similar to the proof of Lemma~\ref{*-ideal}, we have if $\beta \in RP(\Gamma)$ and $\beta = t\eta$ for some $\eta \in PR(\Gamma)$, then $\beta = t$ and $\eta = \mathbf{r}(t)$; if $\beta \in RP(\Gamma)$ and $t = \beta\xi$ for some $\xi \in PR(\Gamma)$, then either $\beta = \mathbf{s}(t)$ and $\xi = t$ or $\beta = t$ and $\xi = \mathbf{r}(t)$.   Again by the statement above Theorem~\ref{SG}  we get the principal ideal $I$ of $S_{\Gamma}$ generated by $[tt^{-1}]$ as follows
 \[I = S^1_{\Gamma}[tt^{-1}]S^1_{\Gamma}=\{[\alpha\beta^{-1}]: \alpha \in RP(\Gamma), \beta \in RLP(\Gamma)\setminus T^0, \mathbf{r}(\alpha)=\mathbf{r}(\beta)=\mathbf{r}(t)\} \cup \{[0]\}.\]
 It is easy to see that $I$ is closed under the relation ${\mathcal{L}}^{*}$ by Lemma~\ref{LR*-class}. Certainly $I$ is a proper ideal of $S_{\Gamma}$ since  $[qq^{-1}] \notin I$.
 \end{proof}

Similar to Lemma~\ref{*-ideal S} we construct a proper ideal of $R_{\Gamma}$ as follows. Its proof is the same as Lemma~\ref{*-ideal S} so we omit it.
\begin{lemma}\label{*-ideal R}
Let $\Gamma = (V, T, \mathbf{r}, \mathbf{s})$ be a network  with $|T|> 1$ and let $t, q \in T$ be such that $o(\mathbf{r}(t))=0$, $\mathbf{r}(t) \neq \mathbf{r}(q)$  and there does not exist $A \in T^0\setminus V$ with $\mathbf{r}(t) \subseteq A$. Then the principal ideal $R_{\Gamma}[tt^{-1}]R_{\Gamma}$  generated by $[tt^{-1}]$ is a proper ideal of $R_{\Gamma}$, where
\[ R_{\Gamma}[tt^{-1}]R_{\Gamma}=\{[\alpha\beta^{-1}]: \alpha, \beta \in RLP(\Gamma)\setminus T^0, \mathbf{r}(\alpha)=\mathbf{r}(\beta)=\mathbf{r}(t)\} \cup \{[0]\}.\]
\end{lemma}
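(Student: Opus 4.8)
The plan is to mirror the argument of Lemma~\ref{*-ideal S}, adapted to the inverse subsemigroup $R_{\Gamma}$, where every element $[\alpha\beta^{-1}]$ already has $\alpha, \beta \in RLP(\Gamma)\setminus T^0$, so that the product formula of Lemma~\ref{ProductS} collapses to its second and third cases (the case $\beta = \mathbf{r}(\alpha) \in T^0$ never occurs in $R_{\Gamma}$). First I would record, exactly as in the proof of Lemma~\ref{*-ideal}, that the hypothesis forbidding any $A \in T^0\setminus V$ with $\mathbf{r}(t) \subseteq A$ forces $|\mathbf{r}(t)| = 1$: otherwise $\mathbf{r}(t)$ itself lies in $T^0\setminus V$ and satisfies $\mathbf{r}(t) \subseteq \mathbf{r}(t)$, a contradiction. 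Then, using $o(\mathbf{r}(t)) = 0$, I would pin down the prefix structure of $t$ inside $RLP(\Gamma)\setminus T^0$: since $t$ is a single relation and no relation has source $\mathbf{r}(t)$, the only element of $RLP(\Gamma)\setminus T^0$ prefix comparable with $t$ is $t$ itself, because a proper extension $t\eta$ would require $\eta$ to begin with a relation of source $\mathbf{r}(t)$, contradicting $o(\mathbf{r}(t))=0$, while a proper prefix of the length-one path $t$ would lie in $T^0$.

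Next I would compute the principal ideal. For the inclusion $\subseteq$, I would apply Lemma~\ref{ProductS} to a general product $[\mu\nu^{-1}][tt^{-1}]$ and then multiply by $[\rho\sigma^{-1}]$ on the right. By the prefix analysis above, $[\mu\nu^{-1}][tt^{-1}]$ is $[0]$ unless $\nu = t$, in which case it equals $[\mu t^{-1}]$ with $\mathbf{r}(\mu) = \mathbf{r}(t)$; multiplying on the right then yields either $[0]$ or some $[\mu\sigma^{-1}]$ with $\mu, \sigma \in RLP(\Gamma)\setminus T^0$ and $\mathbf{r}(\mu) = \mathbf{r}(\sigma) = \mathbf{r}(t)$. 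Since $[tt^{-1}]$ itself already has this form, every element of the principal ideal lies in the set displayed in the statement. For the reverse inclusion I would exhibit the explicit factorisation $[\alpha\beta^{-1}] = [\alpha t^{-1}][tt^{-1}][t\beta^{-1}]$, valid whenever $\alpha, \beta \in RLP(\Gamma)\setminus T^0$ and $\mathbf{r}(\alpha) = \mathbf{r}(\beta) = \mathbf{r}(t)$; here $[\alpha t^{-1}], [t\beta^{-1}] \in R_{\Gamma}$ precisely because $\mathbf{r}(\alpha) = \mathbf{r}(t) = \mathbf{r}(\beta)$, and two direct applications of Lemma~\ref{ProductS} collapse the product back to $[\alpha\beta^{-1}]$. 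Together with the fact that $[0]$ lies in every ideal, this establishes the claimed description.

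Finally, for properness I would invoke the second relation $q$. Since $q \in RLP(\Gamma)\setminus T^0$, the idempotent $[qq^{-1}]$ belongs to $E \subseteq R_{\Gamma}$, yet $\mathbf{r}(q) \neq \mathbf{r}(t)$ forces $[qq^{-1}] \notin R_{\Gamma}[tt^{-1}]R_{\Gamma}$ by the range condition in the displayed set; hence the ideal is proper. I expect the only delicate point, and the main obstacle, to be the prefix bookkeeping in the inclusion $\subseteq$: one must check carefully that within $R_{\Gamma}$ no empty-path prefixes intrude and that $o(\mathbf{r}(t)) = 0$ genuinely rules out every proper extension of $t$, so that Lemma~\ref{ProductS} really does force both the range condition $\mathbf{r}(\alpha)=\mathbf{r}(\beta)=\mathbf{r}(t)$ and membership in $RLP(\Gamma)\setminus T^0$. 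All remaining steps are routine specialisations of the computations already carried out for Lemma~\ref{*-ideal S}, which is why the authors omit them.
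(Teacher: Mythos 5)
Your proposal is correct and follows essentially the same route as the paper, which omits the proof precisely because it is the specialisation of the argument for Lemma~\ref{*-ideal S} (and hence Lemma~\ref{*-ideal}) to $R_{\Gamma}$: the same deduction that $|\mathbf{r}(t)|=1$, the same prefix analysis of $t$ from $o(\mathbf{r}(t))=0$, the same computation of the ideal via Lemma~\ref{ProductS}, and the same witness $[qq^{-1}]$ for properness. Your explicit factorisation $[\alpha\beta^{-1}]=[\alpha t^{-1}][tt^{-1}][t\beta^{-1}]$ for the reverse inclusion is a harmless (and welcome) elaboration of what the paper leaves implicit.
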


 Further, we have:

  \begin{them}
If $\Gamma = (V, T, \mathbf{r}, \mathbf{s})$ is a network  with $|T|>1$ and let $t, q \in T$ be such that $o(\mathbf{r}(t))=0$, $\mathbf{r}(t) \neq \mathbf{r}(q)$   and there does not exist $A \in T^0\setminus V$ with $\mathbf{r}(t) \subseteq A$, then $S_{\Gamma}$ is not  $*$-congruence-free as a unary semigroup, and also $R_{\Gamma}$ is not  congruence-free.
\end{them}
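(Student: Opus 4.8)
The plan is to realize the required congruences as Rees-type quotients built from the proper ideals supplied by Lemma~\ref{*-ideal S} and Lemma~\ref{*-ideal R}, and then to check that each such congruence differs from both the identity and the universal congruence. Since those two lemmas already do the structural work of producing proper (and, for $S_\Gamma$, ${\mathcal{L}}^*$-closed) ideals, the remaining task is essentially to verify that the associated congruences are genuinely intermediate.

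First I would treat $S_\Gamma$. By Lemma~\ref{*-ideal S}, under the stated hypotheses the principal ideal $I$ generated by $[tt^{-1}]$ is a proper $*$-ideal of $S_\Gamma$. Invoking the observation recorded just before Lemma~\ref{*-ideal}, the relation $\rho_I = (I\times I)\cup 1_{S_\Gamma}$ is then a unary semigroup congruence on $S_\Gamma$. It remains to argue that $\rho_I$ is neither $1_{S_\Gamma}$ nor the universal congruence. For the former, note that $[0]$ and $[tt^{-1}]$ are distinct elements of $I$, so $([tt^{-1}],[0])\in I\times I\subseteq\rho_I$ while $[tt^{-1}]\neq[0]$; hence $\rho_I\neq 1_{S_\Gamma}$. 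For the latter, the hypothesis $\mathbf{r}(t)\neq\mathbf{r}(q)$ guarantees, as in the proof of Lemma~\ref{*-ideal S}, that $[qq^{-1}]\notin I$; since $[0]\in I$ but $[qq^{-1}]\notin I$ and $[qq^{-1}]\neq[0]$, the pair $([qq^{-1}],[0])$ lies outside $\rho_I$, so $\rho_I$ is not universal. Thus $S_\Gamma$ carries a unary semigroup congruence other than the two trivial ones, and is therefore not $*$-congruence-free.

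The argument for $R_\Gamma$ is parallel but uses ordinary congruences, since $R_\Gamma$ is an inverse semigroup by Corollary~\ref{R_Gamma}. By Lemma~\ref{*-ideal R}, the principal ideal $J = R_\Gamma[tt^{-1}]R_\Gamma$ is a proper ideal of $R_\Gamma$, so the Rees congruence $\rho_J = (J\times J)\cup 1_{R_\Gamma}$ is a congruence on $R_\Gamma$. Exactly as above, $[0]$ and $[tt^{-1}]$ are distinct members of $J$, whence $\rho_J\neq 1_{R_\Gamma}$, while $[qq^{-1}]\notin J$ forces $([qq^{-1}],[0])\notin\rho_J$, so $\rho_J$ is not universal. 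Hence $R_\Gamma$ admits a nontrivial proper congruence and is not congruence-free.

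I do not anticipate a serious obstacle, precisely because the preceding lemmas isolate the delicate point --- namely that the relevant ideals are proper and, in the case of $S_\Gamma$, closed under ${\mathcal{L}}^*$. The only thing one must be careful about is the two-sidedness of the triviality check: one must exhibit both a related pair of distinct elements (to rule out the identity congruence) and an unrelated pair (to rule out the universal congruence), and the hypotheses $|T|>1$ and $\mathbf{r}(t)\neq\mathbf{r}(q)$ are exactly what is needed to produce the witness $[qq^{-1}]$ lying outside the ideal.
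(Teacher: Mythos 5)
Your proposal is correct and matches the paper's intended argument: the paper presents this theorem as an immediate consequence of Lemma~\ref{*-ideal S} and Lemma~\ref{*-ideal R}, and the Rees-congruence verification you spell out (non-identity because $[0]\neq[tt^{-1}]$ both lie in the ideal, non-universal because $[qq^{-1}]$ does not) is exactly the omitted routine step.
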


\section{Homomorphisms}\label{sec:hom}
In this section we discuss the relationship between network homomorphisms and semigroup homomorphisms.

Let $S$ be an semigroup with a set $E(S)$ of all idempotents. The relation $\leq$ defined by for all $a, b \in S$,
\[a \leq b \ \mbox{if\ and\ only\ if}\ a=xb = by, \ xa = a\]
for some $x, y \in S^1$, is a partial order on $S$ called the {\it natural partial order} of $S$~\cite{Mitsch}. When the natural partial order is restricted to the set $E(S)$ it is as follows: for all $e, f \in E(S)$,
\[e \leq f\ \mbox{if\ and\ only\ if}\ e = ef = fe.\]
Further, $E(S)$ is a partially ordered set with respect to $\leq$. In particular, if $E(S)$ is a semilattice, for all $e, f \in E$,
\[e \leq f\ \mbox{if\ and\ only\ if}\ e = ef.\]

\begin{lemma}\label{ES_Partial}
Let $E(Q_\Gamma)$ be the set of all idempotents of $Q_\Gamma$ and let $\leq$ be the natural partial order on $Q_\Gamma$  defined above. Then the following statements hold.
\begin{enumerate}
\item[\rm (i)]  An idempotent $[\alpha\alpha^{-1}]$ is maximal in $E(Q_\Gamma)$ with respect to $\leq$ if and only if $\alpha \in T^0$;
\item[\rm (ii)] An idempotent $[\alpha\alpha^{-1}]$ is maximal  in $E=E(Q_\Gamma)\setminus{\{[A]: A \in T^0\}}$ with respect to $\leq$ if and only if $\alpha \in T$.
\end{enumerate}
\end{lemma}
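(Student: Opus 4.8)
The plan is to treat the two parts separately, since the ambient posets differ: part~(ii) lives in the semilattice $E$, whereas part~(i) requires the full natural partial order because $E(Q_\Gamma)$ is \emph{not} a semilattice. Throughout I use the characterisation $e \le f \Leftrightarrow e = ef = fe$ of the natural partial order on idempotents.

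For part~(ii), by Lemma~\ref{ES} the set $E$ is a semilattice, so the order collapses to $[\alpha\alpha^{-1}] \le [\beta\beta^{-1}] \Leftrightarrow [\alpha\alpha^{-1}][\beta\beta^{-1}] = [\alpha\alpha^{-1}]$. Reading off the product formula in Lemma~\ref{ES}, this holds exactly when $\alpha = \beta\eta$ for some $\eta$, i.e.\ when $\beta$ is a prefix of $\alpha$. Thus $[\alpha\alpha^{-1}]$ is maximal in $E$ iff $\alpha$ has no proper prefix lying in $RLP(\Gamma)\setminus T^0$. Writing $\alpha = t_1\cdots t_n$ with $t_i \in T$, its proper prefixes in $RLP(\Gamma)\setminus T^0$ are $t_1, t_1t_2, \ldots, t_1\cdots t_{n-1}$, so there are none precisely when $n = 1$, that is, $\alpha \in T$; this yields the stated equivalence. (The element $[0]$ is the bottom of $E$ and hence never maximal once $T \neq \emptyset$.)

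For part~(i), I first show that every $[\alpha\alpha^{-1}]$ with $\alpha \in RLP(\Gamma)\setminus T^0$ fails to be maximal. Put $A = \mathbf{s}(\alpha) \in T^0$. A direct reduction using (NR1) gives $[A][\alpha\alpha^{-1}] = [A\alpha\alpha^{-1}] = [\alpha\alpha^{-1}]$ (since $\mathbf{s}(\alpha)\alpha = \alpha$) and $[\alpha\alpha^{-1}][A] = [\alpha\alpha^{-1}A] = [\alpha\alpha^{-1}]$ (since the trailing letter of $\alpha^{-1}$ absorbs $A = \mathbf{r}(\alpha^{-1})$). Hence $[\alpha\alpha^{-1}] \le [A]$, and the two are distinct by uniqueness of normal forms (Corollary~\ref{uniqueNF}, as $\alpha \neq A$), so $[\alpha\alpha^{-1}] < [A]$ and $[\alpha\alpha^{-1}]$ is not maximal. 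Conversely, I show each $[A]$, $A \in T^0$, is maximal in $E(Q_\Gamma)$: assuming $[A] \le [f]$ I rule out every $[f] \neq [A]$. The case $[f] = [0]$ is immediate since $[A][0] = [0] \neq [A]$. If $[f] = [B]$ with $B \in T^0$, $B \neq A$, then $[A][B] = [AB]$ is either $[0]$ (when $A \cap B = \emptyset$) or a reduced length-two path (when $A \cap B \neq \emptyset$), in neither case equal to $[A]$, so $[A] \not\le [B]$. Finally, if $[f] = [\gamma\gamma^{-1}]$ with $\gamma \in RLP(\Gamma)\setminus T^0$, I compute $[\gamma\gamma^{-1}][A] = [\gamma\gamma^{-1}A]$: when $A = \mathbf{s}(\gamma)$ this equals $[\gamma\gamma^{-1}] \neq [A]$ (as $\gamma \notin T^0$), and when $A \neq \mathbf{s}(\gamma)$ the trailing letter $t_1^{-1}$ of $\gamma^{-1}$ satisfies $t_1^{-1}A = 0$ by (NR5), forcing $[\gamma\gamma^{-1}][A] = [0] \neq [A]$; either way $[A] \not\le [\gamma\gamma^{-1}]$. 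Thus the only idempotent above $[A]$ is $[A]$ itself, proving the equivalence.

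The main obstacle is part~(i). Because distinct $T^0$-idempotents need not commute (as noted after Lemma~\ref{ES}), $E(Q_\Gamma)$ is not a semilattice and one cannot shortcut to $e \le f \Leftrightarrow ef = e$; one must verify both $ef = e$ and $fe = e$ and, crucially, handle products in which a $T^0$-element sits on the right. These products fall outside the clean prefix-comparison cases of Lemma~\ref{ProductS} (which presume $\beta \in RLP(\Gamma)\setminus T^0$), so the computations must be carried out directly from the defining relations (NR1) and (NR5). The delicate step is the case split $A = \mathbf{s}(\gamma)$ versus $A \neq \mathbf{s}(\gamma)$, where (NR5) is exactly what annihilates the product and prevents any non-singleton idempotent from dominating $[A]$.
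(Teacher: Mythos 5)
Your proposal is correct and follows essentially the same route as the paper: part (ii) is reduced to prefix comparability via the semilattice product formula of Lemma~\ref{ES}, and part (i) hinges on the comparison $[\alpha\alpha^{-1}]\leq[\mathbf{s}(\alpha)]$ together with direct computations from (NR1) and (NR5) to show that no idempotent $[\mu\mu^{-1}]$ with $\mu\notin T^0$ can lie above a $T^0$-idempotent. Your write-up is somewhat more explicit than the paper's in checking both $ef=e$ and $fe=e$ for the non-semilattice case, but the underlying argument is the same.
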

\begin{proof}

(i) Suppose that $\alpha \in T^0$ and $[\alpha\alpha^{-1}]\leq[\mu\mu^{-1}]$ for some $\mu \in RLP(\Gamma)$. Then $[\mu\mu^{-1}]\ne [0]$ and $[\alpha\alpha^{-1}]=[\alpha\alpha^{-1}][\mu\mu^{-1}]=[\mu\mu^{-1}][\alpha\alpha^{-1}]$, that is, $[\alpha]=[\alpha\mu\mu^{-1}]=[\mu\mu^{-1}\alpha]$ which never happen if $\mu \in RLP(\Gamma)\setminus T^0$.  If $\mu \in T^0$, then it follows  from $[\alpha]=[\alpha\mu\mu^{-1}]=[\mu\mu^{-1}\alpha]$ that $[\alpha]=[\alpha\mu]=[\mu\alpha]$ which implies that $\alpha = \mu$. Hence $[\alpha\alpha^{-1}]$ is maximal in $E(Q_\Gamma)$.

Conversely, suppose that $\alpha \in RLP(\Gamma)$ and $[\alpha\alpha^{-1}]$ is maximal in $E(Q_\Gamma)$. Then $[\alpha\alpha^{-1}]\ne [0]$. Thus $[\alpha\alpha^{-1}] \leq [\mathbf{s}(\alpha)]$ since $[\alpha\alpha^{-1}]=[\mathbf{s}(\alpha)\alpha\alpha^{-1}]=[\alpha\alpha^{-1}\mathbf{s}(\alpha)]$. As $[\alpha\alpha^{-1}]$ is maximal, we get that $[\alpha\alpha^{-1}]=[\mathbf{s}(\alpha)]$, which implies that $\alpha = \mathbf{s}(\alpha) \in T^0$.

(ii) It follows from Lemma~\ref{ES} that $E=E(Q_\Gamma)\setminus{\{[A]: A \in T^0\}}$ is a semilattice. Let $\alpha\in T$, $\mu \in RLP(\Gamma)\setminus T^0$ and suppose that $[\alpha\alpha^{-1}] \leq [\mu\mu^{-1}]$ in $E$.
 Then $[\mu\mu^{-1}] \ne [0]$ and $[\alpha\alpha^{-1}][\mu\mu^{-1}]=[\alpha\alpha^{-1}]$. By the proof of Lemma~\ref{ES} we get $\alpha=\mu{x}$ for some $x \in RLP(\Gamma)$. Since $\alpha \in T$ and $\mu \in RLP(\Gamma)\setminus T^0$ it follows that  $\alpha=\mu$ and $x=\mathbf{r}(\alpha)$. Hence $[\alpha\alpha^{-1}]$ is maximal in $E(S_\Gamma)$.
	
Conversely, suppose that $[\alpha\alpha^{-1}]$ is maximal in $E$. Then $[\alpha\alpha^{-1}]\ne [0]$. Since $\alpha \in RLP(\Gamma)\setminus T^0$, we can write $\alpha= t\beta$ for some $t \in T$ and $\beta \in {RLP(\Gamma)}$, and hence $[\alpha\alpha^{-1}]=[t\beta(t\beta)^{-1}]\leq[{tt^{-1}}]$ by Lemma~\ref{ES}. Since $[\alpha\alpha^{-1}]$ is maximal in $E$, and $[tt^{-1}]\in E$, this implies that $[\alpha\alpha^{-1}]=[tt^{-1}]$. As $\alpha\alpha^{-1}$ and $tt^{-1}$ are in right normal form we get that $\alpha= t \in T$.
\end{proof}

\begin{them}\label{NetworkIsomorphism}
 Let $\Gamma = (V_{\Gamma}, T_{\Gamma}, \mathbf{s}, \mathbf{r})$ and $\Delta=(V_{\Delta}, T_{\Delta}, \mathbf{s}, \mathbf{r})$ be two networks. Then $\Gamma\cong{\Delta}$ if and only if $Q_\Gamma\cong{Q_\Delta}$.
\end{them}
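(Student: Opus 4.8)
The plan is to prove both implications, the forward one being routine and the converse requiring that we reconstruct the network from the semigroup using the idempotent structure established in Lemma~\ref{ES} and Lemma~\ref{ES_Partial}.

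For the direction $\Gamma\cong\Delta\Rightarrow Q_\Gamma\cong Q_\Delta$, an isomorphism $\phi=(\phi_V,\phi_T)$ extends to a bijection of the generating sets $X_\Gamma=T_\Gamma\cup T^0\cup T^{-1}\cup\{0\}\to X_\Delta$ by sending $A\mapsto\{v\phi_V:v\in A\}$ on $T^0$, $t^{-1}\mapsto(t\phi_T)^{-1}$ on $T^{-1}$, and $0\mapsto 0$. Since $\phi$ preserves $\mathbf{s}$, $\mathbf{r}$, disjointness, and (in)equality of sources and ranges, this bijection carries each defining relation (NR1)--(NR6) of $Q_\Gamma$ to the corresponding relation of $Q_\Delta$, and the same holds for $\phi^{-1}$; hence it induces a semigroup isomorphism $Q_\Gamma\cong Q_\Delta$.

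The substance is the converse. Fix a semigroup isomorphism $\psi:Q_\Gamma\to Q_\Delta$. Being an isomorphism, $\psi$ preserves idempotents, the relations $\mathcal{R}$ and $\mathcal{L}$, regularity, and the natural partial order; by Lemma~\ref{ES_Partial}(i) it carries the maximal idempotents of $E(Q_\Gamma)$, namely $\{[A]:A\in T^0_\Gamma\}$, bijectively onto $\{[B]:B\in T^0_\Delta\}$, giving a bijection $F:T^0_\Gamma\to T^0_\Delta$ with $\psi([A])=[F(A)]$. The first task is to isolate the vertices inside $T^0$. By Lemma~\ref{ProductS} together with (NR1)--(NR2), for $A,B\in T^0$ one has $[A][B]\neq[0]$ if and only if $A\cap B\neq\emptyset$, so $F$ preserves the intersection relation. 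Because $V\subseteq T^0$, every vertex occurs in $T^0$ as a singleton, and for $A,B\in T^0$ the containment $A\subseteq B$ is equivalent to the condition that $A\cap C\neq\emptyset$ implies $B\cap C\neq\emptyset$ for all $C\in T^0$ (for the nontrivial direction take $C=\{v\}$ with $v\in A$). Thus $F$ also preserves $\subseteq$, hence maps $\subseteq$-minimal elements to $\subseteq$-minimal elements; as the $\subseteq$-minimal members of $T^0$ are exactly the singletons $\{v\}$, $F$ restricts to a bijection $\phi_V:V_\Gamma\to V_\Delta$, and preservation of intersection forces $F(A)=\{v\phi_V:v\in A\}$ for every $A\in T^0$.

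Next recover the relations and the incidence. By Lemma~\ref{ES_Partial}(ii), $\psi$ restricts to an isomorphism $E(Q_\Gamma)\setminus\{[A]:A\in T^0\}\to E(Q_\Delta)\setminus\{[B]:B\in T^0\}$ and so maps the maximal idempotents $\{[tt^{-1}]:t\in T_\Gamma\}$ of this subsemilattice bijectively onto $\{[qq^{-1}]:q\in T_\Delta\}$; define $\phi_T:T_\Gamma\to T_\Delta$ by $\psi([tt^{-1}])=[(t\phi_T)(t\phi_T)^{-1}]$. A direct computation with (NR1) shows $[\mathbf{s}(t)]$ is the unique maximal idempotent of $E(Q_\Gamma)$ lying above $[tt^{-1}]$ in the natural partial order, while for the range one uses Lemma~\ref{regular LR}, Lemma~\ref{LR*-class}(ii), and Lemma~\ref{L*-class}: every regular $\mathcal{R}$-mate of $[tt^{-1}]$ has the form $[t\beta^{-1}]$ with $\beta\in RLP(\Gamma)$, its $\mathcal{L}$-related idempotent is $[\beta\beta^{-1}]$, and this is maximal in $E(Q_\Gamma)$ only when $\beta\in T^0$, forcing $\beta=\mathbf{r}(t)$; hence $[\mathbf{r}(t)]$ is the unique maximal idempotent $\mathcal{L}$-related to a regular $\mathcal{R}$-mate of $[tt^{-1}]$. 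Both descriptions are purely algebraic, so $\psi([\mathbf{s}(t)])=[\mathbf{s}(t\phi_T)]$ and $\psi([\mathbf{r}(t)])=[\mathbf{r}(t\phi_T)]$; combining these with $\psi([A])=[F(A)]$ and $F(A)=\{v\phi_V:v\in A\}$ yields $\{v\phi_V:v\in\mathbf{s}(t)\}=\mathbf{s}(t\phi_T)$ and $\{v\phi_V:v\in\mathbf{r}(t)\}=\mathbf{r}(t\phi_T)$, so $(\phi_V,\phi_T)$ is a network isomorphism and $\Gamma\cong\Delta$.

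The main obstacle is the incidence recovery in the converse: the source of $t$ is visible through the partial order as the unique maximal idempotent above $[tt^{-1}]$, but the range is \emph{not}, since $\mathbf{r}(t)\cap\mathbf{s}(t)=\emptyset$ forces $[\mathbf{r}(t)]$ to lie below no idempotent above $[tt^{-1}]$; it must instead be extracted from the regular $\mathcal{D}$-class of $[tt^{-1}]$, and the delicate point is verifying that this class contains exactly one maximal idempotent, namely $[\mathbf{r}(t)]$. A secondary subtlety is that $T^0$ is a family of subsets rather than the vertex set itself, so one must first recover the singletons algebraically, which is precisely the role of the characterization of $\subseteq$ in terms of the intersection relation.
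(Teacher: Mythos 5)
Your proof is correct, and it follows the paper's overall strategy --- recover $T^0$ from the maximal idempotents of $E(Q_\Gamma)$ via Lemma~\ref{ES_Partial}(i), recover $T$ from the maximal idempotents of $E(Q_\Gamma)\setminus\{[A]:A\in T^0\}$ via Lemma~\ref{ES_Partial}(ii), then recover incidence --- but it differs in the execution of two key steps, in both cases to your advantage. First, the paper asserts that the maximal idempotents of $E(Q_\Gamma)$ are exactly the $[v]$ with $v\in V_\Gamma$ and passes directly to a bijection $V_\Gamma\to V_\Delta$; but Lemma~\ref{ES_Partial}(i) only identifies the maximal idempotents with $T^0_\Gamma\supseteq V_\Gamma$, which is a proper containment whenever some relation has a non-singleton source or range (as in the example of Section~\ref{sec:ex}). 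Your extra step --- transporting the intersection relation via $[A][B]\neq[0]\Leftrightarrow A\cap B\neq\emptyset$, characterizing $\subseteq$ through it, and picking out the singletons as the $\subseteq$-minimal elements --- supplies exactly the argument needed to separate $V$ from $T^0$ and to prove $F(A)=\{v\phi_V:v\in A\}$, which the paper instead obtains by an informal argument with $[tv]\neq[0]$. Second, where the paper recovers $\mathbf{s}(t)$ and $\mathbf{r}(t)$ by explicitly computing the normal form of $\theta([t])$ and using $\theta([t^{-1}t])=[yy^{-1}]$ to force $y\in T^0_\Delta$, you instead give intrinsic order-theoretic and Green-theoretic characterizations ($[\mathbf{s}(t)]$ as the unique maximal idempotent above $[tt^{-1}]$, and $[\mathbf{r}(t)]$ as the unique maximal idempotent $\mathcal{L}$-related to a regular $\mathcal{R}$-mate of $[tt^{-1}]$); this avoids having to determine $\theta([t])$ and $\theta([t^{-1}])$ at all, at the cost of the verification that these characterizations single out the right idempotents, which you correctly reduce to Lemmas~\ref{regular LR}, \ref{LR*-class} and~\ref{ES_Partial}. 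Your treatment of the forward implication via the presentation is also fine (the paper omits it).
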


\begin{proof}
  Let $\Gamma = (V_{\Gamma}, T_{\Gamma}, \mathbf{s}, \mathbf{r})$ and $\Delta=(V_{\Delta}, T_{\Delta}, \mathbf{s}, \mathbf{r})$  be two networks. It is sufficient to show that if $Q_{\Gamma}\cong{Q_\Delta}$ then $\Gamma\cong{\Delta}$. Let $\theta$ be a semigroup isomorphism from $Q_\Gamma$ to $Q_\Delta$. Let $E(Q_\Gamma)$ and $E(Q_\Delta)$ denote the subsets of all idempotents of $Q_\Gamma$ and $Q_\Delta$, respectively. Also let $\leq_{Q_\Gamma}$ and $\leq_{Q_\Delta}$ denote the restriction to $E(Q_\Gamma)$ and $E(Q_\Delta)$, respectively, of the natural partial order on $Q_\Gamma$ and $Q_\Delta$, respectively. Then $(E(Q_\Gamma),\leq_{Q_\Gamma})$ and $(E(Q_\Delta),\leq_{Q_\Delta})$ are partially ordered sets. Since any isomorphism of semigroups induces an order-isomorphism of the corresponding  partially ordered sets of idempotents, $\theta$ must take $E(Q_\Gamma)$ bijectively to $E(Q_\Delta)$. By part (i) of Lemma~\ref{ES_Partial}, every vertex corresponding $[v]$ in $E(Q_\Gamma)$, but no other element of $Q_\Gamma$, is maximal in $E(Q_\Gamma)$, with respect to $\leq_{Q_\Gamma}$, and analogously for $Q_\Delta$. It follows that the restriction of $\theta$ to $\{[v]: v \in V_\Gamma\}$ is a bijection from  $\{[v]: v \in V_\Gamma\}$ to $\{[v']: v' \in V_\Delta\}$.
	
Next, by Lemma~\ref{ES_Partial}, every element of the form $[tt^{-1}](t\in{T_\Gamma})$, but no other element of $Q_\Gamma$, is maximal in $E(Q_\Gamma)\setminus{\{[A]: A \in T^0_\Gamma\}}$ with respect to $\leq_{Q_\Gamma}$, and analogously for $Q_\Delta$. 	Hence $\theta$ must take $\{[tt^{-1}]: t\in{T_\Gamma}\}$ bijectively to $\{[qq^{-1}]: q\in{T_\Delta}\}$. Now, let $t\in{T_\Gamma}$, write $[tt^{-1}]\theta=[qq^{-1}]$ for some $q\in{T_\Delta}$, and write $[t]\theta=[xy^{-1}]$ for some $x\in{RP(\Delta)}$, $y\in{RLP(\Delta)}$ and $\mathbf{r}(x) =\mathbf{r}(y)$. Then
$[t^{-1}]\theta = [(xy^{-1})^{-1}] = [yx^{-1}] \neq [0]$, which implies that $x \in RLP(\Gamma)$ by Theorem~\ref{w4} and so
	$$[qq^{-1}]=[tt^{-1}]\theta=[t]\theta[t^{-1}]\theta=[xy^{-1}][yx^{-1}]=[xx^{-1}]$$
since $\theta$ is an isomorphism of  right abundant semigroups. It follows that $q= x$. Furthermore,
	$$[\mathbf{r}(t)]\theta=[t^{-1}t]\theta=[yx^{-1}xy^{-1}]=[yq^{-1}qy^{-1}]=[yy^{-1}],$$
which implies that $y\in{T^0_\Delta}$, that is, $y = \mathbf{r}(y)$. Therefore
\[[t]\theta=[xy^{-1}]= [x\mathbf{r}(y)]= [x\mathbf{r}(x)]= [x]=[q]\]
 as $\mathbf{r}(x)=\mathbf{r}(y)$ and $x = q$, and hence
 \[[\mathbf{r}(t)]\theta = [yy^{-1}]=[\mathbf{r}(y)\mathbf{r}(y)^{-1}] =[\mathbf{r}(y)]= [\mathbf{r}(x)] = [\mathbf{r}(q)]= [\mathbf{r}([t]\theta)].\]

 Certainly we have $[\mathbf{r}(t)]\theta \subseteq \{[v]\theta: v \in \mathbf{r}(t)\}$. Conversely, for any $v \in \mathbf{r}(t)$, we have $[tv] \neq [0]$ and so as $\theta$ is a homomorphism, we have $[tv]\theta \neq [0]$. Then $[v]\theta \in \mathbf{r}(t)\theta$. Consequently, we have
 \[[\mathbf{r}(t)]\theta = \{[v]\theta: v \in \mathbf{r}(t)\} =  [\mathbf{r}([t]\theta)].\]
  Moreover we have $[\mathbf{s}(t)]\theta=[\mathbf{s}(x)]=[\mathbf{s}(q)]=[\mathbf{s}([t]\theta)] = \{[v]\theta: v \in \mathbf{s}(t)\} $.

 Since $\theta$ takes
$\{[tt^{-1}]: t \in {T_\Gamma}\}$ bijectively to $\{[qq^{-1}]: q \in {T_\Delta}\}$, it follows that $\theta$ takes $T_\Gamma$ bijectively to $T_\Delta$. Choose $\theta_{[V_{\Gamma}]}$ and $\theta_{[T_{\Gamma}]}$ to be the restriction of $\theta$ to  $[V_{\Gamma}]= \{[v]: v \in V_\Gamma\}$ and $[T_{\Gamma}] = \{[t]: t\in T_\Gamma\}$, respectively. Together with four bijective  maps:
 \[\theta_{V_{\Gamma}}: V_{\Gamma} \rightarrow [V_{\Gamma}],\   v\mapsto [v]\ \mbox{for\ all}\ v \in V_{\Gamma}\,\]
 \[\theta_{V_{\Delta}}: [V_{\Delta}] \rightarrow V_{\Delta},\  [v]\mapsto v \ \mbox{for\ all}\ v \in V_{\Delta}\,\]
  \[\theta_{T_{\Gamma}}: T_{\Gamma} \rightarrow [T_{\Gamma}],\   t\mapsto [t]\ \mbox{for\ all}\ t \in T_{\Gamma}\ \]
  and
 \[\theta_{T_{\Delta}}: [T_{\Delta}] \rightarrow T_{\Delta},\   [t]\mapsto t\ \mbox{for\ all}\ t \in T_{\Delta},\ \]
 we get  that  $\overline{\theta} = (\theta_{V_{\Gamma}}\theta_{[V_{\Gamma}]}\theta_{V_{\Delta}}, \theta_{T_{\Gamma}}\theta_{[T_{\Gamma}]}\theta_{T_{\Delta}})$ is an  isomorphism from $\Gamma$ to $\Delta$.
\end{proof}

\section{An example}\label{sec:ex}
In this section we give an example of $Q_{\Gamma}$ contains properly $S_{\Gamma}$ and $R_{\Gamma}$.

Let $\Gamma = (V, T)$ be a   network  as shown in the following, where
$$V = \{v_1, v_2, v_3, v_4\} \ \mbox{and}\ T= \{t_1, t_2\},$$
 where $t_1=(\{v_1,v_2\},\{v_3\})$ and $t_2 = (\{v_3\}, \{v_4\})$.

 \begin{center}
\begin{tikzpicture}[x=0.75pt,y=0.75pt,yscale=-1,xscale=1]

\draw  [fill={rgb, 255:red, 0; green, 0; blue, 0 }  ,fill opacity=1 ] (150,92.17) .. controls (150,90.95) and (150.99,89.96) .. (152.21,89.96) .. controls (153.43,89.96) and (154.42,90.95) .. (154.42,92.17) .. controls (154.42,93.39) and (153.43,94.37) .. (152.21,94.37) .. controls (150.99,94.37) and (150,93.39) .. (150,92.17) -- cycle ;
\draw  [fill={rgb, 255:red, 0; green, 0; blue, 0 }  ,fill opacity=1 ] (150,135.17) .. controls (150,133.95) and (150.99,132.96) .. (152.21,132.96) .. controls (153.43,132.96) and (154.42,133.95) .. (154.42,135.17) .. controls (154.42,136.39) and (153.43,137.37) .. (152.21,137.37) .. controls (150.99,137.37) and (150,136.39) .. (150,135.17) -- cycle ;
\draw    (178.33,113) -- (209.42,112.22) ;
\draw [shift={(211.42,112.17)}, rotate = 178.56] [color={rgb, 255:red, 0; green, 0; blue, 0 }  ][line width=0.75]    (10.93,-3.29) .. controls (6.95,-1.4) and (3.31,-0.3) .. (0,0) .. controls (3.31,0.3) and (6.95,1.4) .. (10.93,3.29)   ;
\draw  [fill={rgb, 255:red, 0; green, 0; blue, 0 }  ,fill opacity=1 ] (209.21,112.17) .. controls (209.21,110.95) and (210.2,109.96) .. (211.42,109.96) .. controls (212.64,109.96) and (213.63,110.95) .. (213.63,112.17) .. controls (213.63,113.39) and (212.64,114.37) .. (211.42,114.37) .. controls (210.2,114.37) and (209.21,113.39) .. (209.21,112.17) -- cycle ;
\draw  [fill={rgb, 255:red, 0; green, 0; blue, 0 }  ,fill opacity=1 ] (273.58,112.37) .. controls (273.58,111.16) and (274.57,110.17) .. (275.79,110.17) .. controls (277.01,110.17) and (278,111.16) .. (278,112.37) .. controls (278,113.59) and (277.01,114.58) .. (275.79,114.58) .. controls (274.57,114.58) and (273.58,113.59) .. (273.58,112.37) -- cycle ;
\draw    (211.42,112.17) -- (273.79,113.14) ;
\draw [shift={(275.79,113.17)}, rotate = 180.89] [color={rgb, 255:red, 0; green, 0; blue, 0 }  ][line width=0.75]    (10.93,-3.29) .. controls (6.95,-1.4) and (3.31,-0.3) .. (0,0) .. controls (3.31,0.3) and (6.95,1.4) .. (10.93,3.29)   ;
\draw    (152.21,94.37) -- (178.33,113) ;
\draw    (152.21,135.17) -- (178.33,113) ;

\draw (141,74) node [anchor=north west][inner sep=0.75pt]   [align=left] {$v_1$};
\draw (141,139) node [anchor=north west][inner sep=0.75pt]   [align=left] {$v_2$};
\draw (204,92) node [anchor=north west][inner sep=0.75pt]   [align=left] {$v_3$};
\draw (282,105) node [anchor=north west][inner sep=0.75pt]   [align=left] {$v_4$};
\end{tikzpicture}
 \end{center}
 According to the definition of $T^0$ in Section~\ref{subsec:HOLN}, we get
$$T^0=\{A\} \cup V,$$
where $A = \{v_1,v_2\}$.
Let $X=\{v_1, v_2, A\}$,
\[X_{v_1,v_2}= \{w \in X^+: v_1v_1,\ v_2v_2, AA, v_1v_2\ \mbox{and}\ v_2v_1 \ \mbox{are\ not \ subwords\ in}\ w\}\]
and
\[X_A= \{w \in X_{v_1, v_2}: w=\mu x, x\in X\setminus \{A\}\}.\]
 We  have
$$RP(\Gamma)=T \cup T^0 \cup X_{v_1,v_2} \cup  X_At_1 \cup X_At_1t_2 \cup \{t_1t_2\}$$
and
$$RLP(\Gamma)=T \cup T^0 \cup \{t_1t_2\}.$$

As every element of $Q_{\Gamma}$ has a uniquely right normal form representative, we list elements of $Q_{\Gamma}$ as follows:

\begin{align*}
	Q_\Gamma&=\{[\alpha \mathbf{r}(\alpha)]: \alpha \in RP(\Gamma)\}\cup \{[\mathbf{r}(\beta)\beta^{-1}]: \beta \in RLP(\Gamma)\}\\
	&\hspace{6mm}\cup \{[\alpha\beta^{-1}]: \alpha, \beta \in RLP(\Gamma)\ \mbox{and}\ \mathbf{r}(\alpha) = \mathbf{r}(\beta)\}\\
	&\hspace{6mm}\cup \{[\alpha t_1^{-1}]: \alpha \in X_AA \cup X_At_1\} \\
    &\hspace{6mm}\cup \{[\alpha t_2^{-1}]: \alpha \in X_At_1t_2\} \cup  \{[\alpha(t_1t_2)^{-1}]: \alpha \in X_At_1t_2\} \cup \{[0]\},
\end{align*}
\begin{align*}
	S_\Gamma&=\{[\mathbf{r}(\beta)\beta^{-1}]: \beta \in RLP(\Gamma)\setminus T^0\}\\
	&\hspace{6mm}\cup \{[\alpha\beta^{-1}]: \alpha \in RLP(\Gamma), \beta \in RLP(\Gamma)\setminus T^0\ \mbox{and}\ \mathbf{r}(\alpha) = \mathbf{r}(\beta)\}\\
	&\hspace{6mm}\cup \{[\alpha t_1^{-1}]: \alpha \in X_AA \cup X_At_1\} \\
    &\hspace{6mm}\cup \{[\alpha t_2^{-1}]: \alpha \in X_At_1t_2\} \cup  \{[\alpha(t_1t_2)^{-1}]: \alpha \in X_At_1t_2\} \cup \{[0]\},
\end{align*}

and
\begin{align*}
R_\Gamma&= \{[\alpha\beta^{-1}]: \alpha, \beta \in RLP(\Gamma)\setminus T^0\ \mbox{and}\ \mathbf{r}(\alpha) = \mathbf{r}(\beta)\} \cup \{[0]\}\\
&=\{[t_1t_1^{-1}],\ [t_2t_2^{-1}],\ [t_1t_2t_2^{-1}],\ [t_1t_2(t_1t_2)^{-1}],\ [0]\}.
\end{align*}

Clearly, $R_\Gamma\subset S_\Gamma \subset Q_\Gamma$.

Set
\begin{align*}
I_1 &= Q_{\Gamma}[t_2t_2^{-1}]Q_{\Gamma}=\{[\alpha\beta^{-1}]: \alpha \in RP(\Gamma), \beta \in RLP(\Gamma), \mathbf{r}(\alpha)=\mathbf{r}(\beta)=\mathbf{r}(t_2)\} \cup \{[0]\}\\
&= \{[\alpha\mathbf{r}(t_2)]: \alpha \in \{t_2, \mathbf{r}(t_2), t_1t_2\} \cup X_At_1t_2\} \cup \{[\mathbf{r}(t_2)\beta^{-1}]: \beta \in \{t_2, t_1t_2\}\}\\
&\hspace{6mm} \cup \{[\alpha\beta^{-1}]: \alpha, \beta \in \{t_2, t_1t_2\}\}\\
&\hspace{6mm} \cup \{[\alpha t_2^{-1}]: \alpha \in X_At_1t_2\} \cup  \{[\alpha(t_1t_2)^{-1}]: \alpha \in X_At_1t_2\} \cup \{[0]\},
\end{align*}

\begin{align*}
I_2 &= S^1_{\Gamma}[t_2t_2^{-1}]S^1_{\Gamma}=\{[\alpha\beta^{-1}]: \alpha \in RP(\Gamma), \beta \in RLP(\Gamma)\setminus T^0, \mathbf{r}(\alpha)=\mathbf{r}(\beta)=\mathbf{r}(t_2)\} \cup \{[0]\}\\
&= \{[\mathbf{r}(t_2)\beta^{-1}]: \beta \in \{t_2, t_1t_2\}\}\\
&\hspace{6mm} \cup \{[\alpha\beta^{-1}]: \alpha, \beta \in \{t_2, t_1t_2\}\}\\
&\hspace{6mm} \cup \{[\alpha t_2^{-1}]: \alpha \in X_At_1t_2\} \cup  \{[\alpha(t_1t_2)^{-1}]: \alpha \in X_At_1t_2\} \cup \{[0]\}
\end{align*}

 and
\begin{align*}
I_3&= R_{\Gamma}[t_2t_2^{-1}]R_{\Gamma}=\{[\alpha\beta^{-1}]: \alpha, \beta \in RLP(\Gamma)\setminus T^0, \mathbf{r}(\alpha)=\mathbf{r}(\beta)=\mathbf{r}(t_2)\} \cup \{[0]\}\\
&= \cup \{[\alpha\beta^{-1}]: \alpha, \beta \in \{t_2, t_1t_2\}\}\cup \{[0]\}.
\end{align*}
 Then by Lemma~\ref{*-ideal}, $I_1$ is a proper $*$-ideal of $Q_{\Gamma}$; by Lemma~\ref{*-ideal S}, $I_2$ is a proper $*$-ideal of $S_{\Gamma}$; by Lemma~\ref{*-ideal R}, $I_3$ is a proper ideal of $R_\Gamma$.

\section*{Declarations}
The authors are very grateful to Professor Victoria Gould  for her careful reading of the paper, and for her valuable suggestions, especially for rewriting systems. The research is supported by Natural Science Foundation of China (Grant No:11471255, 11501331). The first author was supported by the Cultivation Project of Young and Innovative Talents in Universities of Shandong Province.


\begin{thebibliography}{99}

\bibitem{3} Abrams, G., Aranda Pino, G.: The Leavitt path algebra of a graph. J Algebra {\bf 293} 319-334 (2005)

\bibitem{4} Ara, P., Moreno, M. A., Pardo, E.: Nonstable $K$-theory for graph algebras. Algebr Represent Theory,  {\bf 10} 157-178 (2007)

\bibitem{2} Ash, C. J., Hall,  T. E.: Inverse semigroups on graphs. Semigroup Forum {\bf 11} 140-145 (1975)
\bibitem{HN} Bianconi, G.: Higher-Order Networks: An Introduction to Simplicial Complex. Cambridge University Press, 2022

\bibitem{Bower}Bowers, P. M., Cokus, S. J., Eisenberg, D., et al.  Use of Logic Relationships to Decipher Protein Network Organization. Science {\bf 306} 2246-2249 (2004)

\bibitem{Baird} Baird, G.R.: Congruence-free inverse semigroups with zero. J. Austral. Math. Soc. {\bf 20} (Series A) 110-114 (1975).


\bibitem{Book} Book, R. V., Otto, F.: String-Rewriting Systems, Springer, 1993.



\bibitem{F} Fountain, J. B.: Adequate semigroups. Proceeding of the Edinburgh Mathematical Society {\bf 22(2)} 113-125 (1979)

\bibitem{Fountain} Fountain, J. B.:  Abundant semigroups. Proc  Lond  Math  Soc {\bf 44 } 103-129 (1982)

\bibitem{Fountain1} Fountain, J. B.: Free right $h$-adequate semigroups. Semigroups, theory and applications, Lecture Notes in Mathematics 1320, 97-120 (1988)

\bibitem{Howie} Howie, J. M.:  An Introduction to Semigroup Theory. Academic Press, London (1976)



\bibitem{Mitsch}H. Mitsch.: A natural partial order for semigroups. Proceedings of the American Mathematical Society {\bf 97(3)} 384-388 (1980)

\bibitem{Munn}W. D. Munn.: Fundamental inverse semigroups. Quart J Math Oxford {\bf 21} 157-170 (1970)


\bibitem{19} Nivat, M.,  Perrot, J. F.: Une generalisation du monoide bicclique. C R Acad Sci Paris, {\bf 271} 824-827 (1970)

\bibitem{21} Paterson,  A. L. T.: Graph inverse semigroups, groupoids and their $C^*$-algebras. J Oper Theory {\bf 48} 645-662 (2002)


\bibitem{Wang} Wang, Y. H., Meng, D. Z.: Quasi-semilattices on networks. Axioms {\bf 12} 943 (2023) https://doi.org/10.3390/axioms12100943.

\bibitem{YangG} Yang, D., Gould, V. A. R.: Free idempotent generated semigroups over bands and biordered sets with trivial products. International Journal of Algebra and Computation {\bf 26} 473-507 (2016)
\end{thebibliography}
\end{document}